\newcommand\dela[1]{}
\def\XXint#1#2#3{{\setbox0=\hbox{$#1{#2#3}{\int}$ }
\vcenter{\hbox{$#2#3$ }}\kern-.6\wd0}}
\newtheorem{theorem}{Theorem}
\numberwithin{equation}{section}
\numberwithin{theorem}{section}
\newtheorem{lemma}[theorem]{Lemma}
\newtheorem{cor}[theorem]{Corollary}
\newtheorem{definition}[theorem]{Definition}
\numberwithin{equation}{section}
\newcommand{\mf}{\mathbf}
\newcommand{\wt}{\widetilde}
\newcommand{\e}{\varepsilon}
\def\R{\mathbb{ R}}
\def\Z{\mathbb{ Z}}
\def\P{\mathbb{ P}}
\def\E{\mathbb{E}}
\def\P1{\mathbb{P}^{1}}
\def\o{\overline}
\begin{document}


\title{Homogenization of the stochastic Navier--Stokes equation 
with a stochastic slip boundary condition}

\author[Hakima  Bessaih]{Hakima Bessaih}
\address{University of Wyoming, Department of Mathematics, Dept. 3036, 1000
East University Avenue, Laramie WY 82071, United States}
\email{ bessaih@uwyo.edu}

\author[Florian  Maris]{ Florian  Maris}
\address{Numerical Porous Media SRI Center, CEMSE Division,
King Abdullah University of Science and Technology, 
Thuwal 23955-6900,
Kingdom of Saudi Arabia}
\email{florinmaris@gmail.com}

\maketitle


{\footnotesize
\begin{center}


\end{center}
}
\begin{abstract}
The two dimensional Navier-Stokes equation in a perforated domain with a dynamical slip boundary condition is considered. We assume that the dynamic is driven by a stochastic perturbation on the interior of the domain and another stochastic perturbation on the boundaries of the holes.  We consider a scaling ($\e^2$ for the viscosity and 1 for the density) that will lead to a time dependent limit problem. However, the noncritical scaling  ($\e^\beta,\quad \beta>1$) is considered in front of the nonlinear term.  The homogenized system in the limit is obtained as  a Darcy's law with memory with two permeabilities and an extra term that is due to the stochastic perturbation on the boundary of the holes. 
We use the two-scale convergence method.
Due to the stochastic integral, the pressure that appears in the variational formulation does not have enough regularity in time. This fact made us rely only on the variational formulation for the passage to the limit on the solution. We obtain a variational formulation for the limit that is solution of a Stokes system with two pressures.  This two-scale limit gives rise to three cell problems, two of them give the permeabilities while the third one  gives an extra term in the Darcy's law due to the stochastic perturbation on the boundary of the holes.
 
 \end{abstract}
{\bf Keywords:}  Homogenization, Boundary noise,  Navier-Stokes equations, Slip boundary condition,
Perforated medium.\\
\\
\\
{\bf Mathematics Subject Classification 2000}: Primary 60H15, 76M50, 60H30
; Secondary 76D07, 76M35.

\maketitle

\section{Introduction and formulation of the problem}
\label{section1}
In this paper we are interested in the asymptotic behavior of a two-dimensional  Navier Stokes equation 
in a domain containing periodically distributed obstacles subject to a dynamical slip boundary condition. We assume that the dynamic is driven by a stochastic perturbation on the interior of the domain and another stochastic perturbation on the boundaries of the obstacles. We represent the solid obstacles by holes in the fluid domain.

We consider $D$, an open bounded  domain of $\R^2$ with a Lipschitz boundary 
$\partial D$. Let $Y=[0, 1[ ^2$ the representative cell and denote by $O$ an open subset of $Y$ with a smooth boundary $\partial O$, such that $\o{O}\subset Y$, and set $Y^*=Y\setminus \o{O}$. The elementary cell $Y$ and the small cavity or hole $O$ inside it are used to model small scale obstacles or heterogeneities in a physical medium $D$. Denote by $O^{\e,k}$ the translation of $\e O$ by $\e k$, $k\in\Z^{2}$. We make the assumption that the holes do not intersect the boundary $\partial D$ and we denote by $\mathcal{K^\e}$ the set of all $k\in\Z^2$ such that the cell $\e k +\e Y$ is strictly included in $D$. The set of all such holes will be denoted by $O^{\e}$, i.e.
$$O^{\e}:=\bigcup_{k\in\mathcal{K^\e}}O^{\e,k}=\bigcup_{k\in\mathcal{K^\e}}\e(k+O),$$
and set
$$D^\e:=D\setminus \o{O^{\e}}.$$
By this construction, $D^\e$ is a periodically perforated domain with holes of size of the same order as the period. More preciselly, we are interested in the following system of equations:
\begin{equation}
\label{systemu}
\left\{
\begin{array}{rll}
d u^\e(t,x) =&\left [ \nu\e^2\Delta u^\e (t,x) + \e^\beta (u^\e(t,x)\cdot \nabla ) u^\e(t,x)- \nabla p^\e(t,x)\right ]dt\\
&+f(t,x) dt+g_1(t) d W_1(t,x) &\mbox{in}\ [0,T] \times D^\e, \\
\operatorname{div} u^\e(t,x) =&0&\mbox{in}\  [0,T]\times D^\e, \\
u^\e(t,x) \cdot n=&0&\mbox{on}\ [0,T]\times\partial O^\e, \\
\e d u^\e_\tau(t,x) =&-\left[\nu\e^2 \left( \dfrac{\partial u^\e(t,x)}{\partial n}\right)_\tau  + \alpha^\e(x) u^\e_\tau(t,x)\right] dt+\e \left[g_2^\e(t)\right]_\tau d W_2(t,x)&\mbox{on}\  [0,T]\times\partial O^\e, \\
u^\e(0,x)=&u_0^\e(x)&\mbox{in}\   D^\e, \\
u^\e(0,x)=&v_0^\e (x)&\mbox{on}\  \partial O^\e, \\
\end{array}
\right.
\end{equation}
where $p^\e$ is the pressure of the fluid, $\nu > 0$ is the viscosity, $u^\e$ is the velocity, $n$ is the normal vector at the boundary $\partial O^\e$ that points inside the hole, $u^\e_\tau$ is the tangential velocity on $\partial O^\e$, and $\left( \dfrac{\partial u^\e(t,x)}{\partial n}\right)_\tau$ is the tangential component of the normal derivative of the velocity.

Here $(W_1(t))_{t\geq 0}$ and $(W_2(t))_{t\geq 0}$ are two mutually independent 
$L^2(D)^2$-- valued Wiener processes defined on the complete probability space $(\Omega,\mathcal{F}, \mathbb{P})$ endowed with the canonical filtration $(\mathcal{F}_t)_{t\geq 0}$ and with the covariances $Q_1$ and $Q_2$, where $Q_1$ and $Q_2$ are two linear positive operators in  
$L^2(D)^2$ of trace class.
Also $f\in L^2([0,T]\times D)^2$ represents the body forces, $\alpha^\e$ is a strictely positive function in $L^\infty (\partial O^\e)$ and $\beta >1$. For any $K$ and $H$ two separable Hilbert spaces, we denote by $L_2(K,H)$ the space of bounded linear operators that are Hilbert-Schmidt from $K$ to $H$. If $Q$ is a linear positive operator in $K$ of trace class, then we denote by $L_Q(K,H)$, the space of bounded linear operators that are Hilbert-Schmidt from $Q^{\frac{1}{2}} K$ to $H$, and the norm is denoted by $\|\cdot\|_Q$. Finally $g_1\in C([0,T]; L_{Q_{1}}(L^2(D)^2, L^2(D)^2)$ and $g_2^\e\in C([0,T]; L_{Q_{2}}(L^2(D)^2, L^2(\partial O^\e)^2)$, with 
\begin{equation}
\label{defg2e}
g_2^\e(t)=g_{21}(t)+\mathcal{R}^\e g_{22}(t),
\end{equation}
where $g_{21}\in C([0,T]; L_{Q_{2}}(L^2(D)^2,H^1(D)^2)$, $g_{22}\in C([0,T]; L_{Q_{2}}(L^2(D)^2,L^2(\partial O)^2)$. For any element $h \in L^2(\partial O)$, we define $\mathcal{R}^\e h \in L^2(\partial O^\e)$ by 
\begin{equation}
\label{defRe}
\mathcal{R}^\e h(x) = h\left(\dfrac{x}{\e}\right)
\end{equation}
where $h$ is considered $Y-$ periodic.

With a similar scaling, but in a deterministic and linear setting, and with Dirichlet boundary conditions on the boundary of the holes, Allaire in \cite{A91} proved rigorously the convergence of the homogenization process to a Darcy's law with memory. Let us notice that the scaling $\e^2$ for the viscosity is the precise scaling that gives a non-zero limit for $u^\e$ when $\e\to 0$ for this particular geometry. Moreover, the scaling $1$ for the density of the fluid is the precise one that gives a time-dependent limit problem. Indeed, Mikeli{\'c} in \cite{Mik91} studied the deterministic Navier Stokes equation, with Dirichlet boundary condition on the boundaries of the holes, but with a scaling $\e$ for the density, and obtained in the limit a stationary Darcy's law. The Stokes equation in a perforated domain, with a slip boundary condition has been studied by Allaire in \cite{A95} where Darcy's law was obtained in the limit with a permeability that depends on the slip coefficient $\alpha$. Similar results can be found in \cite{CDE96} with a nonhomogeneous slip boundary condition, that translates in an additional term in the Darcy's law. The stochastic Stokes with a stochastic dynamical Fourier boundary condition was analyzed in 
\cite{maris}, where in the limit, a  stochastic parabolic partial differential equation was obtained. In the present paper we analyze the full stochastic Navier Stokes equation, but with a noncritical scaling $\e^\beta$, with $\beta>1$ in front of the nonlinear term. Our dynamical boundary condition is of slip type, and contains a scaling of $\e$, which is the precise one that gives in the limit a Darcy's law with memory with a second permeability. The nonhomogeinity on the boundary contains a periodic stochastic part, that yields in the limit to an additional term in the Darcy's law.

To prove the convergence to the homogenized problem, we use the  two-scale convergence method that goes back to \cite{N89,A-2s} and was extended to the stochastic case in \cite{BMW94}. Because our uniform estimates are in mean due to the presence of the random coefficient $\omega$, the two-scale 
convergence method has to be adapted to our setting. More precisely, we extend the notion of two-scale convergence to contain more parameters, $\omega$, $t$ and $x$. This is very similar to what we  previously introduced in our paper \cite{maris}.
 We extend by $0$ inside the holes the velocities $u^\e$ and $\nabla u^\e$ and we pass to the limit in the two scale convergence sense in the variational formulation. We obtain a variational formulation for the limit $u^*$ which  is associated with a stochastic Stokes system with two pressures. The existence of the two pressures was proven by the orthogonality Lemma \ref{ortog2}. This two scale limit gives rise to three cell problems, two of them give the permeabilities while the third cell problem gives  an extra term in the Darcy's law. Due to the stochastic integral, we have to define a more regular in time pressure $P(t)$ that appears in the variational formulation. For this reason, in order to show Darcy's law we had to rely only on the variational formulation, and this is proven in Theorem \ref{thform}. 
 
The idea of the two scale convergence method was to give a rigorous justification to the asymptotic expansion method. A more general setting has been defined by Nguetseng in \cite{N03}, \cite{N04} and later in \cite{NSW10}. The theory of the two scale convergence from the periodic to the stochastic setting has been extended by Bourgeat, A. Mikeli{\'c} and Wright in \cite{BMW94}, using techniques from ergodic theory. There is a vast literature for partial differential equations with random coefficients, where this method was used, however most of the tackled problems in this setting are not in perforated domains, (see \cite{BMW94}, \cite{BLL06} and the references therein). Much less was done for homogenization of stochastic partial differential equations, in particular in perforated domains. We mention the paper \cite{WD07} where a reaction diffusion equation with a dynamical boundary condition with a noise source term on both the interior of the domain and on the boundary was studied, and through a tightness argument and a pointwise two scale convergence method the homogenized equation was derived. A comprehensive theory for solving stochastic homogenization problems has been constructed recently in \cite{JL13} and \cite{RSW12}, where a $\Sigma$-convergence method adapted to stochastic processes was developed. An application of the method to the homogenization of a stochastic Navier-Stokes type equation with oscillating coefficients in a bounded domain (without holes) has been provided.

For the deterministic Stokes or Navier Stokes equations in perforated domains we refer to: \cite{SP80}, \cite{T80}, \cite {A92}, \cite{Mik91}, \cite{CDE96}, \cite{A91}, \cite{Mik95}. In \cite{CDE96} the Stokes problem in a perforated domain with a nonhomogeneous Fourier boundary condition on the boundaries of the holes was studied while in \cite{A91} the same problem was studied with a slip boundary condition. The stochastic Stokes in a perforated domain has been studied first in our previous  paper in \cite{maris}. As far as we know, this is the first result on the stochastic Navier-Stokes in perforated domain, although we are studying the noncritical case.

The paper is organized as follows. In Section \ref{section2} we define the functional spaces and rewrite the system in an abstract form. In Section \ref{section3} we study the microscopic equation, we prove the existence and uniqueness of solutions, and get the uniform estimates. In Section \ref{section4} the two scale convergence results are introduced and adapted to our particular setting. In Section \ref{section5},  we derive all the two-scale limits and pass to the limit in the variational formulation using particular test functions.   We obtain the variational formulation for the two scale limit $u^*$. The homogenized system is studied in section 6 and an explicit formula in the form of a Darcy's law with memory is given in theorem \ref{thform}. Section 7 is dedicated to some concluding remarks about Darcy's law obtained in the previous section.

\section{Functional setting}
\label{section2}
Let us introduce the following Hilbert spaces

\begin{equation}
\label{spaceLe}
\mf{L}^2_\e:=L^2(D^\e)^2\times L^2(\partial O^\e)^2,
\end{equation}

\begin{equation}
\label{spaceHe}
\mf{H}^1_\e:=H^1(D^\e)^2\times H^{\frac{1}{2}}(\partial O^\e)^2.
\end{equation}
equipped respectively with the inner products

\[
  \langle \mf{U},\mf{V}\rangle=\int_{D^\e} \big[  u(x) \cdot v(x)\big] dx 
  +\int_{\partial O^{\e}} \big[  \o{u}(x') \cdot \o{v}(x')\big] d\sigma(x'),
\]
and 
\[
 ((\mf{U},\mf{V})) = \int_{D^\e} \big[  u(x) \cdot v(x)\big] dx + \int_{D^\e} \big[ \nabla u(x) \cdot\nabla v(x)\big] dx,\]
where $\mf{U}=\left(\begin{array}{c} 
u\\
\o{u}
\end{array}\right)$.

We introduce the bounded linear and surjective operator
$\gamma^\e:H^1(D^\e)^2\mapsto H^{\frac{1}{2}}(\partial O^\e)^2$ such that
$\gamma^{\e}u=u|_{\partial O^\e}$  for all $u\in C^\infty\left(\overline{D^\e}\right)^2$. $\gamma^\e$ is the  trace operator, (see \cite{sohr}, pp. 47). We denote by $H^{-\frac{1}{2}}(\partial O^\e)^2$ the dual space of 
$H^{\frac{1}{2}}(\partial O^\e)^2$.

We denote by $\mf{H}^\e$ the closure of $\mathcal{V}^\e$ in $\mf{L}^2_\e$, and by $\mf{V}^\e$ the closure of $\mathcal{V}^\e$ in $\mf{H}^1_\e$,  
where 
\begin{equation}
\label{spaceVe}
\begin{split}
\mathcal{V}^\e:=\left\{\mf{U}=\left(\begin{array}{c} 
u\\
\o{u}
\end{array}\right)
\in C^\infty\left(\overline{D^\e}\right)^2\times \gamma^\e(C^\infty\left(\overline{D^\e}\right)^2)\ \ |\ \ \mbox{div }u=0,\ \o{u}=\sqrt{\e} u_{\partial O^\e},\right.\\
\left. u=0 \mbox{ on } \partial D, \overline{u}\cdot n =0 \mbox{ on } \partial O^\e\right\},
\end{split}
\end{equation}

Let $\Pi^\e: \mf{L}^2_\e \mapsto L^2(D^\e)^2$ be the operator that represents the projection onto the first component, i.e. $\Pi^\e \mf{U} =u$, for every $\mf{U}=(u,\o{u})\in \mathbf{L}^2_\e$. We also define the spaces $H^\e=\Pi^\e \mf{H}^\e$ and $V^\e=\Pi^\e\mf{V}^\e$.

$\mf{H}^\e$ and $\mf{V}^\e$ are separable Hilbert spaces with the inner products and norms inherited from $\mf{L}^2_\e$ and $\mf{H}^1_\e$ respectively:
\[
\|\mf{U}\|^2_{\mf{H}^\e}=\langle \mf{U},\mf{U}\rangle\, , 
\]
\[
 \|\mf{U}\|_{\mf{V}^\e}^2 = ((\mf{U},\mf{U})) \, , 
\]
and $H^\e$ and $V^\e$ are also separable Hilbert spaces with the norms induced by the projection $\Pi^\e$.

Denoting by $(\mf{H}^\e )'$ and $(\mf{V}^\e )'$ the dual spaces, if we identify  $\mf{H}^\e$ with  $(\mf{H}^\e) '$
then we have the Gelfand triple $\mf{V}^\e\subset \mf{H}^\e \subset (\mf{V}^\e)'$ with continuous injections. 

We denote the dual pairing between $\mf{U}\in \mf{V}^\e$ and $\mf{V}\in (\mf{V}^\e)'$ by
$\langle  \mf{U},\mf{V}\rangle_{\langle(\mf{V}^\e)',\mf{V}^\e\rangle}$.
When $\mf{U}\in \mf{H}^\e$, we have $\langle \mf{U},\mf{V}\rangle_{\langle(\mf{V}^\e)',\mf{V}^\e\rangle}=\langle \mf{U},\mf{V}\rangle$.
Define the linear operator $\mf{A}^\e : D(\mf{A}^\e) \subset \mf{H}^\e\mapsto (\mf{H}^\e)'$:

\begin{equation}
\label{defAe}
\mf{A}^\e \mf{U}=\mf{A}^\e \left( \begin{array}{c}
u\\
\o{u}
\end{array}\right)
=
\operatorname{Proj}_{\mf{H}^{\e}}\left(\begin{array}{c}
-\nu  \Delta u\\
\dfrac{\nu} {\sqrt{\e}}  \left(\dfrac{\partial u}{\partial n}\right)_\tau+\dfrac{\alpha^\e}{\e^3}\o{u}_\tau
\end{array}\right),
\end{equation}
with
$$D(\mf{A}^\e)=\{ \mf{U}\in \mf{V}^\e  | -\Delta u \in L^2(D^\e)^2\ \mbox{ and } \left(\dfrac{\partial u}{\partial n}\right)_\tau \in L^2(\partial O^\e)^2\},$$
and
$$\langle\mf{A}^\e \mf{U} , \mf{V}\rangle=\displaystyle\int_{D^\e} \nu \nabla u \nabla v dx+\int_{\partial O^\e} \frac{\alpha^\e}{\e^3} \overline{u}_\tau \overline{v}_\tau d \sigma.$$

\begin{lemma}\label{lemmaAe} (Properties of the operator $\mathbf{A}^\e$) For every $\e > 0$, the linear operator $\mf{A}^\e$ is positive and self-adjoint in $\mf{H}^\e$.
\end{lemma}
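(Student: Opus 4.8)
The plan is to realize $\mf{A}^\e$ as the operator canonically associated with the symmetric bilinear form
$$a^\e(\mf{U},\mf{V}) := \int_{D^\e}\nu\,\nabla u\cdot\nabla v\,dx + \int_{\partial O^\e}\frac{\alpha^\e}{\e^3}\,\o{u}_\tau\cdot\o{v}_\tau\,d\sigma,\qquad \mf{U}=(u,\o u),\ \mf{V}=(v,\o v)\in\mf{V}^\e,$$
and then to invoke the first representation theorem for closed symmetric forms. First I would record the three properties of $a^\e$ on $\mf{V}^\e$: it is manifestly symmetric; it is continuous, by Cauchy--Schwarz together with the boundedness of the trace operator $\gamma^\e$ and the identity $\o u=\sqrt\e\,u_{\partial O^\e}$ built into the definition of $\mathcal V^\e$; and it is coercive. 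Coercivity is where the structure of $\mf{V}^\e$ enters: since $\nu>0$ and $\alpha^\e>0$ one has $a^\e(\mf{U},\mf{U})\ge\nu\|\nabla u\|_{L^2(D^\e)}^2$, and because every $u$ coming from $\mf{U}\in\mf{V}^\e$ vanishes on $\partial D$, the Poincaré inequality (with a constant depending on the fixed $\e$) yields $\|\nabla u\|_{L^2(D^\e)}^2\ge c_\e\|\mf{U}\|_{\mf{V}^\e}^2$. In particular $a^\e(\mf{U},\mf{U})>0$ for $\mf{U}\ne0$, so the form is strictly positive.

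The technical heart is to check that the operator defined through the PDE expression in \eqref{defAe}, restricted to $D(\mf{A}^\e)$, is exactly the operator produced by this form, i.e. that
$$\langle\mf{A}^\e\mf{U},\mf{V}\rangle=a^\e(\mf{U},\mf{V})\qquad\text{for all }\mf{U}\in D(\mf{A}^\e),\ \mf{V}\in\mf{V}^\e.$$
Since $\mf{V}\in\mf{V}^\e\subset\mf{H}^\e$, the projection $\operatorname{Proj}_{\mf{H}^\e}$ may be dropped when pairing, so the left-hand side equals $\int_{D^\e}(-\nu\Delta u)\cdot v\,dx+\int_{\partial O^\e}[\tfrac{\nu}{\sqrt\e}(\partial_n u)_\tau+\tfrac{\alpha^\e}{\e^3}\o u_\tau]\cdot\o v\,d\sigma$. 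I would then apply the vector Green identity on $D^\e$ (legitimate precisely because $-\Delta u\in L^2$ and $(\partial_n u)_\tau\in L^2$ on $D(\mf{A}^\e)$), using that $v=0$ on $\partial D$, that both traces have vanishing normal component $\o u\cdot n=\o v\cdot n=0$, and the scaling $v_{\partial O^\e}=\o v_\tau/\sqrt\e$ to rewrite the interior term. The boundary contribution $-\int_{\partial O^\e}\nu(\partial_n u)_\tau\cdot v_\tau\,d\sigma$ produced by Green's formula then cancels exactly the term $\int_{\partial O^\e}\tfrac{\nu}{\sqrt\e}(\partial_n u)_\tau\cdot\o v_\tau\,d\sigma$, leaving precisely $a^\e(\mf{U},\mf{V})$; the pressure/projection contribution disappears automatically because $\mf{V}$ is divergence free.

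With the identification in hand, self-adjointness and positivity follow from the standard form machinery: $a^\e$ is densely defined, symmetric, nonnegative, and closed (its form norm is equivalent to the complete $\mf{V}^\e$-norm by continuity and coercivity), so by the first representation theorem it determines a unique positive self-adjoint operator $B$ with $\langle B\mf{U},\mf{V}\rangle=a^\e(\mf{U},\mf{V})$; the computation above shows $\mf{A}^\e$ coincides with $B$ on $D(\mf{A}^\e)$, and checking $D(\mf{A}^\e)=D(B)$ finishes the proof. If one prefers an elementary route, symmetry of $\mf{A}^\e$ is immediate from that of $a^\e$, and self-adjointness then follows by showing $\operatorname{Ran}(\mf{A}^\e)=\mf{H}^\e$: given $F\in\mf{H}^\e$, Lax--Milgram applied to the coercive form $a^\e$ yields $\mf{U}\in\mf{V}^\e$ with $a^\e(\mf{U},\cdot)=\langle F,\cdot\rangle$, and a de Rham / elliptic regularity argument recovers the pressure and shows $\mf{U}\in D(\mf{A}^\e)$ with $\mf{A}^\e\mf{U}=F$. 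I expect this last regularity identification --- confirming that the weak solution produced by Lax--Milgram genuinely lies in $D(\mf{A}^\e)$, so that the abstract and PDE definitions of $\mf{A}^\e$ agree --- to be the main point requiring care.
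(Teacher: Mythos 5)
Your proposal is correct and takes essentially the same route as the paper: both realize $\mf{A}^\e$ as the operator associated with the symmetric coercive bilinear form $a^\e$, identify $D(\mf{A}^\e)$ with the canonical form domain via Green's identity (the paper's displayed equivalence showing that $\langle\mf{A}^\e\mf{U},\mf{V}\rangle\le C\|\mf{V}\|_{\mf{H}^\e}$ for all $\mf{V}\in\mf{V}^\e$ holds iff $\Delta u\in L^2(D^\e)^2$ and $(\partial u/\partial n)_\tau\in L^2(\partial O^\e)^2$), and conclude self-adjointness from the standard form-representation machinery, the paper citing Proposition A.10 of Da Prato--Zabczyk where you cite Kato's first representation theorem. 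Your alternative Lax--Milgram/surjectivity route is extra and not needed, since the domain identification you already carry out is exactly the step the paper performs.
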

\begin{proof}
The operator is obviously symmetric, since for every $\mf{U},\ \mf{V} \in \mf{V}^\e$,
$$\langle \mf{A}^\e \mf{U}, \mf{V}\rangle =\nu \displaystyle\int_{D^\e} \nabla u \nabla v dx+\int_{\partial O^\e} \frac{\alpha^\e}{\e^2} \gamma^\e(u)\gamma^\e (v) d \sigma$$
and also coercive by using the strict positivity of $\alpha^\e$ and the Poincare inequality
$$\langle \mf{A}^\e \mf{U}, \mf{U}\rangle = \nu \displaystyle\int_{D^\e} \nabla u \nabla u dx+\int_{\partial O^\e} \frac{\alpha^\e}{\e^2} \gamma^\e(u)\gamma^\e (u) d \sigma \geq c(\e) ||\mf{U}||^2_{\mf{V}^\e}.$$

To show that it is self-adjoint it is enough to show that 
$$D(\mf{A}^\e) = \{\mf{U} \in \mf{V}^\e\ |\  \langle \mf{A}^\e \mf{U} ,\mf{V} \rangle \leq C ||\mf{V}||_{\mf{H}^\e} \mbox{ for every } \mf{V} \in \mf{V}^\e\}.$$

But, $\langle \mf{A}^\e \mf{U}, \mf{V}\rangle \leq C ||\mf{V}||_{\mf{H}^\e}$ for every $\mf{V} \in \mf{V}^\e$ is equivalent to
\begin{equation} 
\begin{split}
\nu \displaystyle\int_{D^\e} \nabla u \nabla v dx \leq C||v||_{L^2(D^\e)^2} + C ||v||_{L^2(\partial O^\e)^2}\ \forall v\in V^\e \Longleftrightarrow\\
\nu \displaystyle\int_{D^\e} -\Delta u  v dx + \displaystyle \nu\int_{\partial O^\e} \dfrac{\partial u}{\partial n} v d\sigma \leq C||v||_{L^2(D^\e)^2} + C ||v||_{L^2(\partial O^\e)^2}\ \forall v\in V^\e \Longleftrightarrow\\
\end{split}
\end{equation}
$\Delta u \in L^2(D^\e)^2$ and $\dfrac{\partial u}{\partial n} \in L^2(\partial O^\e)^2$, so $\mf{U} \in D(\mf{A}^\e)$.
Now we use Proposition A.10, page 389 from \cite{DPZ} to infer that $\mf{A}^\e$ is self-adjoint and generates an analytic  semigroup.
\end{proof}
By continuity, the operator $\mf{A}^{\e}$ can be extended from $\mf{V}^\e$ into $( \mf{V}^\e)'$. 

Denote  by $S_{\e}(t)$  the analytic semigroup generated by $\e^2\mf{A}^{\e}$ (see \cite{pazy}).

Let $\mf{B}^\e: \mf{V}^\e \times \mf{V}^\e \to (\mf{V}^\e)'$, and $B^\e: V^\e \times V^\e \to (V^\e)'$ be defined by
\begin{equation}
\label{defBe}
\mf{B}^\e \left(\mf{U},\mf{V}\right)=
\left(
\begin{array}{c}
B^\e(u,v)\\
0
\end{array}
\right)
=
\left(
\begin{array}{c}
(u\cdot \nabla) v\\
0
\end{array}
\right).
\end{equation}

\begin{lemma}\label{propBe}(cf. \cite{temam})
  Let $u,v, z\in V^\e$. Then
  \begin{equation} \label{prop1Be} \langle B^\e(u,v),z\rangle_{(V^\e)',V^\e}=- \langle
    B^\e(u,z),v\rangle_{(V^\e)',V^\e} \quad \mbox{\rm and}\quad \langle
    B^\e(u,v),v\rangle_{(V^\e)',V^\e} =0.
  \end{equation}
  Furthermore,
  \begin{equation}\label{prop2Be}
    |\langle B^\e(u,v), z\rangle_{(V^\e)',V^\e}|
		\le \|u\|_{L^{4}(D^\e)^2}\|v\|_{V^\e}\|z\|_{L^{4}(D^\e)^2}.
  \end{equation}
\end{lemma}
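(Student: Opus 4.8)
The plan is to pass to the associated trilinear form
$$b^\e(u,v,z) := \langle B^\e(u,v),z\rangle_{(V^\e)',V^\e} = \int_{D^\e} \big((u\cdot\nabla)v\big)\cdot z\,dx = \sum_{i,j}\int_{D^\e} u_i\,\partial_i v_j\,z_j\,dx,$$
and to prove both assertions first for smooth fields, namely the first components of elements of $\mathcal{V}^\e$, and then extend to all of $V^\e$ by the continuity estimate \eqref{prop2Be} together with the density of these smooth fields in $V^\e$. In two dimensions $H^1(D^\e)\hookrightarrow L^4(D^\e)$, so $u,z\in L^4$ and $\partial_i v_j\in L^2$; this makes $b^\e$ well defined on $V^\e\times V^\e\times V^\e$ and will also legitimize the density extension.

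For the antisymmetry \eqref{prop1Be}, I would add the two forms and recognize a total derivative,
$$b^\e(u,v,z) + b^\e(u,z,v) = \sum_{i,j}\int_{D^\e} u_i\,\partial_i(v_j z_j)\,dx = \int_{D^\e} (u\cdot\nabla)(v\cdot z)\,dx,$$
then integrate by parts. The volume contribution $-\int_{D^\e}(\operatorname{div}u)(v\cdot z)\,dx$ vanishes because $\operatorname{div}u=0$, leaving only $\int_{\partial D^\e}(u\cdot n)(v\cdot z)\,d\sigma$ over $\partial D^\e=\partial D\cup\partial O^\e$. The crux is that this boundary integral vanishes: on $\partial D$ the defining condition $u=0$ of $\mathcal{V}^\e$ applies, while on $\partial O^\e$ the slip condition $\o{u}\cdot n=0$ combined with $\o{u}=\sqrt{\e}\,u|_{\partial O^\e}$ forces $u\cdot n=0$. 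Hence $b^\e(u,v,z)=-b^\e(u,z,v)$, and taking $z=v$ gives $2b^\e(u,v,v)=0$, i.e. $b^\e(u,v,v)=0$.

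For the bound \eqref{prop2Be}, I would apply H\"older's inequality with exponents $(4,2,4)$,
$$|b^\e(u,v,z)| \le \int_{D^\e} |u|\,|\nabla v|\,|z|\,dx \le \|u\|_{L^4(D^\e)^2}\,\|\nabla v\|_{L^2(D^\e)}\,\|z\|_{L^4(D^\e)^2},$$
and conclude using $\|\nabla v\|_{L^2(D^\e)}\le\|v\|_{V^\e}$, which holds because the $\mf{V}^\e$-norm dominates the $L^2$-norm of the gradient of the first component.

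The main obstacle I expect is the boundary analysis underlying \eqref{prop1Be}: one must justify the integration by parts for fields that are only $H^1$ and verify that no spurious contribution survives on either piece of $\partial D^\e$, paying attention to the scaling relation $\o{u}=\sqrt{\e}\,u|_{\partial O^\e}$ encoded in $\mathcal{V}^\e$ and to the fact that the holes, though of size $\e$, stay uniformly away from $\partial D$. Once the identities are secured on smooth fields, the extension to general elements of $V^\e$ is routine via \eqref{prop2Be} and density.
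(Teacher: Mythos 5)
Your proof is correct and matches the intended argument: the paper states Lemma \ref{propBe} without proof, simply citing \cite{temam}, and your route --- antisymmetry of the trilinear form by integration by parts using $\operatorname{div}u=0$, $u=0$ on $\partial D$, and $u\cdot n=0$ on $\partial O^\e$ (correctly extracted from $\o{u}\cdot n=0$ together with $\o{u}=\sqrt{\e}\,u|_{\partial O^\e}$ in the definition of $\mathcal{V}^\e$), then H\"older with exponents $(4,2,4)$, and finally extension from smooth fields by density of $\mathcal{V}^\e$ in $V^\e$ --- is exactly the standard argument from that reference, adapted soundly to the slip boundary condition. You also correctly note the one point specific to this paper's setup, namely that the $\mf{V}^\e$-inner product involves only the first component, so $\|\nabla v\|_{L^2(D^\e)}\le\|v\|_{V^\e}$ and the stated bound \eqref{prop2Be} follows.
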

Moreover, one can apply the two-dimensional Ladyzhenskaya
interpolation inequality (cf. \cite{constantin1988})
  \begin{equation}\label{inter}
    \|u\|^{2}_{L^{4}(D^\e)^2}\leq C_{\e}\|u\|_{H^\e}\|u\|_{V^\e},
  \end{equation}
  to the right-hand side of \eqref{prop2Be} to obtain
\begin{equation}\label{prop3Be}
|\langle B^\e(u,v), z\rangle_{(V^\e)',V^\e}| \leq C_{\e}
\|u\|_{H^\e}^{1/2}\|u\|_{V^\e}^{1/2} \|v\|_{V^\e}\|z\|_{H^\e}^{1/2}\|z\|_{V^\e}^{1/2}.
\end{equation}

If we define $\mathbf{F}^\e \in L^2(0,T;L^2(D^\e)^2\times L^2(\partial O^\e))$ by
\begin{equation}
\label{defFe}
\mathbf{F}^\e(t,x)=\left(\begin{array}{c}
f(t,x)\\
0
\end{array}\right),
\end{equation}
and

\begin{equation}
\label{defGe}
\mathbf{G}^\e(t)=\left(\begin{array}{cc}
g_1(t) & 0\\
0 & \e \left[g_{2}^\e(t)\right]_\tau
\end{array}\right), \qquad  \mathbf{W}(t)=(W_{1}(t), W_{2}(t)),
\end{equation}
we can rewrite the system \eqref{systemu} in the compact form

\begin{equation}
\label{systemU}
\left\{
\begin{array}{rll}
d \mf{U}^\e(t) &+\e^2\mathbf{A}^\e \mf{U}^\e (t) dt + \e^\beta \mf{B}^\e(\mf{U}^\e(t),\mf{U}^\e(t)) dt=\mathbf{F}(t) dt+\mathbf{G}^\e(t) d \mathbf{W}, \\

\mf{U}^\e(0)&=\mf{U}_0^\e=\left( \begin{array}{c}
u_0^\e\\
\overline{u_{0}^{\e}}=\sqrt{\e} v_0^\e
\end{array}
\right).
\end{array}
\right.
\end{equation}


We assume that the operators $g_1$, $g_{21}$, and $g_{22}$ satisfy the properties:

\begin{equation}
\label{eqCT}
\begin{split}
||g_1(t)||^2_{Q_1} := 
\sum_{j=1}^\infty \lambda_{j1}||g_1(t) e_{j1}||^2_{L^2(D)^2} \leq C_T, \ t\in [0,T],\\
||g_{21}(t)||^2_{Q_2} := 
\sum_{j=1}^\infty \lambda_{j2}||g_{21}(t) e_{j2}||^2_{H^1(D)^2} \leq C_T, \ t\in [0,T],\\
||g_{22}(t)||^2_{Q_2} := 
\sum_{j=1}^\infty \lambda_{j2} ||g_{22}(t) e_{j2}||^2_{L^2(\partial O)^2} \leq C_T, \ t\in [0,T],\\
\end{split}
\end{equation}
where $\{e_{j1}\}_{j=1}^\infty$ and $\{e_{j2}\}_{j=1}^\infty$ are respectively the eigenvectors of  $Q_1$ and $Q_2$, and $\{\lambda_{j1}\}_{j=1}^\infty$ and $\{\lambda_{j2}\}_{j=1}^\infty$ are the corresponding sequences of eigenvalues.

Moreover, throughout the paper we will assume that 
$\mf{U}_{0}^{\e}= (u^{\e}_{0},\overline{u^{\e}_{0}}=\sqrt{\e} v_0^\e)$ is an $\mathcal{F}_{0}-$ measurable $\mf{V}^\e-$ valued random variable and there exists a constant $C$ independent of $\e$, such that for every $\e > 0$:

\begin{equation}\label{u0}
\mathbb{E}\|u^{\e}_{0}\|_{L^2(D^\e)}^2+\e^2\mathbb{E}\|\nabla u_0^\e\|_{L^{2}(\partial O^\e)}^2\leq C.
\end{equation}

For any function $z^\e$ defined in $D^\e$ we will denote in this paper by $\wt{z}^{\e}$ the extension of $z^{\e}$ by $0$ to $D$,
\begin{equation}
\wt{z}^{\e}:=\left\{\begin{array}{lr}
z^{\e} &{\rm on}\ D^\e\\
0 & {\rm on} \ D\setminus D^\e.
\end{array}
\right.
\end{equation} 
\section{The microscopic model}
\label{section3}

\subsection{Auxiliary Stokes type problem}
\label{subsection21}

We introduce the associated linear Ornstein--Uhlenbeck process:

\begin{equation}
\label{systemZ}
\left\{
\begin{array}{rll}
d \mf{Z}^\e(t) &+\e^2\mathbf{A}^\e \mf{U}^\e (t) dt=\mathbf{G}^\e(t) d \mathbf{W} \\
\\
\mf{Z}^\e(0)&=\mf{0}.
\end{array}
\right.
\end{equation}

\begin{theorem}\label{exZ}
For any $T>0$, the system \eqref{systemZ} has a unique mild solution 
$\mf{Z}^\e\in L^{2}(\Omega, C([0,T], \mf{H}^\e)\cap L^{2}(0,T; \mf{V}^\e))$, 

\begin{equation}\label{mildZ} 
\mf{Z}^\e(t)=\int_{0}^{t}S_{\e}(t-s)\mathbf{G}^\e(s)d\mathbf{W}, \quad t\in [0,T].
\end{equation}
The mild solution $\mf{Z}^\e$ is also a weak solution, that is,  $\mathbb{P}$-a.s.

\begin{equation}\label{varZ}
\langle \mf{Z}^\e(t), \bm{\phi}\rangle+\e^2\int_{0}^{t}\langle (\mf{A}^\e)^{1/2} \mf{Z}^\e(s),(\mf{A}^\e)^{1/2}\bm{\phi}\rangle ds
=\int_{0}^{t}\langle \mf{G}^\e(s)d\mf{W}(s),\bm{\phi}\rangle,
\end{equation}
for $t\in [0,T]$ and $\bm{\phi}\in \mf{V}^\e$. 

Moreover, for every $\e>0$

\begin{equation}\label{estZ1}
\mathbb{E}\|\mf{Z}^\e(t)\|_{\mf{H}^\e}^{2}
+\e^2\mathbb{E}\int_{0}^{t}\|\mf{Z}^\e(s)\|_{\mf{V}^\e}^{2}ds
\leq C_{T}, \quad t\in [0,T]
\end{equation}
and 

\begin{equation}\label{estZ2}
\mathbb{E}\sup_{t\in[0,T]}\|\mf{Z}^\e(t)\|_{\mf{H}^\e}^{2}
\leq C_{T}.
\end{equation}

\end{theorem}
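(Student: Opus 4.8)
The plan is to treat \eqref{systemZ} as a standard linear stochastic evolution equation with additive noise driven by the analytic semigroup $S_\e$, and to extract the uniform-in-$\e$ bounds from careful estimates on $\mf{G}^\e$. Since by Lemma~\ref{lemmaAe} the operator $\e^2\mf{A}^\e$ is positive, self-adjoint and generates the analytic semigroup $S_\e(t)$, existence and uniqueness of a mild solution in the stated class follow from the classical theory of stochastic convolutions (see \cite{DPZ}, Theorem~5.4 and its corollaries), provided the stochastic convolution \eqref{mildZ} is well defined, i.e. provided $\int_0^T\|S_\e(t-s)\mf{G}^\e(s)\|_{L_Q(\cdot,\mf{H}^\e)}^2\,ds<\infty$. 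The identification of the mild solution with the weak (variational) solution \eqref{varZ} is then also standard: one tests \eqref{mildZ} against $\bm{\phi}\in\mf{V}^\e$, uses self-adjointness of $\mf{A}^\e$ together with a stochastic Fubini argument, and writes $\langle\mf{A}^\e\mf{Z}^\e,\bm{\phi}\rangle=\langle(\mf{A}^\e)^{1/2}\mf{Z}^\e,(\mf{A}^\e)^{1/2}\bm{\phi}\rangle$, which is legitimate because $\mf{A}^\e$ is positive self-adjoint so $(\mf{A}^\e)^{1/2}$ is well defined.

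The crux of the theorem, and the only genuinely $\e$-dependent point, is to show that $\mathbb{E}\int_0^T\|\mf{G}^\e(s)\|_{L_Q(\cdot,\mf{H}^\e)}^2\,ds\le C_T$ with $C_T$ independent of $\e$; this is what upgrades the classical estimates to uniform ones. Because of the block structure \eqref{defGe}, this Hilbert--Schmidt norm splits as
\[
\|\mf{G}^\e(s)\|_{L_Q}^2=\|g_1(s)\|_{L_{Q_1}(L^2(D)^2,L^2(D^\e)^2)}^2+\e^2\big\|[g_2^\e(s)]_\tau\big\|_{L_{Q_2}(L^2(D)^2,L^2(\partial O^\e)^2)}^2.
\]
The first term is controlled by $\|g_1(s)\|_{Q_1}^2\le C_T$ from \eqref{eqCT}, since restricting to $D^\e\subset D$ only decreases the $L^2$ norm. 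For the boundary term I would use $g_2^\e=g_{21}+\mathcal{R}^\e g_{22}$ and treat the two contributions separately. For the $H^1$-regular part $g_{21}$, the trace inequality on the perforated domain $\e\|v\|_{L^2(\partial O^\e)^2}^2\le C\|v\|_{H^1(D)^2}^2$ (uniform in $\e$) gives $\e^2\sum_j\lambda_{j2}\|g_{21}(s)e_{j2}\|_{L^2(\partial O^\e)^2}^2\le C\e\,\|g_{21}(s)\|_{Q_2}^2\le C\e\,C_T$. For the oscillating part, the definition \eqref{defRe} together with $Y$-periodicity and a change of variables yields $\|\mathcal{R}^\e h\|_{L^2(\partial O^\e)^2}^2\le C\e^{-1}\|h\|_{L^2(\partial O)^2}^2$, so that $\e^2\sum_j\lambda_{j2}\|\mathcal{R}^\e g_{22}(s)e_{j2}\|_{L^2(\partial O^\e)^2}^2\le C\e\,\|g_{22}(s)\|_{Q_2}^2\le C\e\,C_T$. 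Summing, $\|\mf{G}^\e(s)\|_{L_Q}^2\le C_T$ uniformly in $\e$, which in particular makes \eqref{mildZ} well defined.

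With this in hand the two moment estimates follow from It\^o's formula applied to $t\mapsto\|\mf{Z}^\e(t)\|_{\mf{H}^\e}^2$ via the weak form \eqref{varZ}:
\[
\|\mf{Z}^\e(t)\|_{\mf{H}^\e}^2+2\e^2\int_0^t\langle\mf{A}^\e\mf{Z}^\e,\mf{Z}^\e\rangle\,ds=2\int_0^t\langle\mf{Z}^\e,\mf{G}^\e\,d\mf{W}\rangle+\int_0^t\|\mf{G}^\e\|_{L_Q}^2\,ds.
\]
Taking expectations kills the martingale term; since $\langle\mf{A}^\e\mf{Z}^\e,\mf{Z}^\e\rangle\ge0$ and, by coercivity of $\mf{A}^\e$ together with the Poincar\'e inequality (uniform in $\e$ for functions vanishing on $\partial D$), $\langle\mf{A}^\e\mf{Z}^\e,\mf{Z}^\e\rangle\ge c\|\mf{Z}^\e\|_{\mf{V}^\e}^2$ with $c$ independent of $\e$, the previous paragraph yields \eqref{estZ1}. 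For the pathwise supremum \eqref{estZ2} I would take the supremum in $t$ before the expectation, drop the nonpositive term $-2\e^2\int_0^t\langle\mf{A}^\e\mf{Z}^\e,\mf{Z}^\e\rangle\,ds$, and bound the stochastic term by the Burkholder--Davis--Gundy inequality,
\[
\mathbb{E}\sup_{t\le T}\Big|\int_0^t\langle\mf{Z}^\e,\mf{G}^\e\,d\mf{W}\rangle\Big|\le C\,\mathbb{E}\Big(\int_0^T\|\mf{Z}^\e\|_{\mf{H}^\e}^2\|\mf{G}^\e\|_{L_Q}^2\,ds\Big)^{1/2}\le\tfrac12\mathbb{E}\sup_{t\le T}\|\mf{Z}^\e\|_{\mf{H}^\e}^2+C\,\mathbb{E}\int_0^T\|\mf{G}^\e\|_{L_Q}^2\,ds,
\]
the last step being Young's inequality; absorbing the first term on the left and invoking the uniform bound on $\mf{G}^\e$ gives \eqref{estZ2}. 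The main obstacle is the boundary estimate of the second paragraph: making the powers of $\e$ balance requires the correct trace inequality on the perforated domain for $g_{21}$ and the periodic rescaling identity for $\mathcal{R}^\e g_{22}$, and it is precisely the factor $\e$ in \eqref{defGe} that renders both contributions bounded (indeed vanishing) uniformly in $\e$.
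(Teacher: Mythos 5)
Your proof is correct, and its skeleton coincides with the paper's: existence and uniqueness of the mild solution from the linear additive-noise theory of Da Prato--Zabczyk (the paper cites Theorem 7.4 of \cite{DPZ}), identification of mild and weak solutions by the standard testing/stochastic-Fubini argument, the It\^{o} energy identity for $\|\mf{Z}^\e(t)\|_{\mf{H}^\e}^2$, expectation plus Gronwall for \eqref{estZ1}, and Burkholder--Davis--Gundy plus Young for \eqref{estZ2}. Where you genuinely diverge --- and add value --- is your second paragraph: the paper simply invokes \eqref{eqCT} and writes $\int_0^t\|\mf{G}^\e(s)\|_Q^2\,ds\le C_T$ without comment, whereas you actually verify the $\e$-uniformity by splitting $g_2^\e=g_{21}+\mathcal{R}^\e g_{22}$, using the perforated-domain trace inequality $\e\|v\|^2_{L^2(\partial O^\e)^2}\le C\|v\|^2_{H^1(D)^2}$ for the regular part and the cell-counting rescaling $\|\mathcal{R}^\e h\|^2_{L^2(\partial O^\e)^2}\le C\e^{-1}\|h\|^2_{L^2(\partial O)^2}$ for the oscillating part; both are correct (the surface measure of $\partial O^\e$ grows like $\e^{-1}$, and the prefactor $\e$ in \eqref{defGe} contributes $\e^2$ to the squared norm, so both boundary contributions are in fact $O(\e)$). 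This is precisely the computation needed for $C_T$ to be independent of $\e$, which the passage to the limit in Section \ref{section5} requires, so filling it in strengthens the argument. One smaller divergence deserves a caveat: for the $\mf{V}^\e$-part of \eqref{estZ1} you invoke an $\e$-uniform coercivity $\langle\mf{A}^\e\mf{Z},\mf{Z}\rangle\ge c\|\mf{Z}\|^2_{\mf{V}^\e}$ via a Poincar\'e inequality uniform in $\e$; this is true but not elementary, since elements of $\mf{V}^\e$ vanish only on $\partial D$ and not on $\partial O^\e$, so the uniform constant rests on the standard extension operator for perforated domains (Cioranescu--Saint Jean Paulin), and note that Lemma \ref{lemmaAe} asserts coercivity only with an $\e$-dependent constant $c(\e)$. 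The paper sidesteps this point entirely: since $\langle\mf{A}^\e\mf{Z},\mf{Z}\rangle$ dominates $\nu\|\nabla z\|^2_{L^2(D^\e)}$, it completes the $\mf{V}^\e$-norm by adding $\int_0^t\|\mf{Z}^\e(s)\|^2_{\mf{H}^\e}\,ds$ to the energy inequality (this is the extra term in \eqref{Z1}) and controls it with the Gronwall bound already obtained in \eqref{Z2} --- a cheaper device that avoids any uniform extension/Poincar\'e input, which you may prefer to quote rather than leaving the uniform Poincar\'e inequality unproved.
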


\begin{proof}
Since the operator $ \e^2 \mf{A}^{\e}$ is the generator of a strongly continuous semigroup 
$S_{\e}(t),\ t\geq 0$ in $\mf{H}^\e$ and using the assumption \eqref{eqCT}, then the existence and uniqueness of mild solutions in  $\mf{H}^\e$ is a consequence of Theorem 7.4 of \cite{DPZ}.
The regularity in  $\mf{V}^\e$ is a consequence of the estimates below.

Applying the It\^{o} formula to $\|\mf{Z}^\e(t)\|_{\mf{H}^\e}^{2}$, we get that
\begin{align*}
d\|\mf{Z}^\e(t)\|_{\mf{H}^\e}^{2}&=2\langle \mf{Z}^\e(t),d\mf{Z}^\e(t)\rangle dt+\| \mf{G}^\e(t)\|_{Q}^{2}dt\\
&=-2\langle \e^2\mf{A}^\e \mf{Z}^\e(t),\mf{Z}^\e(t)\rangle dt
+2\langle \mf{G}^\e(t)d\mf{W},\mf{Z}^\e(t)\rangle dt
+\| \mf{G}^\e(t)\|_{Q}^{2}dt.
\end{align*}
Hence,

\begin{align}\label{Z1}
\|\mf{Z}^\e(t)\|_{\mf{H}^\e}^{2}&+2\e^2\int_{0}^{t}\|(\mf{A}^\e)^{1/2} \mf{Z}^\e(s)\|_{\mf{H}^\e}^{2}ds\leq 
\|\mf{Z}^\e_{0}\|_{\mf{H}^\e}^{2}+\int_{0}^{t}\|\mf{Z}^\e(s)\|_{\mf{H}^\e}^{2}ds
\nonumber\\
&+2\int_{0}^{t} \langle \mf{G}^\e(s)d\mf{W},\mf{Z}^\e(t)\rangle +\int_{0}^{t}\| \mf{G}^\e(s)\|_{Q}^{2}ds.
\end{align}
Taking the expected value yields

\begin{equation}\label{Z2}
\mathbb{E}\|\mf{Z}^\e(t)\|_{\mf{H}^\e}^{2}\leq
\int_{0}^{t}\mathbb{E}\|\mf{Z}^\e(s)\|_{\mf{H}^\e}^{2}ds
+\int_{0}^{t}\| \mf{G}^\e(s)\|_{Q}^{2}ds
\end{equation}
and 
\begin{equation}\label{Z3}
\e^2\mathbb{E}\int_{0}^{t}\|\mf{Z}^\e(s)\|_{\mf{V}^\e}^{2}ds
\leq \int_{0}^{t}\mathbb{E}\|\mf{Z}^\e(s)\|_{\mf{H}^\e}^{2}ds 
+\int_{0}^{t}\| \mf{G}^\e(s)\|_{Q}^{2}ds
\end{equation}

Now  \eqref{estZ1} follows from using Gronwall's lemma in \eqref{Z2}.

On the other side, \eqref{Z1} implies that
\begin{align*}
\sup_{0\leq t\leq T}\|\mf{Z}^\e(t)\|_{\mf{H}^\e}^{2}&+
2\e^2\int_{0}^{T}\|\mf{Z}^\e(s)\|_{\mf{V}^\e}^{2}ds\leq 
\int_{0}^{T}\|\mf{Z}^\e(s)\|_{\mf{H}^\e}^{2}ds\\
&+2\sup_{0\leq t\leq T}\left |\int_{0}^{t} \langle \mf{G}^\e(s)d\mf{W},\mf{Z}^\e(t)\rangle \right |
+\int_{0}^{T}\| \mf{G}^\e(s)\|_{Q}^{2}ds
\end{align*}
Moreover using the Burkholder-David-Gundy inequality and the Young inequality, we get that

\begin{align*}
\mathbb{E}\sup_{0\leq t\leq T}\left |\int_{0}^{t} \langle \mf{G}^\e(s)d\mf{W},\mf{Z}^\e(t)\rangle \right | &\leq 
\mathbb{E}\left(\int_{0}^{T}\left |\mf{G}^\e(s) \mf{Z}^\e(s)\right |^{2}ds\right)^{1/2}\\
&\leq \frac{1}{2} \mathbb{E}\sup_{0\leq t\leq T}\|\mf{Z}^\e(t)\|_{\mf{H}^\e}^{2}
+ C\int_{0}^{T}|\mf{G}^\e(s)|^{2}ds\\
&\leq \frac{1}{2} \mathbb{E}\sup_{0\leq t\leq T}\|\mf{Z}^\e(t)\|_{\mf{H}^\e}^{2}+C_{T}
\end{align*}

Now, plugging this estimate in the previous one and using Gronwall's lemma completes the proof of 
\eqref{estZ2}.
\end{proof}

\subsection{Well posedness of the stochastic Navier Stokes equation}
\label{subsection22}
In this paragraph we will state the existence and uniqueness of solutions for the 2-d stochastic Navier Stokes equation with a dynamical slip boundary condition, coupled with a Dirichlet boundary condition driven by a noise on the boundary and interior of the domain. Since the noise is additive, we will use a pathwise argument. For similar results with different boundary conditions, see \cite{flandoli1994}.

\begin{theorem}\label{exU}
For any $\e > 0$, $\mf{U}_0^\e \in \mf{H}^\e$ and $T>0$, there exists a unique stochastic process $\mf{U}^\e$, solution of equation \eqref{systemU} in the following
sense: $\mathbb{P}$-a.s.
$$\mf{U}^\e\in C([0,T]; \mf{H}^\e)\cap L^{2}([0,T]; \mf{V}^\e)$$
and
\begin{align}\label{varU}
\langle \mf{U}^\e(t),\bm{\phi}\rangle &+\e^2\int_{0}^{t}\langle (\mf{A}^\e)^{1/2} \mf{U}^\e(s),(\mf{A}^\e)^{1/2}\bm{\phi}\rangle ds
-\e^\beta\int_{0}^{t} \langle \mf{B}^\e(\mf{U}^\e(s), \bm{\phi}), \mf{U}^\e(s)\rangle ds=\langle \mf{U}^\e_{0},\bm{\phi}\rangle\nonumber \\
&+\int_{0}^{t} \langle \mf{F}(s),\bm{\phi}\rangle ds + \int_{0}^{t} \langle  \mf{G}^\e(s)  d\mf{W}(s), \bm{\phi}\rangle
\end{align}
for all $t\in [0,T]$ and for all $\bm{\phi}\in \mf{V}^\e$.
Moreover,
\begin{equation}\label{estU}
 \E\left(\sup_{0\leq t\leq T}\|\mf{U}^\e(t)\|_{\mf{H}^\e}^{2}
	+\e^2\int_{0}^{T}\|\mf{U}^\e(t)\|_{\mf{V}^\e}^{2}dt \right)<C_T.
\end{equation}
\end{theorem}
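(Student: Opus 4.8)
The plan is to exploit the additivity of the noise and reduce the problem to a pathwise deterministic Navier--Stokes system, following the by-now classical subtraction trick of Flandoli. Let $\mf{Z}^\e$ denote the Ornstein--Uhlenbeck process furnished by Theorem \ref{exZ}, which by \eqref{estZ1}--\eqref{estZ2} satisfies $\mf{Z}^\e \in L^2(\Omega; C([0,T];\mf{H}^\e)\cap L^2(0,T;\mf{V}^\e))$, and set $\mf{Y}^\e := \mf{U}^\e - \mf{Z}^\e$. Subtracting \eqref{systemZ} from \eqref{systemU}, the stochastic integral cancels and $\mf{Y}^\e$ becomes a candidate solution of the random, but pathwise deterministic, equation
\begin{equation*}
\frac{d}{dt}\mf{Y}^\e + \e^2\mf{A}^\e \mf{Y}^\e + \e^\beta \mf{B}^\e(\mf{Y}^\e+\mf{Z}^\e,\, \mf{Y}^\e+\mf{Z}^\e) = \mf{F}, \qquad \mf{Y}^\e(0)=\mf{U}_0^\e,
\end{equation*}
in which $\mf{Z}^\e(\cdot,\omega)$ plays the role of a fixed coefficient for $\mathbb{P}$-almost every $\omega$. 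It therefore suffices to solve this deterministic problem for each such $\omega$ and to recover $\mf{U}^\e = \mf{Y}^\e + \mf{Z}^\e$.

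First I would construct $\mf{Y}^\e$ by a Faedo--Galerkin scheme in $\mf{V}^\e$, using the eigenbasis of the self-adjoint operator $\mf{A}^\e$ from Lemma \ref{lemmaAe}. Testing the finite-dimensional system against the approximate solution $\mf{Y}^\e_m$ and using the antisymmetry $\langle \mf{B}^\e(\mf{Y}^\e_m+\mf{Z}^\e,\mf{Y}^\e_m),\mf{Y}^\e_m\rangle=0$ from \eqref{prop1Be}, the cubic term drops and one is left to bound $\langle \mf{B}^\e(\mf{Y}^\e_m+\mf{Z}^\e,\mf{Z}^\e),\mf{Y}^\e_m\rangle$. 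This is the crux of the a priori estimate: applying \eqref{prop3Be} together with the two-dimensional Ladyzhenskaya inequality \eqref{inter} and Young's inequality, one absorbs the resulting factor $\|\mf{Y}^\e_m\|_{\mf{V}^\e}^2$ into the dissipation $\e^2\|\mf{Y}^\e_m\|_{\mf{V}^\e}^2$, leaving a term of the form $\phi(s)\|\mf{Y}^\e_m\|_{\mf{H}^\e}^2$ with $\phi \in L^1(0,T)$ built from the norms of $\mf{Z}^\e(\cdot,\omega)$. Gronwall's lemma then yields an $m$-uniform bound in $C([0,T];\mf{H}^\e)\cap L^2(0,T;\mf{V}^\e)$, together with a companion bound on $\tfrac{d}{dt}\mf{Y}^\e_m$ in $L^2(0,T;(\mf{V}^\e)')$ obtained by estimating the nonlinearity in the dual norm via \eqref{prop2Be}.

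With these bounds in hand I would pass to the limit $m\to\infty$ by extracting weak-$\ast$ and weak limits in the above spaces and, crucially, a strong limit in $L^2(0,T;\mf{H}^\e)$ via the Aubin--Lions lemma; this strong convergence is precisely what is needed to identify the weak limit of the quadratic term $\mf{B}^\e(\mf{Y}^\e_m+\mf{Z}^\e,\,\cdot\,)$. Uniqueness follows in the standard two-dimensional manner: for two solutions the difference $\mf{D}^\e$ satisfies an equation whose nonlinear contribution is controlled, again through \eqref{prop3Be}--\eqref{inter} and Young, by a quantity absorbable into the dissipation plus $\psi(s)\|\mf{D}^\e\|_{\mf{H}^\e}^2$ with $\psi\in L^1(0,T)$, so $\mf{D}^\e\equiv 0$ by Gronwall. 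Measurability of $\omega\mapsto \mf{U}^\e$ and adaptedness are inherited from those of $\mf{Z}^\e$, since the pathwise solution map is continuous in the data.

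Finally, the moment estimate \eqref{estU} is obtained directly on $\mf{U}^\e$ rather than on $\mf{Y}^\e$: applying the It\^o formula to $\|\mf{U}^\e(t)\|_{\mf{H}^\e}^2$ (rigorously justified at the Galerkin level and then passed to the limit), the nonlinear term disappears entirely because $\langle \mf{B}^\e(\mf{U}^\e,\mf{U}^\e),\mf{U}^\e\rangle=0$ by \eqref{prop1Be}. The computation then reduces to the one already carried out in the proof of Theorem \ref{exZ}, with the extra forcing $\mf{F}$ handled by Cauchy--Schwarz and Young, and the supremum bound obtained from the Burkholder--Davis--Gundy and Gronwall inequalities exactly as for \eqref{estZ2}. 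I expect the principal difficulty to be the nonlinear a priori estimate for $\mf{Y}^\e$ and the verification that the time-dependent coefficient $\phi$ is genuinely integrable using only the regularity \eqref{estZ1}--\eqref{estZ2} of $\mf{Z}^\e$; once the cubic term is eliminated by antisymmetry, the remaining analysis is routine for a two-dimensional model.
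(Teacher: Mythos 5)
Your proposal is correct and takes essentially the same route as the paper: the Flandoli-type subtraction of the Ornstein--Uhlenbeck process $\mf{Z}^\e$ from Theorem \ref{exZ}, a pathwise Galerkin scheme in which the antisymmetry \eqref{prop1Be} kills the cubic term and \eqref{prop3Be} with Ladyzhenskaya, Young and Gronwall yields the uniform bounds, compactness (Aubin--Lions) to identify the nonlinearity in the limit, Gronwall again for uniqueness, and finally the It\^{o} formula applied to $\|\mf{U}^\e(t)\|_{\mf{H}^\e}^{2}$ together with Burkholder--Davis--Gundy, exactly as for \eqref{estZ2}, to obtain \eqref{estU}. The only cosmetic difference is that the paper derives the a priori estimates on the full equation before projecting onto the Galerkin subspaces, whereas you perform them directly at the Galerkin level.
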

\begin{proof}
The proof of this theorem is based on a pathwise argument. Let us denote by $\o{\mf{U}}^\e=\mf{U}^\e - \mf{Z}^\e$, then $\o{\mf{U}}^\e$ is the solution of the following random differential equation
\begin{equation}
\label{systemoU}
\left\{
\begin{array}{rll}
d \o{\mf{U}}^\e(t) &+\e^2\mathbf{A}^\e \o{\mf{U}}^\e (t) dt + \e^\beta \mf{B}^\e(\o{\mf{U}}^\e(t) +\mf{Z}^\e(t),\o{\mf{U}}^\e(t)+\mf{Z}^\e(t))dt=\mathbf{F}(t) dt \\

\o{\mf{U}}^\e(0)&=\left( \begin{array}{c}
u_0^\e\\
\overline{u_{0}^{\e}}=\sqrt{\e} v_0^\e
\end{array}
\right),
\end{array}
\right.
\end{equation}
that we study as a deterministic evolution equation for almost every $\omega\in\Omega$. More precisely, we show that for almost every $\omega\in\Omega$ the solution $\o{\mf{U}}$ of the equation \eqref{systemoU} exists, $\o{\mf{U}} \in C([0,T]; \mf{H}^\e)\cap L^{2}([0,T]; \mf{V}^\e)$, satisfies the weak formulation
\begin{align}\label{varoU}
\langle \o{\mf{U}}^\e(t),\bm{\phi}\rangle &+\e^2\int_{0}^{t}\langle (\mf{A}^\e)^{1/2} \o{\mf{U}}^\e(s),(\mf{A}^\e)^{1/2}\bm{\phi}\rangle ds
-\e^\beta\int_{0}^{t} \langle \mf{B}^\e(\o{\mf{U}}^\e(s) + \mf{Z}(s), \bm{\phi}), \o{\mf{U}}^\e(s)+\mf{Z}(s)\rangle ds\\
&=\langle \o{\mf{U}}^\e_{0},\bm{\phi}\rangle+\int_{0}^{t} \langle \mf{F}(s),\bm{\phi}\rangle ds \bm{\phi}\rangle,
\end{align}
for all $t\in [0,T]$ and for all $\bm{\phi}\in \mf{V}^\e$.

First we prove some a priori estimates for the solution of \eqref{systemoU}. We have for almost all $t\in [0,T]$:
\begin{equation}
\begin{split}
&\left\langle(\o{\mf{U}}^\e)'(t) , \o{\mf{U}}^\e(t) \right\rangle_{\langle(\mf{V}^\e)',\mf{V}^\e\rangle} + \e^2 \left\langle\mf{A}^\e \o{\mf{U}}^\e(t) , \o{\mf{U}}^\e(t) \right\rangle_{\langle(\mf{V}^\e)',\mf{V}^\e\rangle}\\
&+\e^\beta \left\langle\mf{B}^\e (\o{\mf{U}}^\e(t)+\mf{Z}^\e(t),\o{\mf{U}}^\e(t)+\mf{Z}^\e)(t),\o{\mf{U}^\e}(t) \right\rangle_{\langle(\mf{V}^\e)',\mf{V}^\e\rangle}=\left\langle\mf{F}(t) , \o{\mf{U}}^\e(t) \right\rangle_{\langle(\mf{V}^\e)',\mf{V}^\e\rangle},
\end{split}
\end{equation}
which implies after using Lemma \ref{propBe} that
\begin{equation}
\begin{split}
&\dfrac{1}{2}\dfrac{\partial}{\partial t}\|\o{\mf{U}}^\e(t)\|^2_{\mf{H}^\e} +\e^2c(\e) \|\o{\mf{U}}^\e(t) \|^2_{\mf{V}^\e} \leq \|\mf{F}(t)\|_{\mf{H}^\e} \|\o{\mf{U}}^\e(t)\|_{\mf{H}^\e} \\
&- \e^\beta\left\langle\mf{B}^\e (\o{\mf{U}}^\e(t)+\mf{Z}^\e(t),\o{\mf{U}}^\e(t)),\mf{Z}^\e(t) \right\rangle_{\langle(\mf{V}^\e)',\mf{V}^\e\rangle}\\
&\leq \dfrac{c(\e) \e^2}{4}\| \o{\mf{U}}^\e(t)\|^2_{\mf{V}^\e} + C(\e)\|\mf{F}(t)\|^2_{\mf{H}^\e}- \e^\beta\left\langle\mf{B}^\e (\o{\mf{U}}^\e(t)+\mf{Z}^\e(t),\o{\mf{U}}^\e(t)),\mf{Z}^\e(t) \right\rangle_{\langle(\mf{V}^\e)',\mf{V}^\e\rangle}.
\end{split}
\end{equation}
The property \eqref{prop3Be} implies that 
\begin{equation}\nonumber
\begin{split}
&\e^\beta|\left\langle\mf{B}^\e (\o{\mf{U}}^\e(t)+\mf{Z}^\e(t),\o{\mf{U}}^\e(t)),\mf{Z}^\e(t) \right\rangle_{\langle(\mf{V}^\e)',\mf{V}^\e\rangle}| \leq \e^\beta|\left\langle\mf{B}^\e (\o{\mf{U}}^\e(t),\o{\mf{U}}^\e(t)),\mf{Z}^\e(t) \right\rangle_{\langle(\mf{V}^\e)',\mf{V}^\e\rangle}|\\
&+\e^\beta|\left\langle\mf{B}^\e (\mf{Z}^\e(t),\o{\mf{U}}^\e(t)),\mf{Z}^\e(t) \right\rangle_{\langle(\mf{V}^\e)',\mf{V}^\e\rangle}|=\e^\beta|\left\langle\mf{B}^\e (\o{\mf{U}}^\e(t),\mf{Z}^\e(t)),\mf{U}^\e(t) \right\rangle_{\langle(\mf{V}^\e)',\mf{V}^\e\rangle}|\\
&+\e^\beta|\left\langle\mf{B}^\e (\mf{Z}^\e(t),\o{\mf{U}}^\e(t)),\mf{Z}^\e(t) \right\rangle_{\langle(\mf{V}^\e)',\mf{V}^\e\rangle}| \leq \e^\beta C_\e\| \o{\mf{U}}^\e(t)\|_{\mf{H}^\e} \| \mf{Z}^\e(t)\|_{\mf{V}^\e}  \| \o{\mf{U}}^\e(t)\|_{\mf{V}^\e}\\
&+ \e^\beta C_\e\| \mf{Z}^\e(t)\|_{\mf{H}^\e} \| \o{\mf{U}}^\e(t))\|_{\mf{V}^\e}  \| \mf{Z}^\e(t)\|_{\mf{V}^\e}\leq  \dfrac{c(\e) \e^2}{4}\| \o{\mf{U}}^\e(t)\|^2_{\mf{V}^\e} +\\
&+C(\e) \left(\| \o{\mf{U}}^\e(t)\|^2_{\mf{H}^\e} \| \mf{Z}^\e(t)\|^2_{\mf{V}^\e} +\| \mf{Z}^\e(t)\|^2_{\mf{H}^\e} \| \mf{Z}^\e(t)\|^2_{\mf{V}^\e}\right).
\end{split}
\end{equation}
We integrate the previous inequality over $[0,T]$.
Now, using Theorem \ref{exZ}, we have that 
$\mathbb{P}-{\rm a.s.}\quad  \mf{Z}^\e\in C([0,T]; \mf{H}^\e)\cap L^{2}([0,T]; \mf{V}^\e).$
Hence,

\begin{equation}\label{4}
\begin{split}
 \|\o{\mf{U}}^\e(t)\|^2_{\mf{H}^\e} &+\e^2c(\e) \int_0^t\|\o{\mf{U}}^\e(s) \|^2_{\mf{V}^\e} ds \leq  \|\o{\mf{U}}^\e(0)\|^2_{\mf{H}^\e} + C(\e) \int_0^t \|\mf{F}(s)\|^2_{\mf{H}^\e} ds \\
&+ C(\e) \left(\int_0^t \| \o{\mf{U}}^\e(s)\|^2_{\mf{H}^\e} 
\| \mf{Z}^\e(s)\|^2_{\mf{V}^\e} ds 
+ \int_0^t \| \mf{Z}^\e(s)\|^2_{\mf{H}^\e} \| \mf{Z}^\e(s)\|^2_{\mf{V}^\e} ds\right)\\
&\leq C(\e,T,\omega) +  \|\mf{F}\|_{L^2(0,T;\mf{H}^\e)}  + \|\o{\mf{U}}^\e(0)\|^2_{\mf{H}^\e} 
+C(\e) \int_0^t \| \o{\mf{U}}^\e(s)\|^2_{\mf{H}^\e} \| \mf{Z}^\e(s)\|^2_{\mf{V}^\e} ds,
\end{split}
\end{equation}

where $C(\e,T,\omega)$ is a constant that is $\mathbb{P}$-a.s. bounded and depends  
on $\e$, $T$ and $\omega$.

Applying Gronwall's Lemma we get the estimate:
\begin{equation}\label{estoU1}
\begin{split}
\sup_{[0,T]}\|\o{\mf{U}}^\e(t)\|^2_{\mf{H}^\e}&\leq \left( C(\e,T,\omega)
+ \|\mf{F}\|_{L^2(0,T;\mf{H}^\e)} + \|\o{\mf{U}}^\e(0)\|^2_{\mf{H}^\e}\right) 
e^{C(\e) \displaystyle \int_0^T \| \mf{Z}^\e(s)\|^2_{\mf{V}^\e} ds}\\
&\leq  C(\e,T,\omega) \left(1+ \|\mf{F}\|_{L^2(0,T;\mf{H}^\e)}
+ \|\o{\mf{U}}^\e(0)\|^2_{\mf{H}^\e} \right).
\end{split}
\end{equation}
Applying the estimate \eqref{estoU1} to \eqref{4} we also obtain that:
\begin{equation}\label{estoU2}
\int_0^T\|\o{\mf{U}}^\e(t)\|^2_{\mf{V}^\e} \leq C(\e,T,\omega) \left(1+ \|\mf{F}\|_{L^2(0,T;\mf{H}^\e)}+\|\o{\mf{U}}^\e(0)\|^2_{\mf{H}^\e} \right).
\end{equation}
We apply the Galerkin approximation to our problem. Let us denote by $\mf{H}_n^\e$ n-dimensional subspace of $\mf{H}^\e$ generated by the first $n$ eigenvectors of the operator $\mf{A}^\e$, $\mf{P}_n^\e$ the projection operator from $\mf{H}^\e$ to $\mf{H}^\e_n$, and let $\o{\mf{U}}_n^\e = \mf{P}_n \o{\mf{U}}^\e$. Then, $\o{\mf{U}}_n^\e$ is the unique global solution of the equation
\begin{equation}
\label{systemoUn}
\left\{
\begin{array}{rll}
d \o{\mf{U}}^\e_n(t) &+\e^2\mathbf{A}^\e \o{\mf{U}}^\e_n (t) dt + \e^\beta \mf{P}_n \mf{B}^\e(\o{\mf{U}}_n^\e(t) +\mf{Z}^\e(t),\o{\mf{U}}_n^\e(t)+\mf{Z}^\e(t))dt=\mf{P}_n \mathbf{F}(t) dt \\
\\
\o{\mf{U}}_n^\e(0)&=\mf{P}_n \mf{U}^\e(0).
\end{array}
\right.
\end{equation}
Moreover, $\o{\mf{U}}_n^\e$ satisfies the estimates \eqref{estoU1} and \eqref{estoU2}, hence $\o{\mf{U}}_n^\e$ is uniformly bounded in $L^\infty (0,T; \mf{H}^\e) \cap L^2(0, T; \mf{V}^\e)$, thus $\o{\mf{U}}_n^\e$ is also bounded in $W^{1,2}(0,T;(\mf{V}^\e)')$. Using a compactness argument, we deduce up to a subsequence that 
$$\o{\mf{U}}_n^\e \to \o{\mf{U}}^\e,$$
strongly in $L^2(0,T; \mf{H}^\e)$. Now it is classical to pass to the limit in the variational formulation for $\mf{U}^\e_n$ (see \cite{temam}) to get \eqref{varoU}. Since $\mf{U}^\e=\o{\mf{U}}^\e+\mf{Z}^\e$, we deduce that $\mf{U}^\e$ satisfies the variational formulation \eqref{varU}. To show the uniqueness of 
$\mf{U}^\e$, let us assume that $\mf{U}^\e_1$ and $\mf{U}^\e_2$ are solutions of the equation \eqref{systemU}, then $\mf{U}_*^\e = \mf{U}^\e_2-\mf{U}^\e_1$ is solution of 
\begin{equation}
\label{systemV}
\left\{
\begin{array}{rll}
d \mf{U}_*^\e(t) &+\e^2\mathbf{A}^\e \mf{U}_*^\e (t) dt + \e^\beta \mf{B}^\e(\mf{U}_*^\e(t),\mf{U}_1^\e(t)) dt + \e^\beta \mf{B}^\e(\mf{U}_2^\e(t),\mf{U}_*^\e(t)) dt=\mf{0}\\
\\
\mf{U}_*^\e(0)&=\mf{0},
\end{array}
\right.
\end{equation}
We multiply with $\mf{U}_*^\e$, and we get as before
$$\dfrac{\partial}{\partial t} \| \mf{U}_*^\e\|^2_{\mf{H}^\e} \leq C(\e) \|\mf{U}_1^\e\|^2_{\mf{V}^\e} \| \mf{U}_*^\e\|^2_{\mf{H}^\e} $$
We apply the Gronwall lemma to get the uniqueness.
Measurability of the process $\mf{U}^\e$ follows from the measurability of the Galerkin approximation. 

Applying the It\^{o} formula to $\|\mf{U}^\e(t)\|_{\mf{H}^\e}^{2}$ and using Lemma \ref{propBe}, we get that
\begin{align*}
&d\|\mf{U}^\e(t)\|_{\mf{H}^\e}^{2}=2\langle \mf{U}^\e(t),d\mf{U}^\e(t)\rangle dt+\| \mf{G}^\e(t)\|_{Q}^{2}dt\\
&=-2\langle \e^2\mf{A}^\e \mf{U}^\e(t),\mf{U}^\e(t)\rangle dt - 2\e^\beta \langle\mf{B}(\mf{U}^\e,\mf{U}^\e) , \mf{U}^\e\rangle
+2\langle \mf{G}^\e(t)d\mf{W},\mf{U}^\e(t)\rangle dt
+\| \mf{G}^\e(t)\|_{Q}^{2}dt\\
&=-2\langle \e^2\mf{A}^\e \mf{U}^\e(t),\mf{U}^\e(t)\rangle dt+2\langle \mf{G}^\e(t)d\mf{W},\mf{U}^\e(t)\rangle dt
+\| \mf{G}^\e(t)\|_{Q}^{2}dt.
\end{align*}
Using similar calculations done for $\mf{Z}^\e$ in Theorem \ref{exZ}, we get \eqref{estU} and this completes the proof.

\end{proof}

As a consequence of theorem \ref{exU}, the variational formulation \eqref{varU} can be rewritten in terms of $u$ solution of system \eqref{systemu}. 

\begin{cor}\label{Pe}
$u$ the solution of  system \eqref{systemu} satisfies the following variational formulation:

\begin{equation}\label{varu}
\begin{split}
\int_{D^\e} \left(u^\e(t)-u^\e_{0}-\int_{0}^{t}f(s)ds -\int_{0}^{t}g_{1}(s)dW_{1}(s)+\int_0^t\e^\beta (u^\e(s)\cdot\nabla) u^\e(s)ds \right) \phi dx=\\
-\int_{D^\e}\int_0^t\nu \e^2\nabla u^\e(s)ds \nabla\phi dx+\int_{\partial O^\e} \left(-\e u^\e(t)+\e v^\e_0 +\int_{0}^{t} \e g_{2}^\e(s)dW_{2}(s) ds
- \int_0^t \alpha^\e u^\e(s)ds\right) \phi d\sigma ,
\end{split}
\end{equation}
$\mathbb{P}$-a.s., and for every $\phi \in V^\e$.
\end{cor}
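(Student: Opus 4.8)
The plan is to obtain \eqref{varu} by unfolding the abstract weak formulation \eqref{varU} of Theorem \ref{exU} into concrete volume and surface integrals. By the definition \eqref{spaceVe} of $\mathcal{V}^\e$, every test function $\bm{\phi}=(\phi,\o\phi)\in\mf{V}^\e$ is determined by its first component $\phi\in V^\e$ through $\o\phi=\sqrt{\e}\,\phi|_{\partial O^\e}$, and the solution itself satisfies $\mf{U}^\e=(u^\e,\sqrt{\e}\,u^\e|_{\partial O^\e})$. Substituting these two trace relations into \eqref{varU} and reading off the components is the whole idea; the work is purely in carrying the powers of $\e$ correctly and in treating the convective term.

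First I would expand the two inner-product terms. Using the pairing on $\mf{L}^2_\e$ from \eqref{spaceLe}, one has $\langle\mf{U}^\e(t),\bm{\phi}\rangle=\int_{D^\e}u^\e(t)\cdot\phi\,dx+\int_{\partial O^\e}\o{u}^\e(t)\cdot\o\phi\,d\sigma$, where the boundary term becomes $\e\int_{\partial O^\e}u^\e(t)\cdot\phi\,d\sigma$ once the two $\sqrt{\e}$ factors combine. The identical computation applied to $\mf{U}_0^\e=(u_0^\e,\sqrt{\e}\,v_0^\e)$ yields the initial-data contributions $\int_{D^\e}u_0^\e\cdot\phi\,dx$ and $\e\int_{\partial O^\e}v_0^\e\cdot\phi\,d\sigma$. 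Using the definition \eqref{defAe} of $\mf{A}^\e$, the term $\e^2\langle\mf{A}^\e\mf{U}^\e,\bm{\phi}\rangle$ splits into the viscous part $\nu\e^2\int_{D^\e}\nabla u^\e\cdot\nabla\phi\,dx$ and a boundary friction part in which the factors $\e^2\cdot\e^{-3}$ coming from $\mf{A}^\e$ and the $\e=\sqrt{\e}\cdot\sqrt{\e}$ coming from the two traces collapse to give exactly $\int_{\partial O^\e}\alpha^\e u^\e_\tau\phi_\tau\,d\sigma$; since $\phi\in V^\e$ satisfies $\phi\cdot n=0$ on $\partial O^\e$, the tangential subscripts may be suppressed. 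The forcing and noise terms from \eqref{defFe}--\eqref{defGe} are handled the same way: $\mf{F}=(f,0)$ contributes only the interior integral $\int_{D^\e}f\cdot\phi\,dx$, while $\mf{G}^\e\,d\mf{W}$ contributes the interior stochastic integral $\int_{D^\e}(g_1\,dW_1)\cdot\phi\,dx$ together with its boundary counterpart.

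The one step that is not mere bookkeeping is the nonlinear term. In \eqref{varU} it appears as $\langle\mf{B}^\e(\mf{U}^\e,\bm{\phi}),\mf{U}^\e\rangle=\langle B^\e(u^\e,\phi),u^\e\rangle_{(V^\e)',V^\e}$, with the test function in the middle slot; to recover the form $\int_{D^\e}(u^\e\cdot\nabla)u^\e\cdot\phi\,dx$ of \eqref{varu} I would invoke the antisymmetry \eqref{prop1Be} of Lemma \ref{propBe}, namely $\langle B^\e(u^\e,\phi),u^\e\rangle=-\langle B^\e(u^\e,u^\e),\phi\rangle$, whose validity rests precisely on $u^\e\in V^\e$ being divergence free with $u^\e\cdot n=0$ on $\partial O^\e$. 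Combined with the sign $-\e^\beta$ in front of the term in \eqref{varU}, this produces the $+\e^\beta\int_0^t(u^\e\cdot\nabla)u^\e$ that sits inside the volume integral of \eqref{varu}. Finally I would integrate the pointwise-in-$t$ identity over $[0,t]$ (the $\mf{A}^\e$, $\mf{B}^\e$, $\mf{F}$ and $\mf{G}^\e$ terms of \eqref{varU} are already time-integrated) and collect the volume integrals on the left and the surface integrals on the right. Since \eqref{varU} holds $\mb{P}$-a.s.\ for every $\bm{\phi}\in\mf{V}^\e$ and the correspondence $\phi\mapsto\bm{\phi}=(\phi,\sqrt{\e}\,\phi|_{\partial O^\e})$ is a bijection onto $V^\e$, the same holds for \eqref{varu}.

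The main obstacle is thus twofold, and both parts are mild: getting the sign of the convective term right through the antisymmetry relation, which is the only place where the structure of $V^\e$ (rather than merely $H^\e$) is genuinely used, and keeping scrupulous track of the half-integer powers of $\e$ generated by the trace scaling $\o{u}=\sqrt{\e}\,u$, which enter the boundary friction, the boundary initial datum, and the boundary stochastic integral. No new estimates are needed; the statement is an algebraic reformulation of an identity already established in Theorem \ref{exU}.
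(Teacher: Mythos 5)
Your proposal is correct and coincides with the paper's own (essentially unwritten) argument: the corollary is stated there as an immediate consequence of Theorem \ref{exU}, obtained exactly as you describe by unfolding \eqref{varU} componentwise through the trace relation $\o{u}=\sqrt{\e}\,u|_{\partial O^\e}$, the definitions \eqref{defAe}--\eqref{defGe}, and the antisymmetry \eqref{prop1Be} to convert $-\e^\beta\langle B^\e(u^\e,\phi),u^\e\rangle$ into $+\e^\beta\int_{D^\e}(u^\e\cdot\nabla)u^\e\cdot\phi\,dx$. One remark for your ``scrupulous bookkeeping'': with \eqref{defGe} exactly as printed the boundary stochastic term comes out as $\e^{3/2}\int_{\partial O^\e}[g_2^\e]_\tau\,dW_2\cdot\phi\,d\sigma$ rather than the $\e$-scaling of \eqref{varu}, but this half-power discrepancy is internal to the paper (consistency with the boundary condition in \eqref{systemu} requires $\sqrt{\e}\,[g_2^\e]_\tau$ in the second component of $\mf{G}^\e$), not a gap in your proof.
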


\section{Two scale convergence}
\label{section4}
We will summarize in this section several results about the two scale convergence that we will use throughout the paper. For the results stated without proofs, see \cite{A-2s}, \cite{A95} or \cite{JL13}. First we establish some notations of spaces of periodic functions. We denote by $C_{\#}^k (Y)$ the space of functions from $C^k(\o{Y})$, that have $Y-$ periodic boundary values. By $L^2_{\#}(Y)$ we understand the closure of $C_{\#}(Y)$ in $L^2(Y)$ and by $H^1_{\#}(Y)$ the closure of $C_{\#}^1(Y)$ in $H^1(Y)$. We denote the restrictions of these spaces of functions to $Y^*$ by $C_{\#}^k (Y^*)$, $L^2_{\#}(Y^*)$, and $H^1_{\#}(Y^*)$.

\begin{definition}
\label{2scdef}
We say that a sequence $u^\e \in L^2(\Omega\times [0,T]\times D)^2$ two-scale converges to $u \in L^2(\Omega\times [0,T]\times D \times Y)^2$, and denote this convergence by
$$u^\e \overset{2-s}{\longrightarrow} u\ \ \ \mbox{ in } \Omega\times [0,T]\times D,$$
if for every $\Psi \in L^2(\Omega\times [0,T]\times D; C_{\#} (Y))^2$ we have
\begin{equation*}
\lim_{\e \to 0} \int_\Omega\int_0^T \int _D u^\e (\omega,t,x) \Psi(\omega,t,x,\frac{x}{\e}) dx dt d\mathbb{P} =\int_\Omega\int_0^T\int_D\int_Y u(\omega,t,x,y) \Psi(\omega,t,x,y) dy dx dt d\mathbb{P}.
\end{equation*}
\end{definition}

\begin{theorem}
\label{2scex}
Assume that the sequence $u^\e$ is uniformly bounded in $L^2(\Omega\times [0,T]\times D)^2$. Then there exists a subsequence $(u^{\e'})_{\e' >0}$ and $u^0 \in L^2(\Omega\times [0,T]\times D \times Y)^2 $ such that $u^{\e'}$ two-scale converges to $u^0$ in $\Omega\times [0,T] \times D$.
\end{theorem}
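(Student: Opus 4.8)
The plan is to follow the classical functional-analytic route to two-scale compactness, adapted to carry the additional parameters $\omega$ and $t$, which play only the role of inert measure-space variables. Set $\mathcal{T}:=L^2(\Omega\times[0,T]\times D; C_\#(Y))^2$, which is a separable Banach space, and note that for $\Psi\in\mathcal{T}$ the trace $\Psi(\omega,t,x,x/\e)$ is measurable and square-integrable on $\Omega\times[0,T]\times D$. For each $\e>0$ define the linear functional $L_\e$ on $\mathcal{T}$ by
\[
L_\e(\Psi):=\int_\Omega\int_0^T\int_D u^\e(\omega,t,x)\,\Psi\!\left(\omega,t,x,\tfrac{x}{\e}\right)\,dx\,dt\,d\mathbb{P}.
\]
First I would establish the key oscillation (mean-value) lemma: for every $\Psi\in\mathcal{T}$,
\[
\lim_{\e\to 0}\int_\Omega\int_0^T\int_D\Big|\Psi\big(\omega,t,x,\tfrac{x}{\e}\big)\Big|^2 dx\,dt\,d\mathbb{P}=\int_\Omega\int_0^T\int_D\int_Y |\Psi(\omega,t,x,y)|^2\,dy\,dx\,dt\,d\mathbb{P}.
\]
Since $\Omega$ and $[0,T]$ carry no oscillation, after Fubini this reduces to the deterministic statement that $b(x/\e)$ has $L^2(D)$-norm converging to the $Y$-average of $|b|^2$ for $b\in C_\#(Y)$; this is proved first for tensor products $a(\omega,t,x)\,b(y)$ with $b$ continuous and $Y$-periodic, where periodicity and a Riemann--Lebesgue argument give the convergence, and then extended by density of finite sums of such tensors in $\mathcal{T}$ together with dominated convergence in the $(\omega,t,x)$ variables. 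The main obstacle is precisely this lemma: one must verify that the density argument and the passage to the limit survive the presence of the probability measure and of time, which is the adaptation of the periodic theory to the stochastic setting carried out in the spirit of \cite{BMW94}.

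Granting the lemma, Cauchy--Schwarz gives $|L_\e(\Psi)|\le \|u^\e\|_{L^2(\Omega\times[0,T]\times D)^2}\,\|\Psi(\cdot,\cdot/\e)\|_{L^2(\Omega\times[0,T]\times D)^2}$, and the uniform bound on $u^\e$ together with the lemma (and $\|\Psi\|_{L^2(\Omega\times[0,T]\times D\times Y)^2}\le\|\Psi\|_{\mathcal{T}}$, since $|Y|=1$) makes the right-hand side bounded uniformly in $\e$ by a constant times $\|\Psi\|_{\mathcal{T}}$. Hence $(L_\e)$ is bounded in $\mathcal{T}'$. By separability of $\mathcal{T}$ I would extract, via a diagonal argument over a countable dense subset and then a Banach--Steinhaus argument, a subsequence $(\e')$ along which $L_{\e'}(\Psi)\to L_0(\Psi)$ for every $\Psi\in\mathcal{T}$, with $L_0\in\mathcal{T}'$ linear.

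Finally I would identify $L_0$ with an $L^2$ function on the product space. Passing to the limit in the upper bound using the lemma once more, the limit functional satisfies
\[
|L_0(\Psi)|\le C\,\|\Psi\|_{L^2(\Omega\times[0,T]\times D\times Y)^2}
\]
for all $\Psi\in\mathcal{T}$. Since $\mathcal{T}$ is dense in the Hilbert space $L^2(\Omega\times[0,T]\times D\times Y)^2$, the functional $L_0$ extends uniquely to a bounded linear functional on that Hilbert space, and the Riesz representation theorem produces $u^0\in L^2(\Omega\times[0,T]\times D\times Y)^2$ with $L_0(\Psi)=\langle u^0,\Psi\rangle$. Unwinding the definitions, this says exactly that $u^{\e'}\overset{2-s}{\longrightarrow}u^0$ in $\Omega\times[0,T]\times D$, which completes the proof.
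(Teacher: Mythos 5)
Your proof is correct and is essentially the argument the paper relies on: Theorem \ref{2scex} is stated without proof, with the reader referred to \cite{A-2s}, \cite{A95}, \cite{JL13} (and the adaptation in \cite{maris}), and the classical route there is exactly yours --- the oscillation lemma for admissible test functions in $L^2(\cdot;C_\#(Y))$, a uniform bound and weak-$*$ sequential compactness for the functionals $L_\e$ in the dual of that separable space, and identification of the limit via the improved $L^2(\Omega\times[0,T]\times D\times Y)$ bound plus density and Riesz representation. Treating $\omega$ and $t$ as inert measure-space parameters, as you do, is precisely the adaptation the authors describe, so there is nothing to add.
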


\begin{cor}
\label{2sc-w}
Assume the sequence $u^\e\in L^2(\Omega\times [0,T]\times D)^2$, two-scale converges to $u^0 \in L^2(\Omega\times [0,T]\times D \times Y)^2$. Then, $u^\e$ converges weakly in $L^2(\Omega\times [0,T]\times D)^2$ to $\displaystyle\int_Y u^0(\omega,t,x,y) dy$.
\end{cor}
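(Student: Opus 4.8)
The plan is to establish Corollary \ref{2sc-w} as a direct consequence of the two-scale convergence hypothesis by making a careful choice of test function in Definition \ref{2scdef}. The statement asserts that two-scale convergence is stronger than (or at least implies) ordinary weak convergence in $L^2(\Omega\times[0,T]\times D)^2$, with the weak limit obtained by averaging the two-scale limit over the cell $Y$.

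\emph{First step: specialize the test function.} The definition of two-scale convergence allows oscillating test functions $\Psi(\omega,t,x,\frac{x}{\e})$ with $\Psi \in L^2(\Omega\times[0,T]\times D; C_{\#}(Y))^2$. The key observation is that any test function which does \emph{not} oscillate, i.e.\ one that is independent of the fast variable $y$, is a legitimate special case. So I would fix an arbitrary $\varphi \in L^2(\Omega\times[0,T]\times D)^2$ and apply Definition \ref{2scdef} with $\Psi(\omega,t,x,y):=\varphi(\omega,t,x)$, viewed as a constant (hence trivially $Y$-periodic and continuous) function of $y$. This is admissible precisely because constants lie in $C_{\#}(Y)$.

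\emph{Second step: evaluate both sides.} With this choice, the left-hand side of the two-scale limit becomes
\begin{equation*}
\lim_{\e\to 0}\int_\Omega\int_0^T\int_D u^\e(\omega,t,x)\,\varphi(\omega,t,x)\,dx\,dt\,d\mathbb{P},
\end{equation*}
which is exactly the $L^2(\Omega\times[0,T]\times D)^2$ inner product of $u^\e$ against $\varphi$. On the right-hand side, since $\Psi$ does not depend on $y$, the inner $Y$-integral factors and yields
\begin{equation*}
\int_\Omega\int_0^T\int_D\Big(\int_Y u^0(\omega,t,x,y)\,dy\Big)\varphi(\omega,t,x)\,dx\,dt\,d\mathbb{P},
\end{equation*}
using that $|Y|=1$. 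Equating the two shows that the inner product of $u^\e$ against $\varphi$ converges to the inner product of $\int_Y u^0(\cdot,y)\,dy$ against $\varphi$, for every $\varphi$ in the Hilbert space, which is the definition of weak convergence to $\int_Y u^0\,dy$.

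\emph{The main point to verify} — and the only place requiring a word of care rather than obstacle — is that the averaged limit $\int_Y u^0(\omega,t,x,y)\,dy$ genuinely lies in $L^2(\Omega\times[0,T]\times D)^2$, so that weak convergence is asserted in the correct space; this follows from $u^0\in L^2(\Omega\times[0,T]\times D\times Y)^2$ together with the Cauchy--Schwarz (Jensen) inequality on the bounded cell $Y$, giving $\|\int_Y u^0\,dy\|_{L^2}\le \|u^0\|_{L^2(\cdots\times Y)}$. I expect no substantive difficulty here; the entire argument is a one-line specialization of the test function, and the proof is essentially immediate once one recognizes that $y$-independent test functions are permitted.
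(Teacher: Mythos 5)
Your proof is correct and is precisely the standard argument: the paper itself omits the proof of Corollary \ref{2sc-w}, referring to \cite{A-2s}, \cite{A95}, \cite{JL13}, and the proof given there is exactly your one-line specialization of Definition \ref{2scdef} to $y$-independent test functions $\Psi(\omega,t,x,y)=\varphi(\omega,t,x)$, which is admissible since constants belong to $C_{\#}(Y)$ and $|Y|=1$. Your closing verification that $\int_Y u^0\,dy \in L^2(\Omega\times[0,T]\times D)^2$ via Cauchy--Schwarz is the right (and only) point needing care, so nothing is missing.
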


We will now define the notion of two scale convergence on periodic surfaces, as introduced in \cite{A95}. 

\begin{definition}
\label{2scdef1}
We say that a sequence $v^\e \in L^2(\Omega\times [0,T]\times \partial O^\e)^2$ two-scale converges to $v \in L^2(\Omega\times [0,T]\times D \times \partial O)^2$, and denote this convergence by
$$v^\e \overset{2-s}{\longrightarrow} v\ \ \ \mbox{ in } \Omega\times [0,T]\times D,$$
if for every $\Psi \in L^2(\Omega\times [0,T]; C(\overline{D};C_{\#} (Y))^2$ we have
\begin{equation*}
\begin{split}
\lim_{\e \to 0} &\int_\Omega\int_0^T \int _{\partial O^\e} \e v^\e (\omega,t,x) \Psi(\omega,t,x,\frac{x}{\e}) d\sigma(x) dt d\mathbb{P} \\
&=\int_\Omega\int_0^T\int_D\int_{\partial O} v(\omega,t,x,y) \Psi(\omega,t,x,y) d\sigma(y) dx dt d\mathbb{P}.
\end{split}
\end{equation*}
\end{definition}

\begin{theorem}
\label{2scex1}
Assume that the sequence $\sqrt{\e}v^\e$ is uniformly bounded in $L^2(\Omega\times [0,T]\times \partial O^\e)^2$. Then there exists a subsequence $(v^{\e'})_{\e' >0}$ and $v^0 \in L^2(\Omega\times [0,T]\times D \times \partial O)^2 $ such that $v^{\e'}$ two-scale converges to $v^0$ in $\Omega\times [0,T] \times D$.
\end{theorem}

\begin{theorem}
\label{2scgrad}
Let for any $\e > 0$, $u^\e \in L^2(\Omega\times [0,T] ; H^1(D^\e)^2)$ such that 
$$\sup_\e || u^\e ||_{L^2(\Omega\times [0,T] ; L^2(D^\e)^2)} < \infty,$$
and
$$\sup_\e \e || \nabla u^\e ||_{L^2(\Omega\times [0,T] ; L^2(D^\e)^{2\times 2})} < \infty.$$
Then
$$\sup_\e \sqrt{\e}|| u^\e ||_{L^2(\Omega\times [0,T] ; L^2(\partial O^\e)^2)} < \infty.$$
Also, if we extend by $0$ inside $O^\e$ the sequences $(u^\e)_{\e >0}$ and $(\nabla u^\e)_{\e >0}$, and denote these extensions by $\widetilde{u^\e}$ and $\widetilde{\nabla u^\e}$, then there exists $u^0 \in L^2(\Omega\times [0,T]; L^2(D;H_\#^1(Y^*)^2)$ such that
$$\widetilde{u}^\e (\omega,t, x) \overset{2-s}{\longrightarrow} u^0 (\omega,t,x,y) \mathds{1}_{Y^*}(y),$$
$$\e \widetilde{\nabla u}^\e (\omega,t, x) \overset{2-s}{\longrightarrow} \nabla_y u^0(\omega,t,x,y) \mathds{1}_{Y^*}(y),$$
and
$$u^\e_{|\partial O^\e} (\omega,t, x) \overset{2-s}{\longrightarrow}  u^0_{|\partial O}(\omega,t,x,y).$$
\end{theorem}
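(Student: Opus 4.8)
The plan is to prove the statement in three stages: first a uniform boundary trace estimate, obtained by rescaling each periodicity cell to the reference cell $Y^*$ and invoking the Sobolev trace inequality there; then the extraction, along a common subsequence, of the three two-scale limits; and finally their identification, which I expect to be the analytic heart of the argument.

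\emph{Stage (i): the trace bound.} I would fix $(\omega,t)$ and argue cell by cell. For $k\in\mathcal{K}^\e$ put $w(y)=u^\e(\e(k+y))$ on $Y^*$, so that $\nabla_y w=\e(\nabla u^\e)(\e(k+\cdot))$. The trace inequality $\|w\|_{L^2(\partial O)}^2\le C(\|w\|_{L^2(Y^*)}^2+\|\nabla_y w\|_{L^2(Y^*)}^2)$, valid since $\partial O$ is smooth and $Y^*$ Lipschitz, rescales (using $d\sigma(x)=\e\,d\sigma(y)$ and $dx=\e^2\,dy$ in $\R^2$) to
\[
\e\int_{\partial O^{\e,k}}|u^\e|^2\,d\sigma\le C\Big(\int_{Y^{*,\e,k}}|u^\e|^2\,dx+\e^2\int_{Y^{*,\e,k}}|\nabla u^\e|^2\,dx\Big).
\]
Summing over $k\in\mathcal{K}^\e$ and integrating over $\Omega\times[0,T]$ would yield exactly the asserted bound on $\sqrt{\e}\,u^\e_{|\partial O^\e}$, the right-hand side being controlled by the two standing hypotheses.

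\emph{Stage (ii): extraction.} Since $\wt{u}^\e$ and $\e\,\wt{\nabla u}^\e$ are bounded in $L^2(\Omega\times[0,T]\times D)$, Theorem \ref{2scex} gives, along a subsequence, two-scale limits $u^0$ and $\xi^0$; by the bound from Stage (i) and Theorem \ref{2scex1}, $u^\e_{|\partial O^\e}$ two-scale converges, on the same subsequence, to some $v^0$. Testing against $\Psi$ whose $y$-support lies in $O$ shows, because $\wt{u}^\e$ and $\wt{\nabla u}^\e$ vanish on $O^\e$, that $u^0=u^0\mathds{1}_{Y^*}$ and $\xi^0=\xi^0\mathds{1}_{Y^*}$.

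\emph{Stage (iii): identification.} For a matrix-valued $\Psi(\omega,t,x,y)$, smooth and $Y$-periodic in $y$ with compact $x$-support in $D$, I would integrate by parts on $D^\e$ the quantity $\int_{D^\e}\e\,\partial_j u^\e_i\,\Psi_{ij}(x,x/\e)\,dx$, using $\partial_{x_j}[\Psi_{ij}(x,x/\e)]=(\partial_{x_j}\Psi_{ij})+\e^{-1}(\partial_{y_j}\Psi_{ij})$. Passing to the two-scale limit (the term still carrying a factor $\e$ drops out, the interior term is handled by the limit of $\wt{u}^\e$, and the $\partial O^\e$ boundary term by the surface limit $v^0$) should give
\[
\int\!\!\int_{Y^*}\xi^0:\Psi=-\int\!\!\int_{Y^*}u^0\cdot\mathrm{div}_y\Psi+\int\!\!\int_{\partial O}v^0\cdot(\Psi n),
\]
the outer integral being over $\Omega\times[0,T]\times D$. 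Restricting first to $\Psi$ vanishing near $\partial O$ kills the surface term and identifies $\xi^0=\nabla_y u^0$ distributionally, whence $u^0\in L^2(\Omega\times[0,T];L^2(D;H^1_\#(Y^*)^2))$. Feeding this back and applying Green's formula on $Y^*$ (the $\partial Y$ contribution cancelling by periodicity) reduces the full identity to $\int\!\!\int_{\partial O}(u^0_{|\partial O}-v^0)\cdot(\Psi n)=0$; since $\Psi_{ij}n_j$ can be made an arbitrary field on $\partial O$, this forces $v^0=u^0_{|\partial O}$, completing the three convergences. I expect Stage (iii) to be the main obstacle, precisely because it must keep track of the distinction between $\wt{\nabla u}^\e$, the extension by zero of the gradient, and $\nabla\wt{u}^\e$, which carries an extra singular contribution on $\partial O^\e$: the whole point of the integration-by-parts route is that it involves only $\wt{\nabla u}^\e$, while the surface term supplies exactly the data needed to recover both $\xi^0=\nabla_y u^0$ and the trace relation $v^0=u^0_{|\partial O}$.
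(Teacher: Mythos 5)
Your proposal is correct, and since the paper's own ``proof'' of Theorem \ref{2scgrad} is simply the citation ``See \cite{A95}'', your three-stage argument --- the cell-by-cell rescaling to $Y^*$ with the reference trace inequality (using $d\sigma(x)=\e\,d\sigma(y)$, $dx=\e^2\,dy$), extraction via Theorems \ref{2scex} and \ref{2scex1}, and identification of $\xi^0=\nabla_y u^0$ and $v^0=u^0_{|\partial O}$ by integrating $\int_{D^\e}\e\,\nabla u^\e:\Psi(x,x/\e)$ by parts so that only $\widetilde{\nabla u}^\e$ and the $\partial O^\e$ surface term appear --- is precisely the standard Allaire--Damlamian proof being invoked, correctly adapted to the extra parameters $(\omega,t)$, which enter only as parameters of the measure space. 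The only detail worth adding is that test functions with $y$-support in $O$ (Stage (ii)) meet unperforated cells in the $O(\e)$-measure boundary layer along $\partial D$, whose contribution vanishes by Cauchy--Schwarz and the uniform $L^2$ bound, so the localization $u^0=u^0\mathds{1}_{Y^*}$ survives.
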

\begin{proof}
See \cite{A95}.
\end{proof}
\begin{theorem}
\label{2scint}
Assume that the sequence $u^\e$ two scale converges to $u\in L^2(\Omega\times[0,T] \times D\times Y)^2$. Then the sequence $U^\e(\omega,t,x)$ defined by
$$U^\e(\omega,t,x) = \int_0^t u^\e(\omega, s, x) ds$$
two scale converges to $\displaystyle\int_0^t u(\omega, s, x,y) ds$.
\end{theorem}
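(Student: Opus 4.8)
The plan is to test $U^\e$ against an arbitrary admissible test function and to reduce, via Fubini's theorem in the two time variables, to the two-scale convergence already assumed for $u^\e$. First I would check that the statement is meaningful by recording the relevant $L^2$ bounds. By the Cauchy--Schwarz (or Jensen) inequality, $|U^\e(\omega,t,x)|^2\le T\int_0^T|u^\e(\omega,s,x)|^2\,ds$, so that $\|U^\e\|_{L^2(\Omega\times[0,T]\times D)^2}\le T\,\|u^\e\|_{L^2(\Omega\times[0,T]\times D)^2}$; the identical estimate shows that the candidate limit $U(\omega,t,x,y):=\int_0^t u(\omega,s,x,y)\,ds$ belongs to $L^2(\Omega\times[0,T]\times D\times Y)^2$. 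Hence both sides of the convergence $U^\e\overset{2-s}{\longrightarrow}U$ are well defined objects in the sense of Definition \ref{2scdef}.

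Next I would fix an arbitrary $\Psi\in L^2(\Omega\times[0,T]\times D;C_{\#}(Y))^2$, substitute the definition of $U^\e$ into the two-scale pairing, write the triangular region $\{0\le s\le t\le T\}$ using the indicator $\mathds{1}_{\{s\le t\}}$, and interchange the order of the $s$- and $t$-integrations. Setting $\wt\Psi(\omega,s,x,y):=\int_s^T\Psi(\omega,t,x,y)\,dt$, this interchange yields
\[
\int_\Omega\int_0^T\int_D U^\e(\omega,t,x)\,\Psi\!\left(\omega,t,x,\tfrac{x}{\e}\right)dx\,dt\,d\mathbb{P}
=\int_\Omega\int_0^T\int_D u^\e(\omega,s,x)\,\wt\Psi\!\left(\omega,s,x,\tfrac{x}{\e}\right)dx\,ds\,d\mathbb{P}.
\]

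The key step, and the only genuinely nontrivial one, is to verify that $\wt\Psi$ is again an admissible test function, that is $\wt\Psi\in L^2(\Omega\times[0,T]\times D;C_{\#}(Y))^2$. For a.e. fixed $(\omega,s,x)$ the map $y\mapsto\wt\Psi(\omega,s,x,y)$ is continuous and $Y$-periodic, being the $t$-integral of the continuous $Y$-periodic functions $\Psi(\omega,t,x,\cdot)$; moreover $\|\wt\Psi(\omega,s,x,\cdot)\|_{C_{\#}(Y)}\le\int_0^T\|\Psi(\omega,t,x,\cdot)\|_{C_{\#}(Y)}\,dt$. Squaring, applying Cauchy--Schwarz in $t$, and integrating over $(\omega,s,x)$ gives $\|\wt\Psi\|^2_{L^2(\Omega\times[0,T]\times D;C_{\#}(Y))^2}\le T^2\|\Psi\|^2_{L^2(\Omega\times[0,T]\times D;C_{\#}(Y))^2}<\infty$, so $\wt\Psi$ is admissible. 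With this established, I would apply the hypothesis $u^\e\overset{2-s}{\longrightarrow}u$ to the test function $\wt\Psi$ to pass to the limit in the right-hand side above, and then reverse the Fubini interchange on the resulting expression to recognize the limit as
\[
\int_\Omega\int_0^T\int_D\int_Y U(\omega,t,x,y)\,\Psi(\omega,t,x,y)\,dy\,dx\,dt\,d\mathbb{P}.
\]
Since $\Psi$ was arbitrary, this is precisely Definition \ref{2scdef} for $U^\e\overset{2-s}{\longrightarrow}U$, completing the argument. The main obstacle is thus the admissibility check for $\wt\Psi$; everything else is bookkeeping with Fubini's theorem and the elementary time-integral estimates.
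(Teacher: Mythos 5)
Your proof is correct, but it takes a genuinely different route from the paper. The paper's proof keeps the time integral on $u^\e$: it first tests with product functions $\mathds{1}_{[0,t]}(s)\Psi_1(\omega,x,y)$ to obtain, for a.e.\ fixed $t$, the convergence of $h^\e(t)=\int_\Omega\int_D U^\e(\omega,t,x)\Psi_2(\omega,t,x,\tfrac{x}{\e})\,dx\,d\mathbb{P}$ to its two-scale limit $h(t)$, then dominates $h^\e(t)$ via H\"older and concludes by dominated convergence in $t$. You instead move the time integration onto the test function: by Fubini on the triangle $\{s\le t\}$, pairing $U^\e$ against $\Psi$ equals pairing $u^\e$ against $\wt\Psi(\omega,s,x,y)=\int_s^T\Psi(\omega,t,x,y)\,dt$ (the adjoint of the Volterra operator), and the single nontrivial step is checking $\wt\Psi\in L^2(\Omega\times[0,T]\times D;C_{\#}(Y))^2$, which your Bochner-integral estimate $\|\wt\Psi(\omega,s,x,\cdot)\|_{C_{\#}(Y)}\le\int_0^T\|\Psi(\omega,t,x,\cdot)\|_{C_{\#}(Y)}\,dt$ handles correctly. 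Your route buys two things: it needs only one application of the two-scale hypothesis with a fixed admissible test function, so it avoids the a.e.-in-$t$ argument entirely, and it does not require a uniform bound on $\|u^\e\|_{L^2(\Omega\times[0,T]\times D)^2}$ for a dominating function (the paper's dominated-convergence step implicitly uses such a bound, which is true but unstated, following from the uniform boundedness principle; also the paper's domination should really involve the $L^2(\Omega\times D;C_{\#}(Y))$-norm of $\Psi_2(t)$ rather than the $L^2(\Omega\times D\times Y)$-norm, an informality your argument sidesteps). The paper's approach, in turn, yields the slightly stronger intermediate fact that the pairing converges at a.e.\ fixed time $t$, which your duality argument does not produce.
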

\begin{proof}
We have for the sequence $u^\e$ that
\begin{equation*}
\lim_{\e \to 0} \int_\Omega\int_0^T \int _D u^\e (\omega,t,x) \Psi(\omega,t,x,\frac{x}{\e}) dx dt d\mathbb{P} =\int_\Omega\int_0^T\int_D\int_Y u(\omega,t,x,y) \Psi(\omega,t,x,y) dy dx dt d\mathbb{P},
\end{equation*}
for every $\Psi \in L^2(\Omega\times [0,T]\times D; C_{\#} (Y))^2$. Now if we choose $\Psi$ to be of the form
$$\Psi (\omega,s,x,y) = \mathds{1}_{[0,t]}(s)\Psi_1(\omega,x,y),$$
we obtain that
\begin{equation*}
\lim_{\e \to 0} \int_\Omega\int_0^t \int _D u^\e (\omega,s,x) \Psi_1(\omega,x,\frac{x}{\e}) dx d\mathbb{P} =\int_\Omega\int_0^t\int_D\int_Y u(\omega,s,x,y) \Psi_1(\omega,x,y) dy dx d\mathbb{P},
\end{equation*}
for any fixed $t\in[0,T]$ and any $\Psi_1 \in L^2(\Omega \times D; C_{\#}(Y))^2$. As a consequence, for $\Psi_2 \in L^2(\Omega\times [0,T]\times D; C_{\#} (Y))^2$ the sequence
$$h^\e(t) = \int_\Omega\int_0^t \int _D u^\e (\omega,s,x) \Psi_2(\omega,t,x,\frac{x}{\e}) dx ds d\mathbb{P} = \int_\Omega \int _D U^\e (\omega,t,x) \Psi_2(\omega,t,x,\frac{x}{\e}) dx d\mathbb{P}$$
is convergent for almost every $t\in[0,T]$ to
$$h(t) = \int_\Omega\int_0^t\int_D\int_Y u(\omega,s,x,y) \Psi_2(\omega,t,x,y) dy dx ds d\mathbb{P} = \int_\Omega\int_D\int_Y U(\omega,t,x,y) \Psi_2(\omega,t,x,y) dy dx d\mathbb{P}.$$
By applying H{\"o}lder's inequality,
$$h^\e(t) \leq ||u^\e||_{L^2(\Omega\times[0,T]\times D)^2} ||\Psi_2(t)||_{L^2(\Omega\times D \times Y)^2}.$$
We use the dominated convergence theorem to deduce that
$$\int_0^T h^\e(t) dt \to \int_0^T h(t) dt.$$
\end{proof}

\section{The convergence to the macroscopic problem}
\label{section5}
In this section we will use the variational formulation \eqref{varu} for $u$ to pass to the limit in the two scale convergence sense. For this, we need first to obtain the required estimates.

\subsection{Passage to the limit}

As a consequence of the assumption \eqref{u0}, we apply theorems \ref{2scex}, \ref {2scex1}  and \ref{2scgrad} to obtain the two-scale convergence
\begin{equation}
\label{2s1}
\widetilde{u}_0^\e (\omega,x) \overset{2-s}{\longrightarrow} u_0 (\omega,x,y)\ \ \ \mbox{ in } \Omega\times D,
\end{equation}
\begin{equation}
\label{2s11}
v_0^\e (\omega,x) \overset{2-s}{\longrightarrow} v_0 (\omega,x,y)\ \ \ \mbox{ in } \Omega\times D,
\end{equation}
for $u_0 \in L^2(\Omega;L^2(D;H^1_\#(Y))^2$, such that a.e. $x\in D$, $u_0(x,\cdot)\equiv 0$ in $O$, and for $v_0 =(u_0)_{\partial O}\in L^2(\Omega;L^2(D;L^2(\partial O))^2$.
From the estimates \eqref{estU} we get also that
\begin{equation}
\label{2s2}
\widetilde{u}^\e (\omega,t,x) \overset{2-s}{\longrightarrow} u^*(\omega,t,x,y)\ \ \ \mbox{ in } \Omega \times [0,T] \times D,
\end{equation}
for $u^* \in L^2(\Omega \times [0,T] \times D;L^2_\#(Y)^2) \cap L^2(\Omega \times [0,T] \times D;H^1_\#(Y^*)^2)$. Also, it follows as a consequence of the properties of the sequence $u^\e$ that a.e. $\omega\in\Omega$, for every $t\in[0,T]$ and a.e. $x\in D$ we have:
\begin{equation}\label{u*1}
u^*(\omega,t,x,\cdot) =0 \mbox{ in } O,
\end{equation}
\begin{equation}\label{u*2}
\operatorname{div}_y u^*(\omega,t,x,\cdot) = 0 \mbox{ in } Y,
\end{equation}
and a.e. $\omega\in\Omega$ and for every $t\in[0,T]$:
\begin{equation}\label{u*3}
\operatorname{div}_x \left( \int_Y u^*(\omega,t,x,y) dy\right) =0 \mbox{ in } D.
\end{equation}

\begin{equation}
\label{2s3}
\e \widetilde{\nabla u}^\e (\omega,t,x) \overset{2-s}{\longrightarrow} \xi(\omega,t,x,y)\ \ \ \mbox{ in } \Omega \times [0,T] \times D,
\end{equation}
for $\xi \in L^2(\Omega \times [0,T] \times D;L^2_\#(Y)^{2\times 2})$ such that
\begin{equation}
\xi(\omega,t,x,y)=
\left\{
\begin{array}{ll}
\nabla_y u^*(\omega,t,x,y)& \mbox{ for } y\in Y^*\\
0& \mbox{ for } y\in O
\end{array}
\right.
\end{equation}

\begin{equation}
\label{2s4}
\widetilde{U}^\e (\omega,t,x)ds \overset{2-s}{\longrightarrow} \int_0^t u^*(\omega,s,x,y)\ \ \ \mbox{ in } \Omega \times [0,T] \times D,
\end{equation}

\begin{equation}
\label{2s5}
\e \widetilde{\nabla U}^\e (\omega,t,x) ds \overset{2-s}{\longrightarrow} \int_0^t \xi(\omega,s,x,y) ds\ \ \ \mbox{ in } \Omega \times [0,T] \times D.,
\end{equation}
where $U(\omega,t,x)=\int_0^t u(\omega,s,x) ds$.
In the variational formulation \eqref{varu}, we use a test function $\phi (\omega,t,x,\dfrac{x}{\e})$ of the following form $\phi (\omega,t,x,y) = \phi_1 (\omega) \phi_2 (t) \phi_3(x,\dfrac{x}{\e})$, with $\phi_1 \in L^\infty(\Omega)$, $\phi_2 \in C_0^\infty(0,T)$ and $\phi_3 \in C_0^\infty(D;C^\infty_{\#}(Y^*))^2$ such that:
$$\phi_3 (x,\cdot) \equiv 0\mbox{ in } O,$$
$$\operatorname{div}_y \phi_3 (x,\cdot) \equiv 0 \mbox{ in } Y,$$
$$\operatorname{div}_x \left( \int_Y \phi_3 (x,y)dy\right) \equiv 0 \mbox{ in } D,$$
integrate with respect to $\omega\in\Omega$ and $t\in [0,T]$ and obtain:

\begin{equation}\label{varfor3}
\begin{split}
&\int_\Omega\int_0^T\int_{D^\e} \left(u^\e(\omega,t,x)-u^\e_0(\omega,x)-\int_{0}^{t}f(s,x)-\int_{0}^{t}g_{1}(s)dW_{1}(s)\right) \phi(\omega,t,x,\dfrac{x}{\e}) dxdt d\mathbb{P}\\
+&\int_\Omega\int_0^T\int_{D^\e}\int_0^t\nu \e^2\nabla u^\e(\omega,s,x) \nabla\phi (\omega,t,x,\dfrac{x}{\e}) dsdxdtd\mathbb{P}\\
=&\int_\Omega\int_0^T\int_{\partial O^\e} \left(-\e u^\e(\omega,t,x)+\e v^\e_0(\omega,x) +\int_{0}^{t} \e g_{2}^\e(s)dW_{2}(s) ds 
\right) \phi(\omega,t,x,\dfrac{x}{\e}) d\sigma dt d\mathbb{P}\\
-& \int_\Omega\int_0^T\int_{\partial O^\e}\int_0^t \e \alpha(\dfrac{x}{\e}) u^\e(\omega,s,x)  \phi(\omega,t,x,\dfrac{x}{\e})  d\sigma dt d\mathbb{P}.
\end{split}
\end{equation}
In \eqref{varfor3} we pass to the limit term by term. From \eqref{2s2}, \eqref{2s1} and \eqref{2s11}we get
\begin{equation}
\label{lim1}
\lim_{\e\to 0} \int_\Omega\int_0^T\int_{D^\e} u^\e(\omega,t,x) \phi(\omega,t,x,\dfrac{x}{\e}) dxdt d\mathbb{P} =\int_\Omega\int_0^T\int_{D}\int_Y u^* (\omega,t,x,y) \phi(\omega,t,x,y) dy dx dt d\mathbb{P},
\end{equation}
\begin{equation}
\label{lim2}
\lim_{\e\to 0} \int_\Omega\int_0^T\int_{D^\e} u^\e_0(\omega,x) \phi(\omega,t,x,\dfrac{x}{\e}) dxdt d\mathbb{P} =\int_\Omega\int_0^T\int_{D}\int_Y u_0 (\omega,x,y) \phi(\omega,t,x,y) dy dx dt d\mathbb{P},
\end{equation}
and
\begin{equation}
\label{lim7}
\lim_{\e\to 0} \int_\Omega\int_0^T\int_{\partial O^\e} \e v^\e_0(\omega,x) \phi(\omega,t,x,\dfrac{x}{\e}) d\sigma dt d\mathbb{P} = \int_\Omega \int_0^T \int_{\partial O} v_0(\omega,t,x,y) \phi(\omega,t,x,y) d\sigma(y) dtd\mathbb{P}
\end{equation}
We use formula (5.5) from \cite{A-2s} for a fixed fix $\omega\in\Omega$ and $t\in[0,T]$ to get that the sequence of integrals $$\int_{D^\e} \int_{0}^{t}f(s,x) \phi(\omega,t,x,\dfrac{x}{\e}) dx$$ converges to $\displaystyle\int_{D}\int_{Y^*} \int_{0}^{t}f(s,x) ds \phi(\omega,t,x,y) dy dx$. We apply now Vitali's theorem to obtain:
\begin{equation}
\label{lim3}
\begin{split}
&\lim_{\e\to 0} \int_\Omega\int_0^T\int_{D^\e} \int_{0}^{t}f(s,x)ds \phi(\omega,t,x,\dfrac{x}{\e}) dxdt d\mathbb{P} \\
&=\int_\Omega\int_0^T\int_{D}\int_{Y^*} \int_{0}^{t}f(s,x) ds \phi(\omega,t,x,y) dy dx dt d\mathbb{P}.
\end{split}
\end{equation}

We use \eqref{2s5} to get
\begin{equation}
\label{lim5}
\begin{split}
&\lim_{\e\to 0} \int_\Omega\int_0^T\int_{D^\e}\int_0^t\nu \e^2\nabla u^\e(\omega,s,x) \nabla\phi (\omega,t,x,\dfrac{x}{\e}) dsdxdtd\mathbb{P} =\\
&\lim_{\e\to 0}\int_\Omega\int_0^T\int_{D^\e}\nu \e^2\nabla u^\e(\omega,t,x) \nabla\phi (\omega,t,x,\dfrac{x}{\e}) dsdxdtd\mathbb{P}=\\
&\lim_{\e\to 0}\int_\Omega\int_0^T\int_{D^\e}\nu \e^2\nabla u^\e(\omega,t,x) \left(\nabla_x\phi (\omega,t,x,\dfrac{x}{\e}) + \dfrac{1}{\e}\nabla_y \phi (\omega,t,x,\dfrac{x}{\e})\right) dsdxdtd\mathbb{P}=\\
&\lim_{\e\to 0}\int_\Omega\int_0^T\int_{D^\e}\nu \e^2\nabla u^\e(\omega,t,x) \nabla_x\phi (\omega,t,x,\dfrac{x}{\e})+\lim_{\e\to 0}\int_\Omega\int_0^T\int_{D^\e}\nu \e\nabla u^\e(\omega,t,x) \nabla_y\phi (\omega,t,x,\dfrac{x}{\e})=\\
&\int_\Omega\int_0^T \int_D \nu \int_0^t \xi(\omega,s,x,y) ds \nabla_y\phi (\omega,t,x,\dfrac{x}{\e}).
\end{split}
\end{equation}
We will now compute the limit of the boundary integrals and the stochastic ones. The results are proven in the next Lemmas.
\begin{lemma}\label{lemmalim6}
\begin{equation}
\label{lim6}
\begin{split}
&\lim_{\e\to 0} \int_\Omega\int_0^T\int_{\partial O^\e} \e u^\e(\omega,t,x) \phi(\omega,t,x,\dfrac{x}{\e}) d\sigma dt d\mathbb{P} \\
&= \int_\Omega \int_0^T \int_D \int_{\partial O} u^*(\omega,t,x,y) \phi(\omega,t,x,y) d\sigma(y) dx dtd\mathbb{P}.
\end{split}
\end{equation}

\end{lemma}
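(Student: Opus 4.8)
The plan is to recognize that the claimed identity is nothing more than the surface two-scale convergence of the boundary trace $u^\e|_{\partial O^\e}$ tested against the particular function $\phi$, a convergence already supplied by Theorem \ref{2scgrad}. The first step is therefore to verify that $u^\e$ satisfies the hypotheses of that theorem. The uniform estimate \eqref{estU} gives
$$\E\int_0^T \|u^\e(t)\|^2_{L^2(D^\e)^2}\, dt \leq T\,\E\sup_{0\leq t\leq T}\|\mf{U}^\e(t)\|^2_{\mf{H}^\e} \leq C_T,$$
and, since $\|\mf{U}^\e\|^2_{\mf{V}^\e}$ dominates $\|\nabla u^\e\|^2_{L^2(D^\e)^{2\times 2}}$,
$$\e^2\,\E\int_0^T \|\nabla u^\e(t)\|^2_{L^2(D^\e)^{2\times 2}}\, dt \leq C_T.$$
These are precisely the bounds $\sup_\e \|u^\e\|_{L^2(\Omega\times[0,T];L^2(D^\e)^2)}<\infty$ and $\sup_\e \e\|\nabla u^\e\|_{L^2(\Omega\times[0,T];L^2(D^\e)^{2\times 2})}<\infty$ required by Theorem \ref{2scgrad}.

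Next I would invoke Theorem \ref{2scgrad} to obtain a limit $u^0\in L^2(\Omega\times[0,T];L^2(D;H^1_\#(Y^*)^2))$ along which, simultaneously, $\wt{u}^\e \overset{2-s}{\longrightarrow} u^0\mathds{1}_{Y^*}$ in the volume and $u^\e_{|\partial O^\e} \overset{2-s}{\longrightarrow} u^0_{|\partial O}$ on the periodic surfaces. Comparing the volume convergence with \eqref{2s2} and using the uniqueness of the volume two-scale limit (Theorem \ref{2scex}), I would identify $u^0 = u^*$ on $Y^*$; since the trace on $\partial O$ is taken from the $Y^*$ side, this yields $u^0_{|\partial O} = u^*_{|\partial O}$.

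Finally I would apply Definition \ref{2scdef1} of surface two-scale convergence directly to $v^\e = u^\e_{|\partial O^\e}$ with test function $\Psi = \phi$. The function $\phi(\omega,t,x,y)=\phi_1(\omega)\phi_2(t)\phi_3(x,y)$, with $\phi_1\in L^\infty(\Omega)$, $\phi_2\in C_0^\infty(0,T)$ and $\phi_3\in C_0^\infty(D;C^\infty_\#(Y^*))^2$, lies in $L^2(\Omega\times[0,T];C(\o{D};C_\#(Y)))^2$ and is thus admissible; its trace on $\partial O$ is well defined because $\partial O\subset\o{Y^*}$ and $\phi_3$ is smooth up to $\partial O$. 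The definition then produces exactly \eqref{lim6}, the factor $\e$ on the left and the extra integral over $D$ on the right being precisely the normalization built into Definition \ref{2scdef1}.

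The only genuine subtleties — and hence the points to be careful about — are the identification $u^0=u^*$ on $Y^*$ and the admissibility of $\phi$ as a surface test function. The analytical heart of the statement, namely that boundary traces on the oscillating surfaces $\partial O^\e$ converge in the two-scale sense under the gradient bound above, is already packaged into Theorem \ref{2scgrad}, so no new compactness or trace argument is needed here; the lemma is essentially a corollary.
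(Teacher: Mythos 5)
Your proof is correct, but it follows a genuinely different route from the paper's. You obtain \eqref{lim6} by citing the trace part of Theorem \ref{2scgrad}: the hypothesis check from \eqref{estU} is valid, the identification $u^0=u^*$ on $Y^*$ (hence of the traces on $\partial O$, taken from the $Y^*$ side) is sound --- indeed \eqref{2s2} is itself produced by Theorem \ref{2scgrad}, so along the relevant subsequence $u^0$ \emph{is} $u^*$ by construction --- and testing Definition \ref{2scdef1} with $\phi$ then yields the claim. This is not circular within the paper, since Theorem \ref{2scgrad} is stated with the surface convergence $u^\e_{|\partial O^\e}\overset{2-s}{\longrightarrow}u^0_{|\partial O}$ included and its proof deferred to \cite{A95}. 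The paper, by contrast, reproves that trace fact by hand: it introduces the periodic Neumann corrector $w_1$ of \eqref{w1}, rescales it to $w_1^\e(x)=\e^2 w_1(x/\e)$ so that $\partial w_1^\e/\partial n=\e$ on $\partial O^\e$ by \eqref{w1e}, converts the surface integral into the volume integrals of $\Delta w_1^\e\,u^\e\phi$ and $\nabla w_1^\e\cdot\nabla\bigl(u^\e\phi\bigr)$ via the divergence theorem, passes to the limit using only the volume convergences \eqref{2s2}--\eqref{2s3} and $w_1\in W^{1,\infty}(Y^*)$, and integrates back by parts in $y$. What the paper's route buys is a self-contained argument in the stochastic, time-dependent setting (the cited \cite{A95} is deterministic, and the paper's extension of the two-scale framework to the parameters $(\omega,t)$ is only asserted), and, more importantly, a template reused where your shortcut is unavailable: in Lemmas \ref{lemmalim8} and \ref{lemmalim9} the boundary integrands carry the oscillating data $\mathcal{R}^\e g_{22}$ and the merely $L^\infty$ weight $\alpha(x/\e)$, which cannot be absorbed into an admissible surface test function, so the analogous correctors \eqref{wis} and \eqref{wb} are genuinely needed there. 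What your route buys is brevity for this particular lemma. One small point to tighten: Definition \ref{2scdef1} requires test functions in $L^2(\Omega\times[0,T];C(\o{D};C_\#(Y)))^2$, whereas $\phi_3\in C_0^\infty(D;C^\infty_\#(Y^*))^2$ is defined only on $\o{Y^*}$; you should state explicitly that $\phi_3$ is extended smoothly (arbitrarily) across $\partial O$ into $O$, which affects neither side of \eqref{lim6} because both surface integrals only see the values of $\phi_3$ on $\partial O^\e$, respectively $\partial O$.
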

\begin{proof}
We introduce the solution of the following problem:
\begin{equation}
\label{w1}
\left\{
\begin{array}{rll}
-\Delta w_1&= -  \dfrac{|\partial O|}{|Y^*|} &\mbox{ in }\  Y^* , \\
\displaystyle \frac{\partial w_1}{\partial n} &=1&\mbox{ on }\  \partial O, \\
\displaystyle \int_{Y^*} w_1 &=0,& \\
w_1&- Y-periodic,& \\
\end{array}
\right.
\end{equation}
define next $w_1^\e(x) = \e^2 w_1(\frac{x}{\e})$, and straightforward calculations show that $w_1^\e$ satisfies:
\begin{equation}
\label{w1e}
\left\{
\begin{array}{rll}
-\Delta w_1^\e&= -\dfrac{|\partial O|}{|Y^*|} &\mbox{ in }\  D^\e , \\
\displaystyle \frac{\partial w_1^\e}{\partial n} &=\e&\mbox{ on }\  \partial O^\e. \\
\end{array}
\right.
\end{equation}
\end{proof}
We compute next the limit as follows:
\begin{equation}\nonumber\begin{split}
&\lim_{\e\to 0} \int_\Omega\int_0^T\int_{\partial O^\e} \e u^\e(\omega,t,x) \phi(\omega,t,x,\dfrac{x}{\e}) d\sigma(x) dt d\mathbb{P}=\\
&\lim_{\e\to 0} \int_\Omega\int_0^T\int_{\partial O^\e} \displaystyle \frac{\partial w_1^\e}{\partial n}(x) u^\e(\omega,t,x) \phi(\omega,t,x,\dfrac{x}{\e}) d\sigma(x) dt d\mathbb{P}=\\
&\lim_{\e\to 0} \int_\Omega\int_0^T\int_{D^\e} \operatorname{div} \left(\nabla w_1^\e (x) u^\e(\omega,t,x) \phi(\omega,t,x,\dfrac{x}{\e})\right) dx dt d\mathbb{P}=\\
&\lim_{\e\to 0} \int_\Omega\int_0^T\int_{D^\e} \Delta w_1^\e (x) u^\e(\omega,t,x) \phi(\omega,t,x,\dfrac{x}{\e}) dxdt d\mathbb{P}+\\
&\lim_{\e\to 0} \int_\Omega\int_0^T\int_{D^\e} \nabla w_1^\e (x) \cdot \nabla \left( u^\e(\omega,t,x) \phi(\omega,t,x,\dfrac{x}{\e})\right) dx dt d\mathbb{P}.
\end{split}\end{equation}
The first limit yields, after using \eqref{2s2}:
\begin{equation}\nonumber\begin{split}
&\lim_{\e\to 0} \int_\Omega\int_0^T\int_{D^\e} \Delta w_1^\e (x) u^\e(\omega,t,x) \phi(\omega,t,x,\dfrac{x}{\e}) d\sigma(x) dt d\mathbb{P}=\\
& \lim_{\e\to 0}\int_\Omega \int_0^T \int_{D^\e} \dfrac{|\partial O|}{|Y^*|} u^\e(\omega,t,x) \phi(\omega,t,x,\dfrac{x}{\e})=\\
&\int_\Omega \int_0^T \int_D \int_Y \Delta w_1(y) u^* (\omega,t,x,y) \phi(\omega,t,x,y) dy dx dt d\mathbb{P}.
\end{split}\end{equation}
Next ,
\begin{equation}\nonumber\begin{split}
&\lim_{\e\to 0} \int_\Omega\int_0^T\int_{D^\e} \nabla w_1^\e (x) \cdot \nabla \left( u^\e(\omega,t,x) \phi(\omega,t,x,\dfrac{x}{\e})\right) dx dt d\mathbb{P}=\\
&\lim_{\e\to 0} \int_\Omega\int_0^T\int_{D^\e} \nabla w_1^\e (x) \cdot \left( \nabla u^\e(\omega,t,x) \phi(\omega,t,x,\dfrac{x}{\e}) + u^\e(\omega,t,x) \nabla \phi (\omega,t,x,\dfrac{x}{\e})\right) dx dt d\mathbb{P}=\\
&\lim_{\e\to 0} \int_\Omega\int_0^T\int_{D^\e} \nabla w_1 (\dfrac{x}{\e}) \cdot \left( \e \nabla u^\e(\omega,t,x) \phi(\omega,t,x,\dfrac{x}{\e}) + u^\e(\omega,t,x) \e \nabla \phi (\omega,t,x,\dfrac{x}{\e})\right) dx dt d\mathbb{P}.
\end{split}
\end{equation}
We use the two scale limits \eqref{2s2} and \eqref{2s3} and the fact that $w_1 \in W^{1,\infty}(Y^*)$ to obtain 
\begin{equation}\nonumber
\int_\Omega\int_0^T \int_D \int_{Y^*} \nabla w_1 (y) \cdot \left( \nabla_y u^*(\omega,t,x,y) \phi(\omega,t,x,y) + u^*(\omega,t,x,y)  \nabla_y \phi (\omega,t,x,y)\right) dy dx dt d\mathbb{P}.
\end{equation}
Adding these two results and integrating by parts, we get \eqref{lim6}.
\begin{lemma}\label{lemmalim4}
\begin{equation}
\label{lim4}
\begin{split}
&\lim_{\e\to 0} \int_\Omega\int_0^T\int_{D^\e} \int_{0}^{t}g_{1}(s)dW_{1}(s) \phi(\omega,t,x,\dfrac{x}{\e}) dxdt d\mathbb{P} =\\
&\int_\Omega\int_0^T\int_{D}\int_{Y^*} \int_{0}^{t}g_{1}(s)dW_{1}(s) \phi(\omega,t,x,y) dy dx dt d\mathbb{P}.
\end{split}
\end{equation}
\end{lemma}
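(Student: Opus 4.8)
The plan is to treat the stochastic convolution $G_1(\omega,t,x):=\int_0^t g_1(s)\,dW_1(s)$ as a \emph{fixed} element of $L^2(\Omega\times[0,T]\times D)^2$, independent of $\e$, so that the passage to the limit reduces to exactly the oscillating--test--function argument already carried out for the body force in \eqref{lim3}. The only genuinely new point, compared with the deterministic term $f$, is to confirm that the It\^o integral possesses enough integrability for that argument to apply; once this is in place, nothing stochastic remains and the limit is purely deterministic.

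First I would record the integrability of $G_1$. By the It\^o isometry together with the trace-class assumption \eqref{eqCT},
\begin{equation*}
\mathbb{E}\,\Big\|\int_0^t g_1(s)\,dW_1(s)\Big\|_{L^2(D)^2}^2=\mathbb{E}\int_0^t\|g_1(s)\|_{Q_1}^2\,ds\leq C_T\,t,
\end{equation*}
so that $\mathbb{E}\int_0^T\|G_1(t)\|_{L^2(D)^2}^2\,dt\leq C_T T<\infty$, and moreover $\mathbb{P}$-a.s. $G_1\in C([0,T];L^2(D)^2)$. In particular, for $\mathbb{P}$-a.e.\ $\omega$ and a.e.\ $t$, the map $x\mapsto G_1(\omega,t,x)$ is a fixed (in $\e$) function in $L^2(D)^2$, precisely the role played by $\int_0^t f(s,\cdot)\,ds$ in \eqref{lim3}.

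Then, fixing such $\omega$ and $t$, I would invoke formula (5.5) of \cite{A-2s} (equivalently, the fact that the admissible oscillating test function $\phi(\omega,t,x,x/\e)$ restricted to $D^\e$ converges weakly in $L^2(D)$ to $\int_{Y^*}\phi(\omega,t,x,y)\,dy$, paired against the strongly fixed function $G_1(\omega,t,\cdot)$) to obtain
\begin{equation*}
\int_{D^\e}G_1(\omega,t,x)\,\phi\Big(\omega,t,x,\frac{x}{\e}\Big)\,dx\ \longrightarrow\ \int_D\int_{Y^*}G_1(\omega,t,x)\,\phi(\omega,t,x,y)\,dy\,dx,
\end{equation*}
the perforation being responsible for the integration domain $Y^*$ on the right, and $G_1$ factoring out of the inner integral since it is independent of $y$.

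Finally I would integrate over $\Omega\times[0,T]$ by Vitali's theorem, exactly as in the derivation of \eqref{lim3}. Writing $h^\e(\omega,t)=\int_{D^\e}G_1\,\phi(\omega,t,\cdot,\cdot/\e)\,dx$, the uniform $L^\infty$ bound on the test function (from $\phi_1\in L^\infty(\Omega)$, $\phi_2\in C_0^\infty(0,T)$ and $\phi_3\in C_0^\infty(D;C^\infty_{\#}(Y^*))^2$) gives $|h^\e(\omega,t)|\leq C\|G_1(\omega,t)\|_{L^2(D)^2}$, so that $\{h^\e\}$ is bounded in $L^2(\Omega\times[0,T])$ and hence uniformly integrable; passing to the limit then yields \eqref{lim4}. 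The one delicate step is the integrability estimate of the second paragraph: it is what guarantees both that $G_1$ is an honest $\e$-independent $L^2$ datum and that the family $\{h^\e\}$ admits a uniform majorant, and it is the sole place where the stochastic nature of the term enters.
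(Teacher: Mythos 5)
Your proof is correct, but it follows a genuinely different route from the paper's. The paper never freezes the It\^o integral as a fixed datum: it uses the product structure $\phi=\phi_1(\omega)\phi_2(t)\phi_3(x,y)$, expands the noise in the eigenbasis of $Q_1$, and reduces the claim, via H\"older, the stochastic Fubini theorem and the It\^o isometry, to showing that the purely deterministic quantities $t_i^\e(t)=\bigl|\int_0^t ds\int_{D^\e} g_1(s)e_{i1}(x)\bigl(\phi_3(x,\tfrac{x}{\e})-\int_{Y^*}\phi_3(x,y)\,dy\bigr)dx\bigr|^2$ vanish in the limit; each mode is handled by formula (5.5) of \cite{A-2s} and Vitali, and the modes are summed using $\sum_i\lambda_{i1}<\infty$ from \eqref{eqCT}. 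Your observation --- that $G_1(\omega,t,\cdot)=\int_0^t g_1(s)\,dW_1(s)$ has no $\e$-dependence whatsoever, so the lemma is literally the statement \eqref{lim3} with $\int_0^t f(s)\,ds$ replaced by an $\e$-independent random element of $L^2(\Omega\times[0,T]\times D)^2$ --- short-circuits all of this: the only stochastic input is the It\^o isometry (plus martingale continuity) giving the integrability of $G_1$, after which the pointwise-in-$(\omega,t)$ limit is the deterministic pairing of a fixed $L^2(D)^2$ function with the weakly convergent oscillating test function, and your majorant $C\,|\phi_2(t)|\,\|G_1(\omega,t)\|_{L^2(D)^2}\in L^2(\Omega\times[0,T])$ makes Vitali (indeed outright dominated convergence) immediate. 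Your argument is the more elementary of the two. What the paper's heavier template buys is visible in Lemma \ref{lemmalim8}: there the boundary coefficient $\mathcal{R}^\e g_{22}$ genuinely depends on $\e$, the integral cannot be frozen as an $\e$-independent datum, and the mode-by-mode stochastic Fubini/It\^o-isometry reduction (combined with the correctors $w_i^\e(s)$ of \eqref{wise}) becomes unavoidable; the paper simply deploys the same machinery already in the present lemma. One cosmetic slip on your side: $G_1$ is a plain It\^o integral (an $L^2(D)^2$-valued martingale), not a stochastic convolution, though the pathwise continuity you invoke holds exactly as you use it.
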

\begin{proof}
We use the decomposition of $\phi$ and rewrite the difference between the two sides as 
$$\int_\Omega \phi_1(\omega)d\mathbb{P}\int_0^T\phi_2(t)\left(\int_{D^\e} \sum_{i=1}^\infty \sqrt{\lambda_{i1}}\int_{0}^{t}g_1 (s) e_{i1}(x) d\beta_i(s) \left(\phi_3(x,\dfrac{x}{\e})-\int_{Y^*}\phi_3(x,y) dy\right) dx\right)dt,$$
where $(\beta_i)_{i=1}^\infty$ is a sequence of real valued independent Brownian motions, and $(e_{i1})_{i=1}^\infty$  and  $(\lambda_{i1})_{i=1}^\infty$ are the ones defined in \eqref{eqCT}. Using H{\"o}lder's inequality it is sufficient to show that:
$$
\lim_{\e\to 0} \int_\Omega\int_0^T\left|\int_{D^\e} \sum_{i=1}^\infty \sqrt{\lambda_{i1}}\int_{0}^{t}g_1 (s) e_{i1}(x) d\beta_i(s) \left(\phi_3(x,\dfrac{x}{\e})-\int_{Y^*}\phi_3(x,y) dy\right) dx\right|^2dtd\mathbb{P} = 0.
$$
Using the stochastic Fubini theorem and then It\^{o}' isometry, the limit becomes:
$$
\lim_{\e\to 0} \int_\Omega\int_0^T\sum_{i=1}^\infty\lambda_{i1}\left|\int_{0}^{t} ds \int_{D^\e} g_1 (s) e_{i1}(x) \left(\phi_3(x,\dfrac{x}{\e})-\int_{Y^*}\phi_3(x,y) dy\right) dx\right|^2dtd\mathbb{P} = 0.
$$
We denote by $t_i^\e(t) = \left|\displaystyle\int_{0}^{t} ds \int_{D^\e} g_1 (s) e_{i1}(x) \left(\phi_3(x,\dfrac{x}{\e})-\int_{Y^*}\phi_3(x,y) dy\right) dx\right|^2$. 

The property \eqref{eqCT} implies that $t_i^\e(t)$ are uniformly bounded and using again (5.5) from \cite{A-2s} and Vitali's theorem we derive that for any $i\geq 1$
$$
\lim_{\e\to 0} \int_0^T t_i^\e(t) dt =0.
$$
As $\displaystyle \sum_{i=1}^\infty \lambda_{i1} < \infty$, the Lemma is proved.
\end{proof}
\begin{lemma}\label{lemmalim8}
\begin{equation}
\label{lim8}
\begin{split}
&\lim_{\e\to 0} \int_\Omega\int_0^T\int_{\partial O^\e} \int_{0}^{t} \e g_{2}^\e(s)dW_{2}(s) \phi(\omega,t,x,\dfrac{x}{\e}) d\sigma(x) dt d\mathbb{P} =\\
&\int_{\Omega}  \int_0^T \int_D \int_{\partial O} \int_0^t g_{21} (s) dW_2(s) \phi(\omega,t,x,y) d\sigma(y) dx dt d\mathbb{P}\\
&+ \int_\Omega \int_0^T \int_D \int_{\partial O} \int_0^t  g_{22}(s) dW_2(s) \phi (\omega,t,x,y) d\sigma(y) dxdtd\mathbb{P}.
\end{split}
\end{equation}
\end{lemma}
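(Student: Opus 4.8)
The plan is to split $g_2^\e$ according to \eqref{defg2e} as $g_2^\e=g_{21}+\mathcal{R}^\e g_{22}$, so that the integral on the left of \eqref{lim8} breaks into two pieces: the $g_{21}$-piece, whose coefficient is $H^1(D)^2$-valued and does \emph{not} oscillate on the microscopic scale, will produce the first term on the right of \eqref{lim8}, while the $\mathcal{R}^\e g_{22}$-piece, whose coefficient $(g_{22})(x/\e)$ does oscillate, will produce the second. For each piece separately the argument then runs exactly parallel to the proof of Lemma \ref{lemmalim4}; the only genuinely new features are that the integration now takes place on the oscillating surfaces $\partial O^\e$ (with the intrinsic $\e$-weight of Definition \ref{2scdef1}) and that one of the two coefficients oscillates.

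First I would insert the decomposition $\phi(\omega,t,x,x/\e)=\phi_1(\omega)\phi_2(t)\phi_3(x,x/\e)$ and, for each piece, subtract the candidate limit. Pulling $\phi_1,\phi_2$ out and bounding them in $L^\infty(\Omega)$ and $L^\infty(0,T)$, it suffices by H\"older's inequality in $(\omega,t)$ to show that the $L^2(\Omega\times[0,T])$-norm of the remaining surface defect tends to $0$. Expanding the stochastic integral in the eigenbasis of $Q_2$, writing $\int_0^t g_{2k}(s)\,dW_2(s)=\sum_{j}\sqrt{\lambda_{j2}}\int_0^t g_{2k}(s)e_{j2}\,d\beta_j(s)$ for $k=1,2$, and then using the stochastic Fubini theorem and It\^o's isometry exactly as in Lemma \ref{lemmalim4}, the claim reduces to proving, for each fixed $j$, that
\begin{equation}\nonumber
\lim_{\e\to0}\int_0^T\int_0^t|\delta_j^\e(s)|^2\,ds\,dt=0,
\end{equation}
where $\delta_j^\e(s)$ is the purely deterministic surface defect
\begin{equation}\nonumber
\delta_j^\e(s)=\e\int_{\partial O^\e}a_j^\e(s,x)\,\phi_3\!\left(x,\tfrac{x}{\e}\right)d\sigma(x)-\int_D\int_{\partial O}a_j(s,x,y)\,\phi_3(x,y)\,d\sigma(y)\,dx,
\end{equation}
with $a_j^\e(s,x)=a_j(s,x,y)=\big(g_{21}(s)e_{j2}\big)(x)$ for the first piece and $a_j^\e(s,x)=\big(g_{22}(s)e_{j2}\big)(x/\e)$, $a_j(s,x,y)=\big(g_{22}(s)e_{j2}\big)(y)$ for the second.

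For each fixed $s$ and $j$ the convergence $\delta_j^\e(s)\to0$ is precisely the deterministic surface two-scale convergence of \cite{A95} (the surface analogue of formula (5.5) in \cite{A-2s}): for the $g_{21}$-piece the non-oscillating trace of $g_{21}(s)e_{j2}\in H^1(D)^2$ averages against $\int_{\partial O}\phi_3(x,y)\,d\sigma(y)$, while for the $g_{22}$-piece it is the oscillating-coefficient version. The bounds \eqref{eqCT} make $\delta_j^\e(s)$ uniformly bounded in $s$ and $\e$, so Vitali's theorem (as in Lemma \ref{lemmalim4}) carries the limit through the $s$- and $t$-integrations, and the trace-class condition $\sum_j\lambda_{j2}<\infty$ lets me sum over $j$ and add the two pieces to recover the right-hand side of \eqref{lim8}. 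The main obstacle is the oscillating $\mathcal{R}^\e g_{22}$-piece: there the integrand $\big(g_{22}(s)e_{j2}\big)(x/\e)\,\phi_3(x,x/\e)$ is a product of two microscopically oscillating factors and the periodic factor $g_{22}(s)e_{j2}$ is only $L^2(\partial O)^2$, so the surface convergence cannot be applied to a smooth integrand directly. I would resolve this by approximating $g_{22}(s)e_{j2}$ in $L^2(\partial O)^2$ by smooth $Y$-periodic functions, applying the surface convergence to the approximants, and bounding the error through the uniform surface trace estimate $\e\int_{\partial O^\e}|h(x/\e)|^2\,d\sigma(x)\le C\|h\|_{L^2(\partial O)^2}^2$, which holds by $Y$-periodicity; a Cauchy--Schwarz estimate on $\partial O^\e$ together with the boundedness of $\phi_3$ then gives $\delta_j^\e(s)\to0$ and completes the proof.
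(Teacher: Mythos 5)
Your proposal is correct, but it takes a genuinely different route from the paper for the key analytic step. Both proofs split $g_2^\e=g_{21}+\mathcal{R}^\e g_{22}$ and both reduce the stochastic part identically (eigenexpansion of $W_2$, stochastic Fubini, It\^{o}'s isometry, Vitali's theorem, and summation over $j$ via the trace-class condition, exactly as in Lemma \ref{lemmalim4}). Where you diverge is in how the deterministic surface defect is killed: the paper never invokes surface two-scale convergence for these terms, but instead converts the boundary integrals into volume integrals via auxiliary periodic Neumann correctors --- the function $w_1$ of \eqref{w1} with $\partial w_1^\e/\partial n=\e$ for the $g_{21}$-piece, and, for the oscillating piece, a family $w_i(s)$ solving \eqref{wis} with boundary datum $h_i(s)=g_{22}(s)e_{i2}\in L^2(\partial O)$, rescaled as in \eqref{wise}, with the uniform bound $\|w_i(s)\|_{H^1(Y^*)^2}\le C\|h_i(s)\|_{L^2(\partial O)^2}$ and \eqref{propwis} supplying summability; after the divergence theorem everything is handled by volume two-scale arguments (formula (5.5) of \cite{A-2s}). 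You instead stay on the surface, applying the convergence of \cite{A95} directly and taming the merely-$L^2(\partial O)$ coefficient $g_{22}(s)e_{j2}$ by smooth periodic approximation plus the scaling identity $\e\int_{\partial O^\e}|h(x/\e)|^2\,d\sigma(x)=\e^2\,\#\mathcal{K}^\e\,\|h\|^2_{L^2(\partial O)^2}\le C\|h\|^2_{L^2(\partial O)^2}$, which is indeed correct. Your route is more direct and avoids solving auxiliary PDEs; the paper's corrector trick avoids ever needing surface convergence with non-smooth data and recycles the machinery of Lemma \ref{lemmalim6}. One small point you gloss over: for the $g_{21}$-piece you cite surface two-scale convergence for the $H^1(D)^2$-valued coefficient as if it were immediate, but since Definition \ref{2scdef1} admits only test functions continuous in $x$, strictly you need the same density argument there too, with the error controlled by the trace estimate of Theorem \ref{2scgrad} ($\e\int_{\partial O^\e}|v|^2\,d\sigma\le C\|v\|^2_{H^1(D)}$ together with $\e|\partial O^\e|\le C$); this is filled in by exactly the approximation scheme you already describe for the $g_{22}$-piece, so the gap is cosmetic rather than structural.
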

\begin{proof}
We will prove this Lemma using a combination of arguments used to prove \eqref{lim6} and \eqref{lim4}. We will show
\begin{equation}
\label{lim81}
\begin{split}
&\lim_{\e\to 0} \int_\Omega\int_0^T\int_{\partial O^\e} \int_{0}^{t} \e g_{21}(s)dW_{2}(s) \phi(\omega,t,x,\dfrac{x}{\e}) d\sigma(x) dt d\mathbb{P} =\\
&\int_{\Omega}  \int_0^T \int_D \int_{\partial O} \int_0^t g_{21} (s) dW_2(s) \phi(\omega,t,x,y) d\sigma(y) dx dt d\mathbb{P},
\end{split}
\end{equation}
and
\begin{equation}
\label{lim82}
\begin{split}
&\lim_{\e\to 0} \int_\Omega\int_0^T\int_{\partial O^\e} \int_{0}^{t} \e \mathcal{R}^\e g_{22}(s)dW_{2}(s) \phi(\omega,t,x,\dfrac{x}{\e}) d\sigma(x) dt d\mathbb{P} =\\
&\int_\Omega \int_0^T \int_D \int_{\partial O} \int_0^t  g_{22}(s) dW_2(s) \phi (\omega,t,x,y) d\sigma(y) dxdtd\mathbb{P}.
\end{split}
\end{equation}
To show \eqref{lim81} we use the functions $w_1$ and $w_1^\e$ defined in \eqref{w1} and \eqref{w1e} to transform the boundary integrals into integrals in the volume. We rewrite the integral as
\begin{equation}\nonumber
\begin{split}
&\int_\Omega\int_0^T\int_{\partial O^\e} \dfrac{\partial w_1^\e}{\partial n} \int_{0}^{t} g_{21}(s)dW_{2}(s) \phi(\omega,t,x,\dfrac{x}{\e}) d\sigma(x) dt d\mathbb{P}=\\
&\int_\Omega\int_0^T\int_{ D^\e} \Delta w_1^\e \int_{0}^{t} g_{21}(s)dW_{2}(s) \phi(\omega,t,x,\dfrac{x}{\e}) dx dt d\mathbb{P}+\\
&\int_\Omega\int_0^T\int_{ D^\e} \nabla w_1^\e \nabla \left(\int_{0}^{t} g_{21}(s)dW_{2}(s) \phi(\omega,t,x,\dfrac{x}{\e}) dx \right)dt d\mathbb{P}.
\end{split}
\end{equation}
As in Lemma \ref{lemmalim4}
\begin{equation}\nonumber
\begin{split}
&\lim_{\e\to 0}\int_\Omega\int_0^T\int_{ D^\e} \Delta w_1^\e \int_{0}^{t} g_{21}(s)dW_{2}(s) \phi(\omega,t,x,\dfrac{x}{\e}) dx dt d\mathbb{P}=\\
&\int_\Omega\int_0^T\int_D\int_{Y^*} \Delta w_1 \int_0^t g_{21}(s)dW_{2}(s) \phi(\omega,t,x,y) dydxdtd\mathbb{P},
\end{split}
\end{equation}
and
\begin{equation}\nonumber
\begin{split}
&\lim_{\e\to 0}\int_\Omega\int_0^T\int_{ D^\e} \nabla w_1^\e \nabla \left(\int_{0}^{t} g_{21}(s)dW_{2}(s) \phi(\omega,t,x,\dfrac{x}{\e}) dx \right)dt d\mathbb{P}=\\
&\int_\Omega\int_0^T\int_D\int_{Y^*} \nabla w_1 \int_0^t g_{21}(s)dW_{2}(s) \nabla_y\phi(\omega,t,x,y) dydxdtd\mathbb{P},
\end{split}
\end{equation}
where we used also that $w_1\in W^{1,\infty}(Y^*)$. We add these two limits and integrate by parts to obtain \eqref{lim81}.

To show \eqref{lim82} we denote by $h_i(s)=g_{22}(s) e_{i2} \in L^2(\partial O)$, 
for each $i\in \mathbb{Z}_+$ and $s\in [0,T]$. From \eqref{eqCT} we deduce that
\begin{equation}
\label{hq}
\sup_{s\in[0,T]} \sum_{i=1}^\infty \lambda_{i2} ||h_i(s)||^2_{L^2(\partial O)} < +\infty.
\end{equation}
We will define $w_i(s)$ similarly as $w_1$ to be the unique element in $H^1(Y^*)$ that solves:
\begin{equation}
\label{wis}
\left\{
\begin{array}{rll}
-\Delta w_i(s)&= - \dfrac{1}{|Y^*|} \displaystyle\int_{\partial O} h_i(s) d\sigma &\mbox{ in }\  Y^* , \\
\displaystyle \frac{\partial w_i(s)}{\partial n} &=h_i(s)&\mbox{ on }\  \partial O, \\
\displaystyle \int_{Y^*} w_i(s) &=0,& \\
w_i(s)&- Y-periodic,& \\
\end{array}
\right.
\end{equation}
and $w_i^\e(s) = \e^2 w_i(s)\left( \dfrac{\cdot}{\e} \right)$ that will solve
\begin{equation}
\label{wise}
\left\{
\begin{array}{rll}
-\Delta w_i^\e(s)&=  - \dfrac{1}{|Y^*|} \displaystyle\int_{\partial O} h_i(s) d\sigma &\mbox{ in }\  D^\e , \\
\displaystyle \frac{\partial w_i^\e(s)}{\partial n} &=\e \mathcal{R}^\e h_i (s)&\mbox{ on }\  \partial O^\e. \\
\end{array}
\right.
\end{equation}
There exists a constant $C$ independent of $i$ and $s$ such that $||w_i(s)||_{H^1(Y^*)^2} \leq C ||h_i(s)||_{L^2(\partial O)^2}$ and using \eqref{hq} we also have:
\begin{equation}
\label{propwis}
\sup_{s\in[0,T]}\sum_{i=1}^\infty \lambda_{i2}||w_i(s)||^2_{H^1(Y^*)^2}  < \infty.\\
\end{equation}
\begin{equation}
\nonumber
\begin{split}
&\int_\Omega\int_0^T\int_{\partial O^\e} \int_{0}^{t} \e \mathcal{R}^\e g_{22}(s)dW_{2}(s) \phi(\omega,t,x,\dfrac{x}{\e}) d\sigma(x) dt d\mathbb{P}=\\
&\int_\Omega\int_0^T\int_{\partial O^\e}\int_0^t \sum_{i=1}^\infty \e \sqrt{\lambda_{i2}}\mathcal{R}^\e g_{22}(s) e_{i2} d\beta_{i}(s)\phi(\omega,t,x,\dfrac{x}{\e}) d\sigma(x) dt d\mathbb{P}.
\end{split}
\end{equation}
Using the stochastic Fubini theorem, It\^{o}'s isometry, the functions $w_i(s)$ and $w_i^\e(s)$ defined in \eqref{wis} and \eqref{wise} we pass to the limit and get the result.
\end{proof}

\begin{lemma}\label{lemmalim9}
\begin{equation}
\label{lim9}
\begin{split}
&\lim_{\e\to 0} \int_\Omega\int_0^T\int_{\partial O^\e} \int_0^t \e \alpha(\dfrac{x}{\e}) u^\e(\omega,s,x) ds  \phi(\omega,t,x,\dfrac{x}{\e}) d\sigma dt d\mathbb{P} =\\
& \int_\Omega\int_0^T\int_D \int_{\partial O} \int_0^t \alpha(y) u^* (\omega,s,x,y)ds \phi(\omega,t,x,y) d\sigma(y) dx dt d\mathbb{P}.
\end{split}
\end{equation}
\end{lemma}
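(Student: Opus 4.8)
The plan is to recognize \eqref{lim9} as an instance of surface two-scale convergence (Definition \ref{2scdef1}) applied to the time-integrated trace of $u^\e$. Set $V^\e(\omega,t,x):=\int_0^t u^\e(\omega,s,x)\,ds$ and note that, since the trace commutes with the time integral, the restriction of $V^\e$ to $\partial O^\e$ equals $\int_0^t u^\e_{|\partial O^\e}\,ds$. Writing $\Psi(\omega,t,x,y):=\alpha(y)\phi(\omega,t,x,y)$, the left-hand side of \eqref{lim9} is exactly
\[
\int_\Omega\int_0^T\int_{\partial O^\e}\e\,V^\e(\omega,t,x)\,\Psi\!\left(\omega,t,x,\tfrac{x}{\e}\right)\,d\sigma(x)\,dt\,d\mathbb{P}.
\]
Thus everything reduces to two points: showing that $V^\e_{|\partial O^\e}$ two-scale converges on $\partial O^\e$ to $\int_0^t u^*_{|\partial O}\,ds$, and justifying that $\Psi$ may be used as a test function despite $\alpha$ being only $L^\infty$.

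First I would establish the surface two-scale convergence of the integrated trace. By Theorem \ref{2scgrad} the sequence $\sqrt{\e}\,u^\e_{|\partial O^\e}$ is uniformly bounded in $L^2(\Omega\times[0,T]\times\partial O^\e)^2$ and $u^\e_{|\partial O^\e}\overset{2-s}{\longrightarrow}u^*_{|\partial O}$. Since $\|V^\e(\cdot,t,\cdot)\|_{L^2(\partial O^\e)^2}\le\sqrt{T}\,\|u^\e_{|\partial O^\e}\|_{L^2([0,T]\times\partial O^\e)^2}$ by Cauchy--Schwarz in time, the sequence $\sqrt{\e}\,V^\e_{|\partial O^\e}$ is also uniformly bounded, so by Theorem \ref{2scex1} it two-scale converges along a subsequence. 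To identify the limit with $\int_0^t u^*_{|\partial O}\,ds$ I would repeat the argument of Theorem \ref{2scint} verbatim, with the volume integrals replaced by the weighted surface integrals $\int_{\partial O^\e}\e\,(\cdot)\,d\sigma$: testing against functions of the form $\mathds{1}_{[0,t]}(s)\Psi_1$ transfers the surface two-scale convergence of $u^\e_{|\partial O^\e}$ to $V^\e_{|\partial O^\e}$ for a.e.\ $t$, and the uniform bound lets one invoke dominated convergence to conclude after integrating in $t$. Uniqueness of the limit then promotes the subsequential convergence to the whole sequence.

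Finally I would pass to the limit against $\Psi=\alpha\phi$ via Definition \ref{2scdef1}, which produces precisely the right-hand side of \eqref{lim9}. The main obstacle is that $\alpha\in L^\infty(\partial O)$ is not continuous in $y$, whereas Definition \ref{2scdef1} requires test functions continuous in the fast variable; $\Psi=\alpha\phi$ is therefore not directly admissible. I would resolve this by density: choose $Y$-periodic continuous functions $\alpha_n\to\alpha$ in $L^2(\partial O)$, apply the already-justified convergence to each admissible $\Psi_n=\alpha_n\phi$, and control the error uniformly in $\e$ by
\[
\left|\int_{\partial O^\e}\!\e\,V^\e(\alpha-\alpha_n)\!\left(\tfrac{x}{\e}\right)\phi\,d\sigma\right|
\le\Big(\e\!\int_{\partial O^\e}\!|V^\e|^2 d\sigma\Big)^{1/2}\Big(\e\!\int_{\partial O^\e}\!\big|(\alpha-\alpha_n)\!\left(\tfrac{x}{\e}\right)\phi\big|^2 d\sigma\Big)^{1/2}.
\]
The first factor is bounded uniformly in $\e$, while by the periodic surface averaging theorem (see \cite{A95}) the second factor converges as $\e\to0$ to $\|(\alpha-\alpha_n)\phi\|_{L^2(D\times\partial O)^2}$, which tends to $0$ as $n\to\infty$. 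Passing first to the limit in $\e$ for fixed $n$ and then letting $n\to\infty$ closes the argument; the remaining manipulations (Cauchy--Schwarz and the finiteness of the $\omega$- and $t$-integrals, using $\phi_1\in L^\infty(\Omega)$ and $\phi_2\in C_0^\infty(0,T)$) are routine.
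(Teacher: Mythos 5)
Your proof is correct, but it takes a genuinely different route from the paper's. The paper proves Lemma \ref{lemmalim9} by the same corrector device as Lemma \ref{lemmalim6}: it introduces the auxiliary periodic Neumann problem \eqref{wb} for $w_\alpha$, so that $w_\alpha^\e(x)=\e^2 w_\alpha(x/\e)$ satisfies $\partial w_\alpha^\e/\partial n=\e\,\alpha(x/\e)$ on $\partial O^\e$, converts the surface integral into volume integrals by the divergence theorem, and then passes to the limit using only the volume two-scale convergences \eqref{2s2}--\eqref{2s3}; the boundary coefficient never has to be tested against a surface two-scale limit at all. You instead stay entirely on the surface: you combine the trace convergence from Theorem \ref{2scgrad} with a surface analogue of Theorem \ref{2scint} to two-scale converge $V^\e=\int_0^t u^\e\,ds$ on $\partial O^\e$ (the uniform bound on $\sqrt{\e}\,V^\e$ via Cauchy--Schwarz in time and \eqref{estU} is right, and Theorem \ref{2scex1} plus uniqueness of the limit handles the subsequence issue), and you absorb the merely bounded $\alpha$ into the test function by an $L^2(\partial O)$-density argument, which is needed because Definition \ref{2scdef1} requires continuity in the fast variable. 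Both arguments are sound, and the trade-off is instructive: the paper's route reuses the machinery of \eqref{lim6}, but it tacitly requires regularity of $\nabla w_\alpha$ (the analogue of the fact $w_1\in W^{1,\infty}(Y^*)$ invoked in Lemma \ref{lemmalim6}), which for $\alpha$ only in $L^\infty(\partial O)$ is a nontrivial elliptic-regularity point hidden behind ``everything follows similarly''; your route avoids the corrector and that regularity question entirely and treats $L^\infty$ data cleanly. One simplification to your final step: you do not actually need the surface averaging theorem of \cite{A95} for the error term, since the exact scaling identity $\e\int_{\partial O^\e}g(x/\e)\,d\sigma(x)=\e^2\,\mathrm{card}(\mathcal{K}^\e)\int_{\partial O}g(y)\,d\sigma(y)\le |D|\,\|g\|_{L^1(\partial O)}$, applied with $g=|\alpha-\alpha_n|^2$ and the bound $\|\phi_3\|_{\infty}$, already controls the second Cauchy--Schwarz factor by $C\|\alpha-\alpha_n\|_{L^2(\partial O)}$ uniformly in $\e$, which is all the two-step limit ($\e\to 0$ first, then $n\to\infty$) requires.
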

\begin{proof}
The proof works in the same way as for the Lemma \ref{lemmalim6}, except we need to use instead of $w_1$, the function $w_\alpha$ define as the solution of:
\begin{equation}
\label{wb}
\left\{
\begin{array}{rll}
-\Delta w_\alpha&= -  \dfrac{\int_{\partial O} \alpha(y) d\sigma(y)}{|Y^*|} &\mbox{ in }\  Y^* , \\
\displaystyle \frac{\partial w_\alpha}{\partial n} &=\alpha&\mbox{ on }\  \partial O, \\
\displaystyle \int_{Y^*} w_\alpha &=0,& \\
w_\alpha&- Y-periodic.& \\
\end{array}
\right.
\end{equation}
Everything follows similarly.
\end{proof}

\subsection{The variational formulation of the two-scale limit}
 
We get, by passing to the limit,  the following equation:
\begin{equation}\label{limvarfor}
\begin{split}
&\int_\Omega\int_0^T\int_{D}\int_Y \left( u^* (\omega,t,x,y) -u_0(\omega,x,y)-\int_{0}^{t}f(s,x)-\int_{0}^{t}g_{1}(s)dW_{1}(s)\right) \phi(\omega,t,x,y) dy dx dt d\mathbb{P}\\
+&\int_\Omega\int_0^T\int_{D}\int_Y \int_0^t\nu\nabla_y u^*(\omega,s,x,y) \nabla_y\phi (\omega,t,x,y) dsdydxdtd\mathbb{P}\\
=&\int_\Omega \int_0^T \int_D \int_{\partial O} \left( u^*(\omega,t,x,y) -v_0 (\omega,x,y) -\int_0^t \alpha(y) u^*(\omega,s,x,y) \right)\phi(\omega,t,x,y) d\sigma(y) dy dx dtd\mathbb{P}\\
+&\int_{\Omega}  \int_0^T \int_D \int_{\partial O} \int_0^t g_{21} (s) dW_2(s) \phi(\omega,t,x,y) d\sigma(y) dx dt d\mathbb{P}\\
+& \int_\Omega \int_0^T \int_D \int_{\partial O} \int_0^t  g_{22}(s) dW_2(s) \phi (\omega,t,x,y) d\sigma(y) dxdtd\mathbb{P},
\end{split}
\end{equation}
for every $\phi (\omega,t,x, y)$ of the following form $\phi (\omega,t,x,y) = \phi_1 (\omega) \phi_2 (t) \phi_3(x, y)$, with $\phi_1 \in L^\infty(\Omega)$, $\phi_2 \in C_0^\infty(0,T)$ and $\phi_3 \in C^\infty_0(D;C_{\#}^\infty (Y^*)^2$ such that:
$$\phi_3 (x,\cdot) \equiv 0\mbox{ in } Y,$$
$$\operatorname{div}_y \phi_3 (x,\cdot) \equiv 0 \mbox{ in } Y,$$
$$\operatorname{div}_x \left( \int_Y \phi_3 (x,y)dy\right) \equiv 0 \mbox{ in } D.$$
So for every $t\in[0,T]$, and a.e. $\omega\in\Omega$ and for every $\phi_3 \in  C^\infty_0(D;C_{\#}^\infty (Y^*))^2$ with the above properties we get the equation:
\begin{equation}\label{2svarfor}
\begin{split}
&\int_{D}\int_{Y^*} \left( u^* (\omega,t,x,y) -u_0(\omega,x,y)-\int_{0}^{t}f(s,x)-\int_{0}^{t}g_{1}(s)dW_{1}(s)\right) \phi_3(x,y) dy dx \\
+&\int_{D}\int_{Y^*} \int_0^t\nu\nabla_y u^*(\omega,s,x,y) \nabla_y\phi_3(x,y) dsdydx\\
=&\int_D \int_{\partial O} \left( -u^*(\omega,t,x,y) +v_0 (\omega,x,y) -\int_0^t \alpha(y) u^*(\omega,s,x,y) \right)\phi_3(x,y) d\sigma(y) dy dx\\
+&\int_D \int_{\partial O} \int_0^t g_{21} (s) dW_2(s) \phi_3(x,y) d\sigma(y) dx + \int_D \int_{\partial O} \int_0^t  g_{22}(s) dW_2(s) \phi_3(x,y) d\sigma(y) dx.
\end{split}
\end{equation}

\section{The homogenized problem}
\label{section6}

\subsection{The study of the two-scale limit equation}\label{41}
To study the equation \eqref{2svarfor} we will introduce the following spaces:

\begin{equation}
\label{spaceL}
\mf{L}^2:=L^2(D;L^2(Y^*))^2\times L^2(D;L^2(\partial O))^2,
\end{equation}

\begin{equation}
\label{spaceH}
\mf{H}^1:=L^2(D;H^1(Y^*))^2\times L^2(D;H^{1/2}(\partial O))^2,
\end{equation}
with the inner products 

\begin{equation}
  \langle \mf{U},\mf{V}\rangle_{\mf{L}^2}=\int_D \int_{Y^*}\big[  u(x,y) \cdot v(x,y)\big] dydx
  +\int_D\int_{\partial O} \big[  \o{u}(x,y') \cdot \o{v}(x.y')\big] d\sigma(y') dx,
\end{equation} 
and 
\[
 ((\mf{U},\mf{V}))_{\mf{H}^1} = \int_D\int_{Y^*} \big[ \nabla_y u(x,y) \cdot \nabla_y v(x,y)\big] dy dx.\]
Let $\mf{H}$ be the closure of $\mathcal{V}$ in $\mf{L}^2$, and $\mf{V}$ the closure of $\mathcal{V}$ in $\mf{H}^1$,  
where 
\begin{equation}
\label{spaceV}
\begin{split}
\mathcal{V}:=&\left\{ \mf{U}=\left(\begin{array}{c} 
u\\
\o{u}
\end{array}\right)
\in C_0^\infty(D;C^\infty_{\#}(Y^*))^2\times \gamma^Y(C_0^\infty(D;C^\infty_{\#}(Y^*))^2)\ \ |\ \right. \\
&\ \left. \o{u}=\gamma^Y(u), \ \operatorname{ div }_y u=0,\ \o{u}\cdot n_y=0,\ \operatorname{ div }_x\left(\int_{Y^* }u dy\right)=0, \left(\int_{Y^* }u dy\right)\cdot n =0 \mbox{ on } \partial D\right\}.
\end{split}
\end{equation}
By $\gamma^Y$ we denoted the trace operator on $\partial O$ and by $n_Y$ the normal vector on $\partial O$ pointing inside $O$.
We denote by $\Pi$ the projection operator from $\mf{L}^2$ onto $L^2(D;L^2(Y^*))^2$ and let $H=\Pi \mf{H}$, $V=\Pi\mf{V}$. We have:
\begin{equation}\label{H}
\begin{split}
H=\{ u\in L^2(D; L^2_\#(Y))^2 | &\operatorname{ div }_x \left( \int_Y u dy\right) =0,\ \left( \int_Y u dy\right)\cdot n=0 \mbox{ on } \partial D,\ \operatorname{ div }_y u=0,\\
&\o{u}\cdot n_y=0 \mbox{ on } \partial O \mbox{ for some } \o{u} \in L^2(D; L^2(\partial O))^2\},
\end{split}
\end{equation}
and
\begin{equation}\label{V}
\begin{split}
V=\{ u\in L^2(D; H^1_\#(Y))^2 | &\operatorname{ div }_x \left( \int_Y u dy\right) =0,\left( \int_Y u dy\right)\cdot n=0 \mbox{ on } \partial D,\\
&\operatorname{ div }_y u=0,u\cdot n_y=0 \mbox{ on } \partial O\}.
\end{split}
\end{equation}
The spaces defined will help us rewrite the equation for $u^*$, \eqref{2svarfor} as a stochastic partial differential equation in a product space that we will be able to study. First we formulate the stochastic partial differential equation satisfied by $u^*$. For this we need an orthogonality result that we will prove in the following lemma:
\begin{lemma} \label{ortog1}
Let $\mf{V}_p$ be the closure of the space 
\begin{equation}
\label{spaceVp}
\left\{ \mf{U}=\left(\begin{array}{c} 
u\\
\o{u}
\end{array}\right)
\in C_0^\infty(D;C^\infty_{\#}(Y^*))\times \gamma^Y(C_0^\infty(D;C^\infty_{\#}(Y^*)))\ \ |\ \right. \\
\ \left. \o{u}=\gamma^Y(u), \right\},
\end{equation}
in  $L^2(D;L^2_\#(Y^*)) \times L^2(D;L^2(\partial O))$, and let $V_p = \Pi \mf{V}_p$.

Any $p \in V_p$ has a trace $\o{p} \in L^2(D;L^2(\partial O))$, and we can define $\nabla_y p \in \left(L^2(D; H^1_\#(Y^*))^2 \right)'$
\begin{equation}\label{defgrad}
\langle \nabla_y p , \phi \rangle_{\langle\left(L^2(D; H^1_\#(Y^*))^2 \right)' , L^2(D; H^1_\#(Y^*))^2\rangle} = - \int_D\int_{Y^*} p \operatorname{div} \phi dy dx + \int_D \int_{\partial O} \o{p} \phi\cdot n_y d \sigma(y) dx.
\end{equation}
Then, the operator $\nabla_y : V_p \to \left(L^2(D; H^1_\#(Y^*))^2 \right)'$ defined in \eqref{defgrad} has closed range.
\begin{proof}
Let $\nabla_y p_n$ be a sequence in $\left(L^2(D; H^1_\#(Y^*))^2 \right)'$ converging strongly to $f$. The sequence restricted to $L^2(D; H^1_{0\#}(Y^*))^2$ will also converge strongly in $\left(L^2(D; H^1_{0\#}(Y^*))^2 \right)'$, which implies (see \cite{temam}) that there exists $p \in L^2(D;L^2_\#(Y^*))$ such that
$$ \int_D\int_{Y^*} -p \operatorname{div}_y \phi = \langle f , \phi \rangle_{\langle\left(L^2(D; H^1_{0\#}(Y^*))^2 \right)' , L^2(D; H^1_{0\#}(Y^*))^2\rangle} ,$$
and $p_n$ converges strongly to $p$ in $L^2(D;L^2_\#(Y))$. It follows that for every $\phi \in L^2(D;H^1_\#(Y^*))^2$ the sequence $\int_D \int_{\partial O} \o{p}_n \phi\cdot n_y d \sigma(y) dx$ is convergent, so by the uniform boundedness theorem the sequence $\o{p}_n$ is bounded in $L^2(D;L^2(\partial O))$. So up to a subsequence $p_n$ will converge weakly in $V_p$, and the limit is $p$. This means that $p \in V_p$ and $\nabla_y p =f$.
\end{proof}

\end{lemma}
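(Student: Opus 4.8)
The plan is to show the range is closed by the direct criterion: take any sequence $p_n\in V_p$ with $\nabla_y p_n\to f$ strongly in $(L^2(D;H^1_\#(Y^*))^2)'$, and produce a limit $p\in V_p$ with $\nabla_y p=f$. The first step is to separate the interior and boundary contributions in the defining identity \eqref{defgrad}. Restricting each functional $\nabla_y p_n$ to the closed subspace $L^2(D;H^1_{0\#}(Y^*))^2$ of test fields whose trace on $\partial O$ vanishes annihilates the surface integral, so that on this subspace $\langle\nabla_y p_n,\phi\rangle = -\int_D\int_{Y^*} p_n\operatorname{div}_y\phi\,dy\,dx$, and these restrictions still converge strongly (to the restriction of $f$).

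The heart of the argument is a parametrized de Rham/Ne\v{c}as result. The classical characterization of gradients as elements of $(H^1_{0\#}(Y^*)^2)'$ (cf. \cite{temam}), applied fibrewise in $x$ and integrated over $D$, yields a scalar potential $p\in L^2(D;L^2_\#(Y^*))$, which I would normalize to have zero $y$-mean for a.e.\ $x$ (this leaves $\nabla_y p_n$ unchanged, since a $y$-constant is in the kernel of \eqref{defgrad} by the divergence theorem and $Y$-periodicity, and stays within $V_p$), such that $-\int_D\int_{Y^*} p\operatorname{div}_y\phi=\langle f,\phi\rangle$ for all $\phi\in L^2(D;H^1_{0\#}(Y^*))^2$. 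Crucially, the accompanying Ne\v{c}as inequality makes $\nabla_y$ bounded below on the zero-mean class, so that the Cauchy sequence $\nabla_y p_n$ forces $p_n\to p$ strongly in $L^2(D;L^2_\#(Y^*))$. I expect this step---obtaining the potential together with strong convergence, with a constant uniform in the parameter $x$ over the perforated cell $Y^*$ with periodic conditions---to be the main technical obstacle, as it relies on a Bogovski\u\i/Ne\v{c}as estimate on $Y^*$ whose constant must not depend on $x$.

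It remains to recover the trace and upgrade the identity to the full test space. For arbitrary $\phi\in L^2(D;H^1_\#(Y^*))^2$ I rewrite the surface term as $\int_D\int_{\partial O}\o{p}_n\,\phi\cdot n_y\,d\sigma\,dx=\langle\nabla_y p_n,\phi\rangle+\int_D\int_{Y^*} p_n\operatorname{div}_y\phi$; the right-hand side converges because $\nabla_y p_n\to f$ and $p_n\to p$ strongly. Thus for each fixed $\phi$ the numbers $\int_D\int_{\partial O}\o{p}_n\,\phi\cdot n_y$ converge, and the uniform boundedness principle gives a uniform bound on $\o{p}_n$ in $L^2(D;L^2(\partial O))$. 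Passing to a subsequence, $\o{p}_n\rightharpoonup\o{p}$ weakly; combined with $p_n\to p$ this yields $(p_n,\o{p}_n)\rightharpoonup(p,\o{p})$ in $L^2(D;L^2_\#(Y^*))\times L^2(D;L^2(\partial O))$. Since $\mf{V}_p$ is a closed subspace it is weakly closed, so $(p,\o{p})\in\mf{V}_p$, i.e.\ $p\in V_p$ with trace $\o{p}$. Finally, substituting these limits into \eqref{defgrad} gives $\langle\nabla_y p,\phi\rangle=\lim_n\langle\nabla_y p_n,\phi\rangle=\langle f,\phi\rangle$ for all $\phi$, so $f=\nabla_y p$ lies in the range, proving it closed.
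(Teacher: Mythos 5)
Your proposal follows essentially the same route as the paper's proof: restrict the functionals to $L^2(D;H^1_{0\#}(Y^*))^2$ to annihilate the surface term, invoke the de Rham/Ne\v{c}as characterization (the paper's citation of \cite{temam}) to obtain $p\in L^2(D;L^2_\#(Y^*))$ with $p_n\to p$ strongly, then use the uniform boundedness principle to bound the traces $\o{p}_n$ in $L^2(D;L^2(\partial O))$ and extract a weak limit in the closed (hence weakly closed) subspace $\mf{V}_p$, giving $p\in V_p$ and $\nabla_y p=f$. The only difference is that you spell out details the paper leaves implicit in its citation---the zero-mean normalization in $y$ (justified since $y$-constants lie in the kernel of $\nabla_y$ and in $V_p$) and the Ne\v{c}as lower bound forcing strong convergence of $p_n$---which is a refinement, not a different argument.
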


\begin{lemma}
\label{ortog2}
The orthogonal space of $V \subset L^2(D;L^2_\#(Y^*))^2$ can be written:
\begin{equation}
\label{ortogs}
V^{\perp}=\{\nabla_x p_0(x) + \nabla_y p_1(x,y) \  | \  p_0 \in H^1(D) \mbox{ and } p_1 \in V_p\}.
\end{equation}
\end{lemma}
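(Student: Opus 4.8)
The plan is to prove the two inclusions separately, treating $V^{\perp}$ as the orthogonal complement inside $L^2(D;L^2_\#(Y^*))^2$ and using two successive de Rham--type arguments, one in the microscopic variable $y$ and one in the macroscopic variable $x$. I would dispatch the inclusion $\supseteq$ first. Given $p_0\in H^1(D)$ and $p_1\in V_p$ with $\nabla_x p_0+\nabla_y p_1\in L^2(D;L^2_\#(Y^*))^2$, I take an arbitrary $u$ in the dense subspace $\mathcal V$ and compute the $L^2$ pairing $\langle \nabla_x p_0+\nabla_y p_1,u\rangle$. For the microscopic term I use the definition \eqref{defgrad}: since $u\in\mathcal V$ satisfies $\operatorname{div}_y u=0$ and $\o u\cdot n_y=0$ on $\partial O$, both terms on the right of \eqref{defgrad} vanish. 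For the macroscopic term I reduce $\int_D\int_{Y^*}\nabla_x p_0\cdot u\,dy\,dx$ to $\int_D\nabla_x p_0\cdot\big(\int_{Y^*}u\,dy\big)\,dx$ and integrate by parts in $x$; the constraints $\operatorname{div}_x\big(\int_{Y^*}u\,dy\big)=0$ and $\big(\int_{Y^*}u\,dy\big)\cdot n=0$ on $\partial D$ built into $\mathcal V$ (see \eqref{V}) kill it. By density this gives $\nabla_x p_0+\nabla_y p_1\perp V$.

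For the reverse inclusion I take $g\in V^{\perp}$. The key observation is that the subspace $\tilde V_0:=\{w\in L^2(D;H^1_\#(Y^*))^2:\operatorname{div}_y w=0,\ w\cdot n_y=0\text{ on }\partial O,\ \int_{Y^*}w\,dy=0\}$ is contained in $V$, because for a zero--mean field the two macroscopic constraints hold trivially. Hence $g\perp\tilde V_0$. Testing against product functions $w(x,y)=\psi(x)\chi(y)$ with $\psi\in C_0^\infty(D)$ and $\chi$ a cell field in $\tilde V_0$, I first obtain $\int_{Y^*}g(x,y)\cdot\chi(y)\,dy=0$ for a.e. $x$ and every such $\chi$. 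The orthogonal in $L^2(Y^*)^2$ of the mean--zero, $y$--divergence--free fields with $\chi\cdot n_y=0$ on $\partial O$ is exactly $\{\nabla_y q\}+\{\text{constant vectors in }\mathbb{R}^2\}$, the constants being the price of the zero--mean restriction; the fact that this range is closed, so that $q$ depends measurably and boundedly on $x$ and lands in $V_p$, is precisely the content of Lemma \ref{ortog1}. This yields $g(x,y)=\nabla_y p_1(x,y)+G(x)$ with $p_1\in V_p$ and $G\in L^2(D)^2$.

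It remains to identify $G$ as a macroscopic gradient. For this I test the full orthogonality $g\perp V$ against fields $u\in V$ of nonzero mean: the microscopic part contributes $\langle\nabla_y p_1,u\rangle=0$ exactly as in the easy direction, leaving $\int_D G(x)\cdot\big(\int_{Y^*}u\,dy\big)\,dx=0$ for all $u\in V$. I then identify the closure of the set of admissible means $\{\int_{Y^*}u\,dy:u\in V\}$ with $\{M\in L^2(D)^2:\operatorname{div}_x M=0,\ M\cdot n=0\text{ on }\partial D\}$, and conclude by the classical Hodge/de Rham decomposition in $D$ (as in \cite{temam}) that $G=\nabla_x p_0$ with $p_0\in H^1(D)$. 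Combining the two steps gives $g=\nabla_x p_0+\nabla_y p_1$, i.e. the inclusion $\subseteq$.

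I expect the main obstacle to be exactly this last identification: proving that every macroscopically divergence--free field with vanishing normal trace on $\partial D$ is a limit of the $Y^*$--averages of fields $u\in V$. This amounts to solving the cell problems that produce $y$--divergence--free fields with $\chi\cdot n_y=0$ on $\partial O$ and prescribed mean (the very problems that generate the permeabilities) and then lifting a prescribed macroscopic profile $M(x)$. The surjectivity up to closure of this averaging map, together with the measurable--in--$x$ reconstruction of $p_1$ guaranteed by the closed--range Lemma \ref{ortog1}, is the delicate point where the microscopic and macroscopic de Rham steps must be glued together consistently.
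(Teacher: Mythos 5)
Your proposal is correct in substance, but it is organized genuinely differently from the paper's proof, so a comparison is worthwhile. The paper follows Lemma 3.8 of \cite{A92}: it writes $V=V^1\cap V^2$, with $V^1$ carrying the macroscopic constraints and $V^2$ the microscopic ones, identifies $(V^1)^\perp=\{\nabla_x p_0\}$ and $(V^2)^\perp=\{\nabla_y p_1 \ |\ p_1\in V_p\}$ through the adjoint pair $(\operatorname{Div}_y,\nabla_y)$ of \eqref{defDiv}--\eqref{defgrad} and the closed-range Lemma \ref{ortog1}, and then confronts the real crux, namely that $V^\perp=\o{(V^1)^\perp+(V^2)^\perp}$, so the sum must be shown closed; this is settled by proving $V^1+V^2=L^2(D;H^1_\#(Y^*))^2$ outright, splitting an arbitrary $u$ by means of the steady Stokes cell solutions $w_i$ and the auxiliary macroscopic Neumann problem \eqref{prob} built on the positive definite permeability matrix $M_{ij}=\int_{Y^*}(w_i)_j\,dy$. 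You instead run a two-stage de Rham argument with disintegration in $x$: testing against mean-zero microscopic fields gives, for a.e.\ $x$, the cell decomposition $g(x,\cdot)=\nabla_y p_1(x,\cdot)+G(x)$, and here the closedness issue that drives the paper's proof is benign, because at the cell level the constant vectors form a finite-dimensional complement of the (closed, by the cell version of Lemma \ref{ortog1}) range of $\nabla_y$, meeting it trivially since a periodic $p$ with $\nabla_y p=c$ forces $c=0$; this also yields bounded projections onto the two summands, which is what delivers the measurability and square-integrability in $x$ of $G$ and $p_1$ that you invoke somewhat loosely. The obstacle you flag at the end --- density of the averages $\{\int_{Y^*}u\,dy\ |\ u\in V\}$ in $\{m\in L^2(D)^2\ |\ \operatorname{div}_x m=0,\ m\cdot n=0 \mbox{ on }\partial D\}$ --- is exactly where the paper's cell construction enters, and in your framing it is actually \emph{easier} than in the paper's: since the prescribed profile $m$ is already solenoidal with vanishing normal trace, the lift $u(x,y)=\sum_{i=1}^2 w_i(y)\,\theta_i(x)$ with $\theta(x)$ solving $M\theta(x)=m(x)$ lies in $V$ by the characterization \eqref{V} and has average exactly $m(x)$, so the averaging map is surjective (not merely of dense range) and no analogue of the elliptic problem \eqref{prob} is needed --- that solve is required in the paper only because there one decomposes an arbitrary test field rather than lifting a divergence-free one. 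In short, both proofs consume the same two ingredients, Lemma \ref{ortog1} and the Stokes cell problems with invertible permeability; the paper converts them into closedness of a sum of closed subspaces, while your route converts them into a pointwise-in-$x$ cell decomposition plus surjectivity of the averaging map, at the price of the gluing-in-$x$ bookkeeping that you correctly identified as the delicate point.
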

\begin{proof}
We will follow closely the idea from \cite{A92}, Lemma 3.8.
We will write $V=V^1 \cap V^2$, where
\begin{equation}\label{V1}
V^1=\{ u\in L^2(D; H^1_\#(Y))^2 | \operatorname{ div }_x \left( \int_Y u dy\right) =0,\ \left( \int_Y u dy\right)\cdot n=0 \mbox{ on } \partial D\},
\end{equation}
and
\begin{equation}\label{V2}
V^2=\{ u\in L^2(D; H^1_\#(Y))^2 | \operatorname{ div }_y u=0,\ u\cdot n_y=0 \mbox{ on } \partial O\}.
\end{equation}
We will show first that 
\begin{equation}\label{perp1}
(V^1)^\perp = \{\nabla_x p_0\ | \ p_0 \in L^2(D)\},
\end{equation}
and
\begin{equation}\label{perp2}
(V^2)^\perp = \{\nabla_y p_1\ | \ p_1 \in V_p\}.
\end{equation}
Let us define the operator $\operatorname{Div}_y : L^2(D; H^1_\#(Y^*))^2 \to \left(V_p\right)'$, by
\begin{equation}\label{defDiv}
\langle\operatorname{Div}_y u , p\rangle_{\langle \left(V_p\right)' , V_p \rangle} = -\int_D\int_{Y^*} \operatorname{div}_y u p dy dx + \int_D \int_{\partial O} u\cdot n_y \o{p} d\sigma(y) dx.
\end{equation}
It is easy to see that $\nabla_y : V_p \to \left(L^2(D; H^1_\#(Y^*))^2 \right)'$ defined in \eqref{defgrad} is its adjoint. The operator $\nabla_y$ has closed range according to Lemma \ref{ortog1}, which implies that its range equal to $(Ker \operatorname{Div}_y)^\perp = (V^2)^\perp$, which shows \eqref{perp2}. Equality \eqref{perp1} is shown in a similar way.
But $V^\perp = (V^1 \cap V^2)^\perp =\o{ (V^1)^\perp + (V^2)^\perp}$, so we need to show that $(V^1)^\perp + (V^2)^\perp$ is closed or equivalently that $V^1 + V^2$ is closed. We will show exactly as in \cite{A92} that $V^1 + V^2 = L^2(D;H^1_\#(Y^*))^2$.
We define the solutions of the cell problems in $Y^*$:
\begin{equation}\label{cell1}
\left\{
\begin{array}{rll}
-\Delta w_i + \nabla q_i&= e_i \mbox{ in }\    Y^*, \\
\operatorname{div} w_i &=0\ \mbox{  in }\  Y^*, \\
w_i&=0\ \mbox{  on }\ \partial O, \\
w_i & Y-periodic,
\end{array}
\right.
\end{equation}
for $e_i$ the unit vector, $i\in\{1,2\}$. The matrix with the entries $$M_{ij} = \int_{Y^*} (w_i)_j dy = \int_{Y^*} \nabla w_i \cdot \nabla w_j dy,$$
is symmetric and positive definite, so for any $u\in L^2(D;H^1_\#(Y^*))^2$ there exists a solution $v\in H^1(D)$, unique up to an additive constant of the elliptic problem
\begin{equation}\label{prob}
\left\{
\begin{array}{rll}
-\operatorname{div} (M\nabla v - \int_{Y^*} u dy ) &= 0 \mbox{ in }\ D, \\
(M \nabla v - \int_{Y^*} u dy ) \cdot n &=0\mbox{ on }\ \partial D.
\end{array}
\right.
\end{equation}
As $\sum_{i=1}^2 w_i(y) \dfrac{\partial v} {\partial x_i}(x) \in V^2$ and $u(x,y) - \sum_{i=1}^2 w_i(y) \dfrac{\partial v} {\partial x_i}(x) \in V^1$, then $L^2(D;H^1_\#(Y^*))^2 = V^1 + V^2$.
\end{proof}
The variational formulation \eqref{2svarfor} and Lemma \ref{ortog2} allows us to formulate the two-scale stochastic partial differential equation satisfied by $u^*$, the two scale limit of the sequence $u^\e$ as:
\begin{equation}
\label{2ssystem}
\left\{
\begin{array}{rll}
d u^*(t,x,y) &=\left [ \nu\Delta_{yy} u^*(t,x,y)- \nabla_x p(t,x) - \nabla_y p_1(t,x,y)\right ]dt\\
&+f(t,x)dt +g_1(t) d W_1(t) &\mbox{in}\  [0,T]\times D\times Y^*, \\
\operatorname{div}_y u^*(t,x,y) &=0&\mbox{in}\  [0,T]\times D\times Y^*, \\
u^*(t,x,y) \cdot n_y&=0&\mbox{on}\ [0,T]\times D\times \partial O, \\
d u^*_{\tau_y} (t,x,y) &=-\left[\nu\left( \dfrac{\partial u^*(t,x,y)}{\partial n_y}\right)_{\tau_y}  + \alpha(y) u^*_{\tau_y} (t,x,y)\right] dt\\
&+g_{21}(t)_{\tau_y} d W_2(t)+g_{22}(t)_{\tau_y}d W_2(t)&\mbox{on}\  [0,T]\times D\times \partial O, \\
\operatorname{div}_x\left(\displaystyle \int_Y u^*(t,x,y)\right) &=0&\mbox{in}\ [0,T]\times D, \\
\left(\displaystyle \int_Y u^*(t,x,y)\right) \cdot n_x&=0&\mbox{on}\ [0,T]\times \partial D, \\
u^*(0,x,y)&=u_0(x,y)&\mbox{in}\ [0,T]\times D\times Y^*, \\
u^*(0,x,y)&=v_0(x,y)&\mbox{on}\  [0,T]\times D\times \partial O,\\
\end{array}
\right.
\end{equation}
Let us define the linear operator $\mf{A} : D(\mf{A}) \subset \mf{H} \to \mf{H}'$ by 
\begin{equation}
\label{defA}
\mf{A} \mf{U}=\mf{A} \left( \begin{array}{c}
u\\
\o{u}
\end{array}\right)
=
\left(\begin{array}{c}
-\nu  \Delta_{yy} u\\
\nu  \left(\dfrac{\partial u}{\partial n_y}\right)_{\tau_y}+\alpha\o{u}_{\tau_y}
\end{array}\right),
\end{equation}
with
$$D(\mf{A})=\{ \mf{U}\in \mf{V}  | -\Delta_{yy} u \in L^2(D;L^2_\#(Y^*))^2\ \mbox{ and } \left(\dfrac{\partial u}{\partial n_y}\right)_{\tau_y} \in L^2(D;L^2(\partial O))^2\},$$
and
$$\mf{A} \mf{U} \cdot \mf{V}=\displaystyle\int_{D}\int_{Y^*} \nu \nabla_y u \nabla_y v dy dx+\int_D\int_{\partial O} \alpha \overline{u}_{\tau_y} \overline{v}_{\tau_y} d \sigma(y) dx.$$

\begin{lemma}\label{lemmaA} (Properties of the operator $\mf{A}$) The linear operator $\mf{A}$ is positive and self-adjoint in $\mf{H}$.
\end{lemma}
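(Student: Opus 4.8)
The plan is to mirror verbatim the proof of Lemma~\ref{lemmaAe}, the only structural differences being that the bilinear form now lives on the product space over $D\times Y^*$ and that the $\mf{V}$-norm is the pure gradient norm $\|\mf{U}\|_{\mf{V}}^2=\int_D\int_{Y^*}|\nabla_y u|^2\,dy\,dx$, so that no Poincar\'e inequality is needed for coercivity. First I would record symmetry: for $\mf{U},\mf{V}\in\mf{V}$ the form
\begin{equation}\nonumber
\langle\mf{A}\mf{U},\mf{V}\rangle=\int_D\int_{Y^*}\nu\,\nabla_y u\cdot\nabla_y v\,dy\,dx+\int_D\int_{\partial O}\alpha\,\o{u}_{\tau_y}\cdot\o{v}_{\tau_y}\,d\sigma(y)\,dx
\end{equation}
is manifestly symmetric in $\mf{U}$ and $\mf{V}$. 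Taking $\mf{V}=\mf{U}$ and using the strict positivity of $\alpha$ gives $\langle\mf{A}\mf{U},\mf{U}\rangle\ge\nu\int_D\int_{Y^*}|\nabla_y u|^2\,dy\,dx=\nu\|\mf{U}\|_{\mf{V}}^2\ge0$, so $\mf{A}$ is positive and coercive on $\mf{V}$ with constant $\nu$; density of the form follows from $\mf{V}$ being dense in $\mf{H}$ by construction.

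For self-adjointness, as in Lemma~\ref{lemmaAe}, I would verify the characterization
\begin{equation}\nonumber
D(\mf{A})=\{\mf{U}\in\mf{V}\ |\ \langle\mf{A}\mf{U},\mf{V}\rangle\le C\|\mf{V}\|_{\mf{H}}\ \text{ for every }\ \mf{V}\in\mf{V}\}.
\end{equation}
If $\mf{U}$ lies in the right-hand side, the functional $\mf{V}\mapsto\langle\mf{A}\mf{U},\mf{V}\rangle$ extends boundedly to $\mf{H}$, and Green's formula in the $y$-variable on $Y^*$, integrated over $D$,
\begin{equation}\nonumber
\int_D\int_{Y^*}\nu\,\nabla_y u\cdot\nabla_y v\,dy\,dx=\int_D\int_{Y^*}-\nu\,\Delta_{yy}u\,v\,dy\,dx+\int_D\int_{\partial O}\nu\,\frac{\partial u}{\partial n_y}\,v\,d\sigma(y)\,dx,
\end{equation}
shows that the $\mf{H}$-bound forces $\Delta_{yy}u\in L^2(D;L^2_\#(Y^*))^2$ and $(\partial u/\partial n_y)_{\tau_y}\in L^2(D;L^2(\partial O))^2$, i.e.\ $\mf{U}\in D(\mf{A})$; the reverse inclusion is immediate. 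With this characterization in hand I would invoke Proposition~A.10, p.~389 of \cite{DPZ}, exactly as before, to conclude that $\mf{A}$ is self-adjoint.

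The main obstacle I anticipate is the careful use of Green's formula fibrewise in $y$ while carrying the $x$-integration and the product-space traces. One must justify that the boundary pairing only sees the tangential components $\o{u}_{\tau_y},\o{v}_{\tau_y}$ — a consequence of the constraint $\o{u}\cdot n_y=0$ built into $\mf{V}$, which annihilates the normal part in the boundary integral — and that the trace and normal-derivative maps behave well as $L^2(D)$-valued operators on $Y^*$. Apart from this bookkeeping, the argument is a straightforward adaptation of Lemma~\ref{lemmaAe}.
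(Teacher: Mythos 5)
Your proposal is correct and follows essentially the same route as the paper: symmetry and coercivity of the bilinear form (the paper likewise uses the pure gradient $\mf{V}$-norm, so no Poincar\'e inequality enters), then the characterization $D(\mf{A})=\{\mf{U}\in\mf{V}\ |\ \langle\mf{A}\mf{U},\mf{V}\rangle\le C\|\mf{V}\|_{\mf{H}}\ \forall\,\mf{V}\in\mf{V}\}$ via Green's formula in $y$, with self-adjointness concluded exactly as in Lemma~\ref{lemmaAe} (the paper's proof of Lemma~\ref{lemmaA} leaves the appeal to Proposition~A.10 of \cite{DPZ} implicit, having stated it there). Your added remarks on the fibrewise trace bookkeeping and the role of the constraint $\o{u}\cdot n_y=0$ are consistent with, and slightly more explicit than, the paper's argument.
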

\begin{proof}
The operator is obviously symmetric and also coercive:
$$\langle \mf{A} \mf{U}, \mf{U}\rangle = \nu \displaystyle\int_D\int_{Y^*} \nabla_y u \nabla_y udy dx+\int_D\int_{\partial O} \alpha \o{u}_{\tau_y} \o{u}_{\tau_y} d \sigma(y) dx \geq c ||\mf{U}||^2_{\mf{V}}.$$
The self-adjointness follows if we show that
$$D(\mf{A}) = \{\mf{U} \in \mf{V}\ |\  \langle \mf{A} \mf{U} ,\mf{V} \rangle \leq C ||\mf{V}||_{\mf{H}} \mbox{ for every } \mf{V} \in \mf{V}\}.$$
But
\begin{equation} 
\begin{split}
\langle \mf{A} \mf{U}, \mf{V}\rangle &\leq C ||\mf{V}||_{\mf{H}^\e}  \Longleftrightarrow\\
\nu \displaystyle\int_{D}\int_{Y^*} \nabla_y u \nabla_y v dx &\leq C||v||_{L^2(D;L^2(Y^*))^2} + C ||v||_{L^2(D;L^2(\partial O))^2}\  \Longleftrightarrow\\
\nu \displaystyle\int_{D}\int_{Y^*} -\Delta u  v dx + \displaystyle \int_D\int_{\partial O} \left(\dfrac{\partial u}{\partial n_y}\right)_{\tau_y} v_{\tau_y} d\sigma &\leq C||v||_{L^2(D;L^2(Y^*))^2} + C ||v||_{L^2(D;L^2(\partial O))^2},\\
\end{split}
\end{equation}
so
\begin{equation}
\begin{split}
&\langle \mf{A} \mf{U}, \mf{V}\rangle \leq C ||\mf{V}||_{\mf{H}^\e} \forall \mf{V} \in \mf{V}^\e \Leftrightarrow\\
&\Delta u \in L^2(D;L^2_\#(Y^*))^2 \mbox{ and } \left(\dfrac{\partial u}{\partial n_y}\right)_{\tau_y} \in L^2(D;L^2(\partial O))^2\Leftrightarrow\\
&\mf{U} \in D(\mf{A}).
\end{split}
\end{equation}
\end{proof}
We define $\mathbf{F} \in L^2(0,T;L^2(D;L^2_\#(Y^*))^2\times L^2(D;L^2(\partial O))^2$ by
\begin{equation}
\label{defF}
\mathbf{F}(t,x)=\left(\begin{array}{c}
f(t,x)\\
0
\end{array}\right),
\end{equation}
and

\begin{equation}
\label{defG}
\mf{G}(t)=\left(\begin{array}{cc}
g_1(t) & 0\\
0 &  g_{22}(t)_{\tau_y} + g_{21}(t)_{\tau_y}
\end{array}\right), \qquad  \mathbf{W}(t)=(W_{1}(t), W_{2}(t)),
\end{equation}
we rewrite the system \eqref{2ssystem} in the compact form

\begin{equation}
\label{2ssystemU}
\left\{
\begin{array}{rll}
d \mf{U}^*(t) &+\mf{A} \mf{U}^* (t) dt =\mathbf{F}(t) dt+\mathbf{G}(t) d \mathbf{W}, \\

\mf{U}^*(0)&=\mf{U}_0^*=\left( \begin{array}{c}
u_0\\
v_0
\end{array}
\right).
\end{array}
\right.
\end{equation}

\begin{theorem}\label{exU*}
Let $\mf{S}^*$ be the semigroup associated with the operator $\mf{A}$. Then,  for any $T>0$, there exists a unique mild solution $\mf{U}^*$ of the equation \eqref{2ssystemU}, $$\mf{U}^*\in L^2(\Omega; C([0,T]; \mf{H})\cap L^{2}([0,T]; \mf{V})),$$
\begin{align}\label{mildU*}
\mf{U}^*(t) = \mf{S}(t)\mf{U}^*(0) +\int_0^t \mf{S}^*(t-s)\mf{F}(s) ds +\int_0^t \mf{S}(t-s) \mf{G}(s) d\mf{W}(s)
\end{align}
which is also a weak solution in the sense:
\begin{align}\label{varU*}
\langle \mf{U}^*(t),\bm{\phi}\rangle &+\int_{0}^{t}\langle \mf{A} \mf{U}^*(s),\bm{\phi}\rangle ds=\langle \mf{U}^*_{0},\bm{\phi}\rangle+\int_{0}^{t} \langle \mf{F}(s),\bm{\phi}\rangle ds + \int_{0}^{t} \langle  \mf{G}(s)  d\mf{W}(s), \bm{\phi}\rangle
\end{align}
for all $t\in [0,T]$, a.e. $\omega\in\Omega$, and for all $\bm{\phi}\in \mf{V}$.
\end{theorem}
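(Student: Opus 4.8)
The plan is to treat \eqref{2ssystemU} as a linear stochastic evolution equation with additive noise and to follow essentially verbatim the argument already carried out for the Ornstein--Uhlenbeck process $\mf{Z}^\e$ in Theorem \ref{exZ}, the only structural difference being the extra deterministic forcing term $\mf{F}$. First I would record that, by Lemma \ref{lemmaA}, $\mf{A}$ is positive and self-adjoint in $\mf{H}$, hence $-\mf{A}$ generates an analytic $C_0$-semigroup $\mf{S}^*(t)$ on $\mf{H}$ (again invoking Proposition A.10 of \cite{DPZ} as in Lemma \ref{lemmaAe}). Since $\mf{F}\in L^2(0,T;\mf{H})$ and the noise coefficient $\mf{G}$ satisfies $\int_0^T\|\mf{G}(s)\|_Q^2\,ds<\infty$ (see below), the existence and uniqueness of the mild solution given by \eqref{mildU*} is then a direct application of Theorem 7.4 of \cite{DPZ}.

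The one point genuinely requiring verification is that $\mf{G}$ is an admissible noise coefficient into the product space $\mf{H}$, with $\sup_{t\in[0,T]}\|\mf{G}(t)\|_Q^2<\infty$. The interior component is covered by the first line of \eqref{eqCT}. Both boundary contributions $g_{21}(t)_{\tau_y}$ and $g_{22}(t)_{\tau_y}$ take values in $L^2(D;L^2(\partial O))^2$: the second line of \eqref{eqCT} bounds the $H^1(D)^2$-norm of $g_{21}$, which controls its $L^2(D;L^2(\partial O))^2$-norm, while the third line bounds $g_{22}$ directly in $L^2(\partial O)^2$. Summing both contributions against the eigenvalues of $Q_2$ gives the required bound, so that the stochastic convolution in \eqref{mildU*} is a well-defined $\mf{H}$-valued process.

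For the announced regularity I would reproduce the a priori estimates of Theorem \ref{exZ}. Applying the It\^o formula to $\|\mf{U}^*(t)\|_{\mf{H}}^2$ gives the energy balance
\begin{align*}
\|\mf{U}^*(t)\|_{\mf{H}}^2 + 2\int_0^t\langle \mf{A}\mf{U}^*(s),\mf{U}^*(s)\rangle\,ds
&= \|\mf{U}_0^*\|_{\mf{H}}^2 + 2\int_0^t\langle \mf{F}(s),\mf{U}^*(s)\rangle\,ds\\
&\quad + 2\int_0^t\langle \mf{G}(s)\,d\mf{W}(s),\mf{U}^*(s)\rangle + \int_0^t\|\mf{G}(s)\|_Q^2\,ds.
\end{align*}
Using the coercivity $\langle \mf{A}\mf{U},\mf{U}\rangle\ge c\|\mf{U}\|_{\mf{V}}^2$ from Lemma \ref{lemmaA}, a Young inequality on the term $2\langle\mf{F},\mf{U}^*\rangle$, and taking expectations (so that the martingale term vanishes), one is left with a Gronwall inequality that produces $\sup_{t\in[0,T]}\E\|\mf{U}^*(t)\|_{\mf{H}}^2<C_T$ together with $\E\int_0^T\|\mf{U}^*(s)\|_{\mf{V}}^2\,ds<C_T$; this last bound is the $L^2([0,T];\mf{V})$ regularity. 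The pathwise supremum bound $\E\sup_{[0,T]}\|\mf{U}^*(t)\|_{\mf{H}}^2<C_T$, and hence $\mf{U}^*\in L^2(\Omega;C([0,T];\mf{H}))$, is obtained exactly as for $\mf{Z}^\e$ by estimating the stochastic term with the Burkholder--Davis--Gundy and Young inequalities before a final Gronwall argument.

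Finally, the equivalence of the mild solution with the weak formulation \eqref{varU*} is standard for a linear equation whose drift is generated by a self-adjoint operator: pairing the mild representation \eqref{mildU*} against a test element $\bm{\phi}\in\mf{V}$, using the self-adjointness of $\mf{A}$ together with a stochastic Fubini argument for the convolution, reproduces \eqref{varU*}. I do not expect any serious obstacle here, since the whole argument is a transcription of Theorem \ref{exZ} with the additional $\mf{F}$ term; the only step deserving real attention is the verification of the Hilbert--Schmidt bound on the boundary component of $\mf{G}$ described above.
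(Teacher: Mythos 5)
Your proposal is correct and follows essentially the same route as the paper, whose entire proof of this theorem is the remark ``Similar to theorem 3.1'' --- i.e., a transcription of the Ornstein--Uhlenbeck argument of Theorem \ref{exZ} (Da Prato--Zabczyk for the mild solution, It\^{o} formula plus Burkholder--Davis--Gundy and Gronwall for the estimates), which is exactly what you carry out, with the deterministic forcing $\mf{F}$ added. Your extra verification that the boundary component of $\mf{G}$ is admissible via \eqref{eqCT} (the $H^1(D)^2$ bound on $g_{21}$ controlling its $L^2(D;L^2(\partial O))^2$ contribution, and the direct bound on $g_{22}$) is a detail the paper leaves implicit, and it is handled correctly.
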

\begin{proof} Similar to theorem 3.1.
\end{proof}
The function $u^*$ satisfies the variational formulation \eqref{2svarfor}. From Lemma \ref{ortog2}, we derive the existence of $P_0 \in L^2(\Omega;C([0,T];H^1(D)))$ and $P_1 \in L^2(\Omega;C([0,T];V_p))$ such that:
\begin{equation}\label{2svarforPP}
\begin{split}
&\int_{D}\int_{Y^*} \left( u^* (\omega,t,x,y) -u_0(\omega,x,y)-\int_{0}^{t}f(s,x)-\int_{0}^{t}g_{1}(s)dW_{1}(s)\right) \phi(x,y) dy dx \\
+&\int_{D}\int_{Y^*} \int_0^t\nu\nabla_y u^*(\omega,s,x,y) \nabla_y\phi(x,y) dsdydx+\int_D\int_{Y^*} \nabla _x P_0(\omega,t,x) \phi(x,y)dydx\\
-&\int_D\int_{Y^*}P_1(\omega,t,x,y)\operatorname{div}_y\phi(x,y)dy dx +\int_D \int_{\partial O } P_1 (\omega,t,x,y) \phi(x,y)\cdot n_y d\sigma(y) dx\\
=&\int_D \int_{\partial O} \left( -u^*(\omega,t,x,y) +v_0 (\omega,x,y) -\int_0^t \alpha(y) u^*(\omega,s,x,y) \right)\phi(x,y) d\sigma(y) dy dx\\
+&\int_D \int_{\partial O} \int_0^t g_{21} (s) dW_2(s) \phi(x,y) d\sigma(y) dx + \int_D \int_{\partial O} \int_0^t  g_{22}(s) dW_2(s) \phi(x,y) d\sigma(y) dx,
\end{split}
\end{equation}
$\mathbb{P}$-a.s., and for every $\phi \in L^2(D;H^1_\#(Y^*))$.

An equivalent variational formulation for $u^*$ can be written using only the pressure $P_0$, but using test functions $\phi\in V^2$:
\begin{equation}\label{2svarforP}
\begin{split}
&\int_{D}\int_{Y^*} \left( u^* (\omega,t,x,y) -u_0(\omega,x,y)-\int_{0}^{t}f(s,x)-\int_{0}^{t}g_{1}(s)dW_{1}(s)\right) \phi(x,y) dy dx \\
+&\int_{D}\int_{Y^*} \int_0^t\nu\nabla_y u^*(\omega,s,x,y) \nabla_y\phi(x,y) dsdydx+\int_D\int_{Y^*} \nabla _x P_0(\omega,t,x) \phi(x,y)dydx\\
=&\int_D \int_{\partial O} \left( -u^*(\omega,t,x,y) +v_0 (\omega,x,y) -\int_0^t \alpha(y) u^*(\omega,s,x,y) \right)\phi(x,y) d\sigma(y) dy dx\\
+&\int_D \int_{\partial O} \int_0^t g_{21} (s) dW_2(s) \phi(x,y) d\sigma(y) dx + \int_D \int_{\partial O} \int_0^t  g_{22}(s) dW_2(s) \phi(x,y) d\sigma(y) dx.
\end{split}
\end{equation}

\subsection{Cell problems}\label{42}
For every $1\leq i \leq 2$, let $e_i$ be the unit vector corresponding to the $i's$ direction. Let $\{w_i^1, q_i^1\}$ be the solution of the following Stokes problem defined in $Y^*$.
\begin{equation}
\label{cell1}
\left\{
\begin{array}{rll}
d w_i^1(t,y) &=\left [ \nu\Delta_{yy} w_i^1(t,y) - \nabla_y q_i^1(t,y)+e_i \right ]dt &\mbox{in}\  [0,T]\times  Y^*, \\
\operatorname{div}_y w_i^1(t,y) &=0&\mbox{in}\  [0,T]\times  Y^*, \\
w_i^1(t,y) \cdot n_y&=0&\mbox{on}\ [0,T]\times  \partial O, \\
d (w_i^1)_{\tau_y} (t,y) &=-\left[\nu\left( \dfrac{\partial w_i^1(t,y)}{\partial n_y}\right)_{\tau_y}  + \alpha(y) (w_i^1)_{\tau_y} (t,y)\right] dt&\mbox{on}\  [0,T]\times  \partial O, \\
w_i^1(0,y)&=0&\mbox{in}\ [0,T]\times  Y^*, \\
w_i^1(0,y)&=0&\mbox{on}\  [0,T]\times  \partial O,\\
\end{array}
\right.
\end{equation}
and $\{w_i^2,q_i^2\}$ the solution of the problem:
\begin{equation}
\label{cell2}
\left\{
\begin{array}{rll}
d w_i^2(t,y) &=\left [ \nu\Delta_{yy} w_i^2(t,y) - \nabla_y q_i^2(t,y) \right ]dt &\mbox{in}\  [0,T]\times  Y^*, \\
\operatorname{div}_y w_i^2(t,y) &=0&\mbox{in}\  [0,T]\times  Y^*, \\
w_i^2(t,y) \cdot n_y&=0&\mbox{on}\ [0,T]\times  \partial O, \\
d (w_i^2)_{\tau_y} (t,y) &=-\left[\nu\left( \dfrac{\partial w_i^2(t,y)}{\partial n_y}\right)_{\tau_y}  + \alpha(y) (w_i^2)_{\tau_y} (t,y) + (e_i)_{\tau_y}\right] dt&\mbox{on}\  [0,T]\times  \partial O, \\
w_i^2(0,y)&=0&\mbox{in}\ [0,T]\times  Y^*, \\
w_i^2(0,y)&=0&\mbox{on}\  [0,T]\times  \partial O.\\
\end{array}
\right.
\end{equation}
Also let $\{w_i^3,q_i^3\}$ be the solution of the stochastic partial differential equation in $Y^*$:

\begin{equation}
\label{cell3}
\left\{
\begin{array}{rll}
d w^3(t,y) &=\left [ \nu\Delta_{yy} w^3(t,y) - \nabla_y q^3(t,y) \right ]dt &\mbox{in}\  [0,T]\times  Y^*, \\
\operatorname{div}_y w^3(t,y) &=0&\mbox{in}\  [0,T]\times  Y^*, \\
w^3(t,y) \cdot n_y&=0&\mbox{on}\ [0,T]\times  \partial O, \\
d (w^3)_{\tau_y} (t,y) &=-\left[\nu\left( \dfrac{\partial w^3(t,y)}{\partial n_y}\right)_{\tau_y}  + \alpha(y) (w^3)_{\tau_y} (t,y) \right] dt\\
 &+ \left[g_{22}\right]_{\tau_y}(t,y)d W_2 &\mbox{on}\  [0,T]\times  \partial O, \\
w^3(0,y)&=0&\mbox{in}\ [0,T]\times  Y^*, \\
w^3(0,y)&=0&\mbox{on}\  [0,T]\times  \partial O.
\end{array}
\right.
\end{equation}

We will define a functional setting to these cell problems following a similar approach introduced for  system \eqref{2ssystem}.
We denote by $\mathcal{V}^Y$ the following space:
\begin{equation}
\label{spaceVY}
\mathcal{V}^Y:=\left\{ \mf{W}=\left(\begin{array}{c} 
w\\
\o{w}
\end{array}\right)
\in C_\#^\infty\left(\overline{Y^*}\right)^2\times \gamma^Y(C_\#^\infty\left(\overline{Y^*}\right)^2)\ \ |\ \ \mbox{div }u=0,\ \o{u}=u_{\partial O},\ \overline{u}\cdot n_y =0 \mbox{ on } \partial O \right\},
\end{equation}
and by $\mf{V}^Y$ and $\mf{H}^Y$ the closure of this space in $\mf{H}^1_Y = H^1_\#(Y^*) \times H^{1/2}(\partial O)$ and in 

$\mf{L}^2_Y = L^2_\#(Y^*)\times L^2(\partial O)$. The operator associated to these equation will be denoted by $\mf{A}^Y$, where:
\begin{equation}
\label{defAY}
\mf{A}^Y \mf{W}=\mf{A}^Y \left( \begin{array}{c}
w\\
\o{w}
\end{array}\right)
=
\left(\begin{array}{c}
-\nu  \Delta w\\
\nu  \left(\dfrac{\partial w}{\partial n_y}\right)_{\tau_y}+\alpha\o{w}_{\tau_y}
\end{array}\right).
\end{equation}

Using the semigroup $\mf{S}^Y$ associated with the operator $\mf{A}^Y$, the solutions of these equations can be writen as:
\begin{equation}
\label{w1-cell}
w_i^1(t,y) = \Pi^Y\int_0^t \mf{S}^Y(t-s) 
\operatorname{Proj}_{\mf{V}^{Y}}\left(
\begin{array}{cc}
e_i\\
0
\end{array}
\right)
 ds = \Pi^Y\int_0^t \mf{S}^Y(s) 
\operatorname{Proj}_{\mf{V}^{Y}}\left(
\begin{array}{cc}
e_i\\
0
\end{array}
\right)
ds,
\end{equation}

\begin{equation}
\label{w2}
w_i^2(t,y) = \Pi^Y\int_0^t \mf{S}^Y(t-s) 
\operatorname{Proj}_{\mf{V}^{Y}}\left(
\begin{array}{cc}
&0\\
&(e_i)_{\tau_y}
\end{array}
\right)
 ds= \Pi^Y\int_0^t \mf{S}^Y(s) 
 \operatorname{Proj}_{\mf{V}^{Y}}\left(
\begin{array}{cc}
&0\\
&(e_i)_{\tau_y}
\end{array}
\right)
 ds ,
\end{equation}

\begin{equation}
\label{w3}
w_i^3(t,y) =  \Pi^Y\int_0^t \mf{S}^Y(t-s)
\operatorname{Proj}_{\mf{V}^{Y}}\left(
\begin{array}{cc}
&0\\
&\left[g_{22}\right]_{\tau_y}
\end{array}
\right)
dW_2(s),
\end{equation}
where by $\Pi^Y$ denotes the projection of $\mf{L}^2_Y$ onto the first component.
\begin{theorem}
\label{thform}
The solution of the system \eqref{2ssystem} is given by:
\begin{equation}\label{u*}
\begin{split}
u^*(t,x,y) &= w^0(t,x,y) + \sum_{i=1}^2 \int_0^t \dfrac{dw_i^1}{dt}(t-s,y)f_i(s,x) ds\\
 &+ \sum_{i=1}^2\int_0^t \dfrac{dw_i^1}{dt}(t-s,y)(g_1)_i(s) d W_1(s)-\sum_{i=1}^2\int_0^t \dfrac{d^2w_i^1}{dt^2}(t-s,y) \dfrac{\partial P_0}{\partial x_i}(s,x) ds\\ 
& + \sum_{i=1}^2\dfrac{\partial P_0}{\partial x_i}(t,x) \dfrac{dw_i^1}{dt}(0,y)\\
& + \sum_{i=1}^2\int_0^t\dfrac{dw_i^2}{dt}(t-s,y) (g_{21})_i(s) dW_2(s) + w^3(t,y),
\end{split}
\end{equation}
where $w^0(t,x,y) = \Pi^Y \mf{S}^Y(t)\left(
\begin{array}{cc}
&u_0(x,\cdot)\\
&v_0(x,\cdot)
\end{array}
\right)(y).$
\end{theorem}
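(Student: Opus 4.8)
The plan is to exploit the linearity of the abstract evolution \eqref{2ssystemU} together with the uniqueness of its solution (Theorem \ref{exU*}), and to recognise each of the cell problems \eqref{cell1}--\eqref{cell3} as the elementary response of the $y$-fibre dynamics to one of the data entering the system. The key structural observation is that, for the fixed macroscopic pressure $P_0$ produced by Lemma \ref{ortog2} in the variational formulation \eqref{2svarforP}, the limit field $u^*(\cdot,x,\cdot)$ solves, for almost every $x\in D$, a Stokes-type evolution posed on the reference cell $Y^*$ whose interior data are $f(t,x)$ and the pressure force $-\nabla_x P_0$, whose interior noise is $g_1\,dW_1$, whose boundary data are $g_{21},g_{22}$, and whose initial datum is $(u_0(x,\cdot),v_0(x,\cdot))$. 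Accordingly I would write $u^*$ fibrewise through Duhamel's principle against the cell semigroup $\mf{S}^Y$ of Lemma \ref{lemmaA}, and then match each convolution against the explicit semigroup representations \eqref{w1-cell}, \eqref{w2} and \eqref{w3}.

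First I would dispose of the terms handled by a direct (deterministic or stochastic) Duhamel formula. The initial datum contributes $w^0(t,x,y)=\Pi^Y\mf{S}^Y(t)(u_0(x,\cdot),v_0(x,\cdot))(y)$. Since $\tfrac{dw_i^1}{dt}(\tau,y)=\Pi^Y\mf{S}^Y(\tau)\operatorname{Proj}_{\mf{V}^Y}\binom{e_i}{0}$ is exactly the impulse response to a unit interior force in the direction $e_i$, the body force $f$ and the interior noise $g_1$ produce $\sum_i\int_0^t\tfrac{dw_i^1}{dt}(t-s,y)f_i(s,x)\,ds$ and the stochastic convolution $\sum_i\int_0^t\tfrac{dw_i^1}{dt}(t-s,y)(g_1)_i(s)\,dW_1(s)$, the latter being legitimate by It\^{o}'s isometry and the trace-class assumption \eqref{eqCT}. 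For the boundary noise I would use that, for fixed $(s,x)$, $g_{21}(s,x)$ is independent of the cell variable $y$, so its tangential trace on $\partial O$ equals $\sum_i(g_{21})_i(s,x)(e_i)_{\tau_y}$; matching this against \eqref{w2} gives $\sum_i\int_0^t\tfrac{dw_i^2}{dt}(t-s,y)(g_{21})_i(s)\,dW_2(s)$. By contrast $g_{22}$ is genuinely $y$-dependent on $\partial O$ but independent of $x$, so its contribution is not a superposition of unit responses; it is precisely the $x$-independent field $w^3(t,y)$ of \eqref{w3}, which already carries the stochastic boundary forcing $[g_{22}]_{\tau_y}\,dW_2$.

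The only delicate term is the macroscopic pressure, and I expect this to be the main obstacle. The contribution of $-\nabla_x P_0$ to $u^*$ is \emph{formally} the convolution $-\sum_i\int_0^t\tfrac{dw_i^1}{dt}(t-s,y)\,\partial_{x_i}p(s,x)\,ds$ against the instantaneous pressure $p$; however, as emphasised throughout the paper, $p$ has no pointwise meaning in time, and only the once-integrated pressure $\partial_{x_i}P_0(t,x)=\int_0^t\partial_{x_i}p(s,x)\,ds$ is available, merely (H\"older) continuous in $t$ with $P_0(0)=0$. I would therefore read the convolution as a Stieltjes integral in $d[\partial_{x_i}P_0(s,x)]$ and integrate by parts in $s$, transferring the time derivative onto the kernel. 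Using $\tfrac{d}{ds}\tfrac{dw_i^1}{dt}(t-s,y)=-\tfrac{d^2w_i^1}{dt^2}(t-s,y)$, the vanishing of $P_0$ at $s=0$, and the value $\tfrac{dw_i^1}{dt}(0,y)=\Pi^Y\operatorname{Proj}_{\mf{V}^Y}\binom{e_i}{0}$ at $s=t$, this reproduces the two pressure contributions appearing in \eqref{u*}, namely the kernel $-\sum_i\int_0^t\tfrac{d^2w_i^1}{dt^2}(t-s,y)\,\partial_{x_i}P_0(s,x)\,ds$ and the boundary term $\sum_i\partial_{x_i}P_0(t,x)\,\tfrac{dw_i^1}{dt}(0,y)$.

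Making this last step rigorous is where the real work lies. Because $\mf{S}^Y$ is analytic (Lemma \ref{lemmaA}), $\tfrac{d^2w_i^1}{dt^2}(\tau,y)=-\Pi^Y\mf{A}^Y\mf{S}^Y(\tau)\operatorname{Proj}_{\mf{V}^Y}\binom{e_i}{0}$ is well defined for $\tau>0$ but carries a nonintegrable singularity $\|\mf{A}^Y\mf{S}^Y(\tau)\|\lesssim\tau^{-1}$ as $\tau\to0^+$; the kernel convolution must therefore be interpreted after subtracting the boundary value, i.e. as $\int_0^t\tfrac{d^2w_i^1}{dt^2}(t-s,y)\bigl(\partial_{x_i}P_0(s,x)-\partial_{x_i}P_0(t,x)\bigr)\,ds$, whose absolute convergence I would secure from the H\"older-in-time regularity of $P_0$ (a Dini-type balance of $\tau^{-1}$ against the modulus of continuity of $P_0$). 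Rather than justifying the pointwise Duhamel identity, I would then carry the whole computation inside the time-integrated variational formulation \eqref{2svarforP}: insert the candidate \eqref{u*}, test against $\phi\in V^2$, and use that each $w_i^1,w_i^2,w^3$ obeys its own weak cell formulation to collapse the expression to \eqref{2svarforP}. Uniqueness in Theorem \ref{exU*} then identifies the candidate with $u^*$ and closes the argument.
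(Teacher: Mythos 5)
Your proposal converges, in its closing step, on exactly the paper's strategy: insert the candidate \eqref{u*} into the time-integrated variational formulation \eqref{2svarforP}, use the weak cell formulations of $w_i^1$, $w_i^2$, $w^3$ to evaluate term by term, and identify the result with $u^*$ (the paper writes $u^*=\sum_{i=1}^7 I_i$ and computes $\mathcal{L}(I_i,\phi)$ for each). What differs is the surrounding narrative and where you put the technical weight, and the comparison is instructive in both directions. Your Duhamel/impulse-response reading, with the pressure contribution interpreted as a Stieltjes convolution against $d[\nabla_x P_0]$ and integrated by parts in $s$, \emph{explains} why the two pressure terms in \eqref{u*} appear; the paper never performs this derivation, but instead differentiates the weak cell formulation \eqref{varforw1} of $w_i^1$ twice in time, multiplies by $-\partial_{x_i}P_0$, and verifies directly that $\mathcal{L}(I_4,\phi)+\mathcal{L}(I_5,\phi)=-\int_D\int_{Y^*}\nabla_x P_0(t,x)\,\phi\,dy\,dx$. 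Your concern about the $\tau^{-1}$ singularity of $\tfrac{d^2w_i^1}{dt^2}(\tau)=-\Pi^Y\mf{A}^Y\mf{S}^Y(\tau)\operatorname{Proj}_{\mf{V}^Y}(e_i,0)$ is genuine and the paper is silent on it; note, though, that your proposed cure requires a Dini/H\"older modulus of continuity in time for $P_0$, while the paper only furnishes $P_0\in L^2(\Omega;C([0,T];H^1(D)))$ — so your instinct to abandon the pointwise Duhamel identity and work entirely inside the integrated formulation, where the singular kernel never acts alone, is the right (and the paper's implicit) resolution. Conversely, you underweight the step that constitutes the actual bulk of the paper's proof: ``collapsing'' the convolution terms $I_2$, $I_3$, $I_6$ inside \eqref{2svarforP} requires exchanging the outer $ds$-integral with the (stochastic) convolution, i.e.\ Lemma \ref{chvar} and, crucially, its stochastic counterpart Lemma \ref{chvar'}, $\int_0^t\int_0^{t-s}a(r)g(s)\,dr\,dW(s)=\int_0^t\int_0^s a(s-r)g(r)\,dW(r)\,ds$, which the paper proves at length via an increment estimate and an a.s.-zero-derivative argument. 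Your It\^{o}-isometry remark only secures well-definedness of the stochastic convolutions, not this exchange; to complete your argument you would need to supply it — for instance via the stochastic Fubini theorem of Da Prato--Zabczyk, which in fact yields a shorter route than the paper's hands-on proof — after which your verification coincides with the paper's and uniqueness from Theorem \ref{exU*} closes the argument as you say.
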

\begin{proof}
It is easy to verify that $u^*$ given above satisfies $\operatorname{div}_y u^* = 0$ in $Y^*$, and $u^* \cdot n_y =0$ on $\partial O$. Thus,  to show that it is the solution for the two scale system \eqref{2ssystem} it is sufficient to show that  $u^*$ satisfies the variational formulation \eqref{2svarforP} for a.s. $\omega\in\Omega$.

We write 
$$u^*(t,x,y)=\sum_{i=1}^7 I_i(t,x,y),$$
take a test function $\phi \in V^Y = \Pi^Y \mf{V}^Y$ and compute for each term
\begin{equation}\nonumber
\begin{split}
\mathcal{L} (I_i,\phi) =&\int_{D}\int_{Y^*} \left( I_i (t,x,y) -(I_i)_0( x,y)\right) \phi(x,y) dy dx \\
&+\int_{D}\int_{Y^*} \int_0^t\nu\nabla_y I_i(s,x,y) \nabla_y\phi(x,y) dsdydx\\
&+\int_D \int_{\partial O} \left( I_i(t,x,y) -(I_i)_0 (x,y) +\int_0^t \alpha(y) I_i(s,x,y)ds \right)\phi(x,y) d\sigma(y) dx,
\end{split}
\end{equation}
where $(I_i)_0( x,y)=I_i(0, x,y)$.

By its definition, $w^0(t,x,y)$ satisfies the variational formulation
\begin{equation}\label{varforw0}
\begin{split}
&\int_{Y^*} \left( w^0 (t,x,y) -u_0(x,y)\right) \phi(y) dy +\int_{Y^*} \int_0^t\nu\nabla_y w^0(s,y) \nabla_y\phi(y) dsdy\\
+& \int_{\partial O} \left( w^0(t,x,y) -v_0 (x,y) +\int_0^t \alpha(y) w^0(s,y) \right)\phi(y) d\sigma(y) =0 ,
\end{split}
\end{equation}
for almost all $x\in D$, and for every $\phi \in V^Y$. We obtain from here that $\mathcal{L}(I_1,\phi) = 0$.

To  compute $\mathcal{L}(I_2,\phi)$, we use the variational formulation of $w_i^1$:
\begin{equation}\label{varforw1}
\begin{split}
&\int_{Y^*} \left( w_i^1 (t,y) -w_i^1(0,y)\right) \phi(x,y) dy +\int_{Y^*} \int_0^t\nu\nabla_y w_i^1(s,y) \nabla_y\phi(x,y) dsdy\\
+& \int_{\partial O} \left( w_i^1(t,y) -w_i^1 (0,y) +\int_0^t \alpha(y) w_i^1(s,y) \right)\phi(x,y) d\sigma(y) = \int_0^t \int_{Y^*}e_i \phi(x,y)dydt,
\end{split}
\end{equation}
for almost all $x\in D$, and for every $\phi \in V^Y$. We differentiate in time and take $t=t-s$ to get:
\begin{equation}\nonumber
\begin{split}
&\int_{Y^*} \dfrac{ d w_i^1}{dt} (t-s,y) \phi(x,y) dy +\int_{Y^*} \nu\nabla_y w_i^1(t-s,y) \nabla_y\phi(x,y) dy\\
+& \int_{\partial O} \left( \dfrac{ d w_i^1}{dt}(t-s,y) + \alpha(y) w_i^1(t-s,y) \right)\phi(x,y) d\sigma(y) = \int_{Y^*}e_i \phi(x,y)dy\Longleftrightarrow
\end{split}
\end{equation}
\begin{equation}\nonumber
\begin{split}
&\int_{Y^*} \dfrac{ d w_i^1}{dt} (t-s,y) \phi(x,y) dy +\int_{Y^*} \nu\int_0^{t-s}\nabla_y \dfrac{ d w_i^1}{dt}(r,y) \nabla_y\phi(x,y) drdy\\
+& \int_{\partial O} \left( \dfrac{ d w_i^1}{dt}(t-s,y) + \alpha(y) \int_0^{t-s } \dfrac{ d w_i^1}{dt}(r,y) \right)\phi(x,y) dr d\sigma(y) = \int_{Y^*}e_i \phi(x,y)dy.
\end{split}
\end{equation}
Now we multiply with $f_i(s,x)$, integrate over $D \times[0,t]$ and sum over $i\in\{1,2\}$ to get:
\begin{equation}\nonumber
\begin{split}
&\int_D\int_{Y^*}I_2(t,x,y) \phi(x,y)dydx +\sum_{i=1}^2\int_D\int_{Y^*}\int_0^t \nu\int_0^{t-s}\nabla_y \dfrac{ d w_i^1}{dt}(r,y) \nabla_y\phi(x,y) f_i(s,x) drdsdydx\\
+& \int_D \int_{\partial O}  I_2(t,x,y) \phi(x,y)dy dx + \sum_{i=1}^2\int_D \int_{\partial O} \int_0^t \alpha(y) \int_0^{t-s } \dfrac{ d w_i^1}{dt}(r,y)\phi(x,y)f_i(s,x) dr ds d\sigma(y)dx\\
=& \int_D \int_{Y^*} \int_0^t f(s,x) \phi(x,y) dsdydx.
\end{split}
\end{equation}
We now use the following lemma to derive that $\mathcal{L}(I_2,\phi) = \displaystyle\int_D \int_{Y^*} \int_0^t f(s,x) \phi(x,y) dsdydx$.
\begin{lemma}
\label{chvar}
Let $0<T$ and $a,b \in C(0,T)$. Then:
$$\int_0^t \int_0^{t-s} a(r) b(s) dr ds = \int_0^t \int_0^s a(s-r) b(r)drds,$$
for all $t\in[0,T]$.
\end{lemma}
\begin{proof}
Both sides of the equality are differentiable functions of $t$, equal to $0$ when $t=0$, and 
\begin{equation}\nonumber
\dfrac{d } {dt} \displaystyle \int_0^t \int_0^{t-s} a(r) b(s) dr ds = \int_0^t a(t-s) b(s) ds = \dfrac{d } {dt} \int_0^t \int_0^s a(s-r) b(r)drds.
\end{equation}
\end{proof}
To show that $\mathcal{L}(I_3,\phi) = \displaystyle\int_D \int_{Y^*} \int_0^t g_1(s) \phi(x,y) dW(s)dydx$ we will use similar calculations and the following stochastic version of the Lemma \ref{chvar}:
\begin{lemma}
\label{chvar'}
Let $0<T$ and $a\in C(0,T)$. Also let $g \in C(0,T ; L_Q(K,H))$ where $K$ and $H$ are two separable Hilbert spaces and $Q$ is a linear positive operator in $K$ of trace class, and let $(W(t))_{t\geq 0}$ be a $K$-valued Wiener process. 

Then for all $t\in[0,T]$ and $\mathbb{P}$ a.s.

$$\int_0^t \int_0^{t-s} a(r) g(s) dr dW(s) = \int_0^t \int_0^s a(s-r) b(r)dW(r)ds,$$
\end{lemma}
\begin{proof}
We denote $f(t,\omega) = \displaystyle \int_0^t \int_0^{t-s} a(r) g(s) dr dW(s) - \int_0^t \int_0^s a(s-r) b(r)dW(r)ds$ and we will show first that
\begin{equation}\label{est1}
\lim_{\tau \searrow 0} \dfrac{\E   \displaystyle\sup_{0\leq t_2-t_1\leq \tau}|f(t_2,\omega) - f(t_1,\omega)| } {\tau}=0.
\end{equation}
\begin{equation}\nonumber
\begin{split}
f(t_2,\omega) - f(t_1,\omega) = &\int_{t_1}^{t_2} \int_0^{t_2-s} a(r) g(s) dr dW(s) +\int_{0}^{t_1} \int_{t_1-s}^{t_2-s}  a(r) g(s) dr dW(s)\\
- &\int_{t_1}^{t_2} \int_0^s a(s-r) g(r)dW(r)ds,
\end{split}
\end{equation}
so by adding and substracting $\displaystyle\int_{0}^{t_1} \int_{t_1}^{t_2}a(t_1-s) g(s) dr dW(s)$
\begin{equation}\nonumber
\begin{split}
 f(t_2,\omega) - f(t_1,\omega) = &\int_{t_1}^{t_2} \int_0^{t_2-s} a(r) g(s) dr dW(s) +\int_{0}^{t_1} \int_{t_1}^{t_2} (a(r-s)-a(t_1-s)) g(s) dr dW(s)\\
- &\int_{t_1}^{t_2} (\int_0^s a(s-r) g(r)dW(r) - \int_0^{t_1} a(t_1-r) g(r)dW(r))ds\\
=&\int_{t_1}^{t_2} \int_0^{t_2-s} a(r) g(s) dr dW(s) +\int_{0}^{t_1} \int_{t_1}^{t_2} (a(r-s)-a(t_1-s)) g(s) dr dW(s)\\
 -&\int_{t_1}^{t_2} \int_0^{t_1} (a(s-r)-a(t_1-r)) g(r)dW(r) ds-\int_{t_1}^{t_2} \int_{t_1}^{s} a(s-r) g(r)dW(r)ds.
\end{split}
\end{equation}
From here
\begin{equation}\nonumber
\begin{split}
 |f(t_2,\omega) - f(t_1,\omega)|^2 \leq & C|\int_{0}^{t_2-t_1} \int_0^{t_2-t_1-s} a(r) g(s+t_1) dr dW(s)|^2\\ +&C|\int_{0}^{t_1} \int_{t_1}^{t_2} (a(r-s)-a(t_1-s)) g(s) dr dW(s)|^2\\
 +&C |\int_{t_1}^{t_2} \int_0^{t_1} (a(s-r)-a(t_1-r)) g(r)dW(r)ds|^2\\
+&C|\int_{t_1}^{t_2} \int_{t_1}^{s} a(s-r) g(r)dW(r)ds|^2,
\end{split}
\end{equation}
and by applying stochastic Fubini theorem:
\begin{equation}\nonumber
\begin{split}
 |f(t_2,\omega) - f(t_1,\omega)|^2 \leq & C|\int_0^{t_2-t_1-s}  \int_{0}^{t_2-t_1}  a(r) g(s+t_1) dW(s) dr|^2\\ +&C|\int_{t_1}^{t_2} \int_{0}^{t_1} (a(r-s)-a(t_1-s)) g(s)  dW(s)dr|^2 \\
 +&C |\int_{t_1}^{t_2} \int_0^{t_1} (a(s-r)-a(t_1-r)) g(r)dW(r)ds|^2\\
+&C|\int_{t_1}^{t_2} \int_{t_1}^{s} a(s-r) g(r)dW(r)ds|^2\\
\leq &C(t_2-t_1)^2\sup_{0\leq r\leq t_2-t_1} |a(r)|^2\left|\int_0^{t_2-t_1} g(s+t_1) dr dW(s)\right|^2\\
 +& C(t_2-t_1)^2 \sup_{r\in[t_1,t_2]}|\int_{0}^{t_1} (a(r-s)-a(t_1-s)) g(s)  dW(s)|^2\\
+&C (t_2-t_1)^2 \sup_{s\in[t_1,t_2]}|\int_0^{t_1} (a(s-r)-a(t_1-r)) g(r)dW(r)|^2\\
+&C(t_2-t_1)^2 \sup_{s\in[t_1,t_2]}|\int_{t_1}^{s} a(s-r) g(r)dW(r)|^2.
\end{split}
\end{equation}
Now, we estimate $\E \displaystyle\sup_{0\leq t_2-t_1\leq \tau}|f(t_2,\omega) - f(t_1,\omega)|^2$ using Burkholder-Davis-Gundy Inequality and It\^{o}'s isometry:
\begin{equation}\nonumber
\begin{split}
\E \displaystyle\sup_{0\leq t_2-t_1\leq \tau}|f(t_2,\omega) - f(t_1,\omega)|^2 
\leq & C\tau^2 \E \left( \int_0^\tau \| g(s+t_1) \|^2_Q ds \right)
+ C\tau^2 \sup_{|s_1-s_2|\leq\tau}|a(s_1)-a(s_2)|^2  \\
 \leq& C\tau^3 + C \tau^2\sup_{|s_1-s_2|\leq\tau}|a(s_1)-a(s_2)|^2.
\end{split}
\end{equation}
which shows \eqref{est1}.

We will now derive that $\mathbb{P}$ a.s.:
\begin{equation}\label{est2}
\lim_{t_2-t_1 \searrow 0} \dfrac{ |f(t_2,\omega) - f(t_1,\omega)| } {t_2-t_1}=0.
\end{equation}

We define the positive and increasing function of $t \in (0,T]$, $\alpha$ by
$$\alpha^2(t) =  \sup_{0\leq\tau\leq t}\dfrac{ \E \displaystyle\sup_{0\leq t_2-t_1 \leq \tau}|f(t_2,\omega) - f(t_1,\omega)| } {\tau},$$
and \eqref{est1} implies that 
$$\lim_{t\searrow 0} \alpha (t) =0.$$
For any $0<\e <T-t_1$ we choose a sequence  $(r_n^\e)_{n\geq 1}$, $r_n^\e \searrow 0$ such that
$$\sum_{n=1}^\infty \alpha(r_n^\e) \leq \e.$$
Let $\Omega_n^\e$ be the set:
$$\{\omega\in\Omega \ | \ \sup_{0\leq t_2-t_1 \leq r_n^\e} |f(t_2,\omega) - f(t_1,\omega)| \geq \alpha(r_n^\e)(t_2-t_1)\},$$
which by Chebychev inequality has $\mathbb{P}(\Omega_n^\e) \leq \alpha(r_n^\e).$ On the set $\Omega^\e = \Omega\setminus\bigcup_{n\geq 1} \Omega^\e_n$ we have:
$$|f(t_2,\omega)-f(t_1,\omega)| \leq \alpha(r_n^\e)|t_2-t_1|,$$
for every $t_1,t_2\in[0,T]$ such that $|t_2-t_1|\leq r_n^\e$. Since $\mathbb{P}(\Omega^\e)\geq 1-\e$ we proved \eqref{est2}.

Since \eqref{est2} implies that $\mathbb{P}$ a.s. the function $f(t,\omega)$ has the derivative $f'(t,\omega)$ equal to $0$ in $[0,T]$ and since $f(0,\omega)=0$ then $\mathbb{P}$ a.s. $f(t,\omega)=0$ for every $t\in[0,T]$, and the Lemma is proved.\end{proof}

To compute $\mathcal{L}(I_4,\phi)$ we use \eqref{varforw1}, differentiate twice with respect to $t$ to get:
\begin{equation}\nonumber
\begin{split}
&\int_{Y^*} \dfrac{ d^2 w_i^1}{dt^2} (t-s,y) \phi(x,y) dy +\int_{Y^*} \nu\nabla_y \dfrac{ d w_i^1}  { dt }(t-s,y) \nabla_y\phi(x,y) dy\\
+& \int_{\partial O} \dfrac{ d^2 w_i^1}{dt^2 }(t-s,y)\phi(x,y) d\sigma(y) + \int_{\partial O}\alpha(y) \dfrac{ d w_i^1 } {dt }(t-s,y)\phi(x,y) d\sigma(y) = 0.
\end{split}
\end{equation}
We rewrite the equation obtained as
\begin{equation}\nonumber
\begin{split}
&\int_{Y^*} \dfrac{ d^2 w_i^1}{dt^2} (t-s,y) \phi(x,y) dy +\int_{Y^*} \int_0^{t-s}\nu\nabla_y \dfrac{ d^2 w_i^1}  { dt^2 }(r,y) \nabla_y\phi(x,y)dr dy\\
+& \int_{\partial O} \dfrac{ d^2 w_i^1}{dt^2 }(t-s,y)\phi(x,y) d\sigma(y) + \int_{\partial O}\alpha(y) \int_0^{t-s}\dfrac{ d^2 w_i^1 } {dt^2 }(r,y)\phi(x,y) dr d\sigma(y)\\
+&\int_{Y^*} \nu\nabla_y \dfrac{ d w_i^1}  { dt }(0,y) \nabla_y\phi(x,y) dy+ \int_{\partial O}\alpha(y) \dfrac{ d w_i^1 } {dt }(0,y)\phi(x,y) d\sigma(y)= 0.
\end{split}
\end{equation}

We multiply with $-\dfrac { d P_0 } { dx_i } (s,x)$, integrate over $[0,t] \times D$, and apply Lemma \ref{chvar} to get that 
\begin{equation}\begin{split}
\mathcal{L}(I_4,\phi)&=\sum_{i=1}^2\int_0^t\int_D\int_{Y^*} \nu\nabla_y \dfrac{ d w_i^1}  { dt }(0,y) \dfrac { d P_0 } { dx_i } (s,x)\nabla_y\phi(x,y) dydxds\\
&+ \sum_{i=1}^2\int_0^t\int_D\int_{\partial O}\alpha(y) \dfrac{ d w_i^1 } {dt }(0,y)\dfrac { d P_0 } { dx_i } (s,x)\phi(x,y) d\sigma(y)dxdts\\
&=\sum_{i=1}^2\int_0^t\int_D\int_{Y^*} \nu\nabla_y \dfrac{ d w_i^1}  { dt }(0,y) \dfrac { d P_0 } { dx_i } (s,x)\nabla_y\phi(x,y) dydxds,
\end{split}
\end{equation}
using the system \eqref{cell1}. 

\begin{equation}
\nonumber
\begin{split}
\mathcal{L}(I_5,\phi) &= \sum_{i=1}^2\int_D\int_{Y^*} -\dfrac{ d P_0 } {d x_i }(t,x) \dfrac { d w_i^1 } { dt } (0,y) \phi(x,y) dydx\\
& + \sum_{i=1}^2\int_D\int_{\partial O} -\dfrac{ d P_0 } {d x_i }(t,x) \dfrac { d w_i^1 } { dt } (0,y) \phi(x,y) d\sigma(y) dx\\
&- \sum_{i=1}^2 \int_0^t \int_D \int_{Y^*} \nu \dfrac{ d P_0 } {d x_i }(s,x)\dfrac { d \nabla_y w_i^1 } { dt } (0,y)\nabla_y  \phi(x,y) dy dsdx\\
&- \sum_{i=1}^2 \int_0^t \int_D \int_{\partial O} \alpha(y)\dfrac{ d P_0 } {d x_i }(s,x)\dfrac { d  w_i^1 } { dt }  (0,y)  \phi(x,y) d\sigma(y) dsdx\\
&=\sum_{i=1}^2\int_D\int_{Y^*} -\dfrac{ d P_0 } {d x_i }(t,x) (e_i-\nabla_y q_i(y)) \phi(x,y) dydx\\
&- \sum_{i=1}^2 \int_0^t \int_D \int_{Y^*} \nu \dfrac{ d P_0 } {d x_i }(s,x)\dfrac { d \nabla_y w_i^1 } { dt } (0,y)\nabla_y  \phi(x,y) dy dsdx.
\end{split}
\end{equation}
This implies that
\begin{equation}
\begin{split}
\mathcal{L}(I_4,\phi)+\mathcal{L}(I_5,\phi) = -\int_D \int_{Y^*} \nabla_x P_0(t,x) \phi(x,y) dy dx.
\end{split}
\end{equation}
The fact that $\mathcal{L}(I_6,\phi) = \displaystyle\int_D \int_{Y^*} \int_0^t g_{21}(s) dW_2(s) \phi(x,y) dy dx$ is done similarly to  $\mathcal{L}(I_3,\phi)$ and by applying lemma \eqref{chvar'}.

From the variational formulation for $w^3$ we infer that $\mathcal{L}(I_7,\phi) = \displaystyle\int_D \int_{Y^*} \int_0^t g_{22}(s) dW_2(s) \phi(x,y) dy dx$. Summing all these equation we get that $u^*$ given by \eqref{u*} is the solution of \eqref{2svarforP}.
\end{proof}
\section{Main results and concluding remarks}

\subsection{Main results} Let us summarize the main results of this paper obtained and proved  in Section 5 and Section 6.

\begin{theorem}\label{main1}
The extension of process $U^\e= \left( \begin{array}{c}
u^\e\\
\o{u}^\e
\end{array}\right)$ 
solution of system \eqref{systemu} two-scale converges to the  process $U^*=\left( \begin{array}{c}
u^*\\
\o{u}^*
\end{array}\right)$. Moreover, there exists $P_0 \in L^2(\Omega;C([0,T];H^1(D)))$ such that
$\mathbb{P}$-a.s., and for every $\phi\in V^2$, the process
$u^*$ satisfies the variational formulation previously defined in Section 6, that is 

\begin{equation}\label{2svarforP-bis}
\begin{split}
&\int_{D}\int_{Y^*} \left( u^* (\omega,t,x,y) -u_0(\omega,x,y)-\int_{0}^{t}f(s,x)-\int_{0}^{t}g_{1}(s)dW_{1}(s)\right) \phi(x,y) dy dx \\
+&\int_{D}\int_{Y^*} \int_0^t\nu\nabla_y u^*(\omega,s,x,y) \nabla_y\phi(x,y) dsdydx+\int_D\int_{Y^*} \nabla _x P_0(\omega,t,x) \phi(x,y)dydx\\
=&\int_D \int_{\partial O} \left( -u^*(\omega,t,x,y) +v_0 (\omega,x,y) -\int_0^t \alpha(y) u^*(\omega,s,x,y) \right)\phi(x,y) d\sigma(y) dy dx\\
+&\int_D \int_{\partial O} \int_0^t g_{21} (s) dW_2(s) \phi(x,y) d\sigma(y) dx + \int_D \int_{\partial O} \int_0^t  g_{22}(s) dW_2(s) \phi(x,y) d\sigma(y) dx.
\end{split}
\end{equation}
\end{theorem}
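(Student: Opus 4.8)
The plan is to recognize that Theorem \ref{main1} is a synthesis of the convergence results already assembled in Sections \ref{section5} and \ref{section6}, so the proof consists of organizing those pieces into the two assertions of the statement rather than producing new estimates. I would split the argument into the two-scale convergence of $\mf{U}^\e=(u^\e,\o{u}^\e)$ and the derivation of the reduced variational formulation with its pressure.

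First I would establish the two-scale convergence. The uniform estimate \eqref{estU} of Theorem \ref{exU} bounds $\widetilde{u}^\e$ in $L^2(\Omega\times[0,T]\times D)^2$ and $\e\widetilde{\nabla u}^\e$ in the corresponding matrix-valued space, uniformly in $\e$; applying the compactness Theorems \ref{2scex}, \ref{2scex1} and \ref{2scgrad} yields, along a subsequence, $\widetilde{u}^\e\overset{2-s}{\longrightarrow}u^*$, the extended gradient two-scale converging to $\xi=\nabla_y u^*\,\mathds{1}_{Y^*}$, and the trace $u^\e_{|\partial O^\e}$ two-scale converging to $\o{u}^*=u^*_{|\partial O}$, as recorded in \eqref{2s2}--\eqref{2s3}. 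The structural constraints \eqref{u*1}--\eqref{u*3} ($u^*=0$ in $O$, $\operatorname{div}_y u^*=0$, and $\operatorname{div}_x\int_Y u^*\,dy=0$) place $u^*$ in the limit space $V$ of Section \ref{section6}; since the eventual limit is characterized uniquely through its variational formulation, the convergence holds for the whole sequence.

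Next I would pass to the limit in the variational formulation. Starting from \eqref{varu} (Corollary \ref{Pe}) and inserting the separated test function $\phi(\omega,t,x,x/\e)=\phi_1(\omega)\phi_2(t)\phi_3(x,x/\e)$ with the admissible structure required in Section \ref{section5}, I combine the elementary limits \eqref{lim1}--\eqref{lim3}, \eqref{lim5}, \eqref{lim7} with the boundary and stochastic limit Lemmas \ref{lemmalim6}, \ref{lemmalim4}, \ref{lemmalim8}, \ref{lemmalim9}. Sending each term to its two-scale limit produces \eqref{limvarfor}, and varying $\phi_1\in L^\infty(\Omega)$ and $\phi_2\in C_0^\infty(0,T)$ strips off these factors to give the pointwise-in-$(t,\omega)$ identity \eqref{2svarfor} for all admissible $\phi_3$.

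Finally I would extract the pressure. The orthogonality Lemma \ref{ortog2} furnishes the decomposition $V^\perp=\{\nabla_x p_0+\nabla_y p_1\}$, which lets me represent the linear functional attached to \eqref{2svarfor} through a pair $(P_0,P_1)$ and obtain \eqref{2svarforPP}; restricting to test functions $\phi\in V^2$, for which $\operatorname{div}_y\phi=0$ and $\phi\cdot n_y=0$ on $\partial O$, annihilates the $P_1$ contribution and reduces the identity to \eqref{2svarforP}, which is precisely \eqref{2svarforP-bis}. The main obstacle is the time regularity of $P_0$: Lemma \ref{ortog2} only delivers the spatial structure of the pressure at each fixed $t$, so upgrading $P_0$ to $L^2(\Omega;C([0,T];H^1(D)))$ requires controlling its time dependence uniformly. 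This is delicate exactly because the stochastic forcing prevents a pointwise-in-time differential pressure from being well-defined — the same difficulty flagged in the introduction, which is what forces reliance on the integrated variational formulation rather than the differential form.
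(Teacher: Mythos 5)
Your proposal is correct and follows essentially the same route as the paper: the paper's proof of Theorem \ref{main1} is literally a pointer to Sections \ref{section5} and \ref{section6}, where the uniform estimate \eqref{estU}, the two-scale compactness theorems, the limit Lemmas \ref{lemmalim6}--\ref{lemmalim9}, and the orthogonality Lemma \ref{ortog2} with the restriction to $\phi\in V^2$ are assembled exactly as you describe. The time-regularity of $P_0$ that you rightly flag as the delicate point is handled in the paper only implicitly, via the $C([0,T];\mf{H})$ regularity of $\mf{U}^*$ from Theorem \ref{exU*} together with the closed-range property of Lemma \ref{ortog1}, so your account is faithful to (indeed slightly more candid than) the paper's own presentation.
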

\begin{proof}
See section 5 and section 6.
\end{proof}

Let us define now the 2 by 2 matrices $K_1$ and $K_2$ with the entries $$(K_1)_{ij} = \int_{Y^*} \dfrac{d (w^1_i)_j} {dt} (t) dy = \int_{Y^*} S(t) e_i e_j dy, $$ and $$(K_2)_{ij} = \int_{Y^*} \dfrac{d (w^2_i)_j} {dt} (t) dy.$$ 
Then, we have the  following Darcy's law with memory associated with the weak limit of the $u^\e$:
\begin{theorem}\label{main2}
The process $u^\e$ converges weakly in $L^2(\Omega\times [0,T]\times D)^2$ to 
$\displaystyle\int_{Y^*} u^*(\omega,t,x,y) dy $. Moreover, $\mathbb{P}$-a.s

\begin{equation}\label{u*'}
\begin{split}
\int_{Y^*}u^*(t,x,y) dy  &= \int_{Y^*} w^0(t,x,y)dy + \int_0^t K_1(t-s) f(s,x) ds+ \int_0^t K_1(t-s)g_1(s) d W_1(s)\\
 &-\int_0^t K_1'(t-s) \nabla_x P_0(s,x) ds- K_1(0)\nabla_x P_0(t,x)+ \int_0^t K_2(t-s)g_{21}(s) dW_2(s)\\
 &+ \int_{Y^*}w^3(t,y).
\end{split}
\end{equation}
\end{theorem}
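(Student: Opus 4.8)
The plan is to read off both assertions from results already in hand, so that the proof is essentially a term--by--term integration of the representation formula \eqref{u*}. The weak convergence is immediate: by \eqref{2s2} the zero--extension $\widetilde u^\e$ of $u^\e$ two--scale converges to $u^*$, so Corollary \ref{2sc-w} gives that $\widetilde u^\e \rightharpoonup \int_Y u^*(\omega,t,x,y)\,dy$ weakly in $L^2(\Omega\times[0,T]\times D)^2$. Since $u^*(\omega,t,x,\cdot)\equiv 0$ in $O$ by \eqref{u*1}, the integral over $Y$ coincides with the integral over $Y^*$, which is the first claim.

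For the explicit Darcy law I would take the formula \eqref{u*} of Theorem \ref{thform} and integrate it in $y$ over $Y^*$, handling its seven summands one at a time. The initial and stochastic--boundary summands contribute $\int_{Y^*}w^0(t,x,y)\,dy$ and $\int_{Y^*}w^3(t,y)\,dy$ unchanged. For the body--force summand, Fubini's theorem together with the definition $(K_1)_{ij}=\int_{Y^*}\tfrac{d(w_i^1)_j}{dt}\,dy$ turns $\sum_i\int_0^t\big(\int_{Y^*}\tfrac{dw_i^1}{dt}(t-s,y)\,dy\big)f_i(s,x)\,ds$ into $\int_0^t K_1(t-s)f(s,x)\,ds$. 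The two pressure summands are treated the same way: using $\int_{Y^*}\tfrac{d^2w_i^1}{dt^2}\,dy=K_1'$ and $\int_{Y^*}\tfrac{dw_i^1}{dt}(0,\cdot)\,dy=K_1(0)$ produces exactly the memory term $-\int_0^t K_1'(t-s)\nabla_x P_0(s,x)\,ds$ and the instantaneous term $-K_1(0)\nabla_x P_0(t,x)$ of \eqref{u*'}.

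The only delicate point is the interchange of the spatial integral $\int_{Y^*}\,dy$ with the It\^o integrals in the two remaining summands (those driven by $g_1$ and by $g_{21}$). Here I would invoke the stochastic Fubini theorem to write, for instance, $\int_{Y^*}\int_0^t\tfrac{dw_i^1}{dt}(t-s,y)(g_1)_i(s)\,dW_1(s)\,dy=\int_0^t\big(\int_{Y^*}\tfrac{dw_i^1}{dt}(t-s,y)\,dy\big)(g_1)_i(s)\,dW_1(s)$, and likewise for $g_{21}$ with the kernel $K_2$ in place of $K_1$. To legitimize this I would verify the integrability hypothesis of the stochastic Fubini theorem on $\Omega\times[0,T]\times Y^*$, combining the trace--class bounds \eqref{eqCT} on $g_1$ and $g_{21}$ with the energy estimates for $w_i^1,w_i^2$ furnished by the analytic semigroup representations \eqref{w1-cell}--\eqref{w2}. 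Assembling the seven contributions then gives \eqref{u*'}.

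The step I expect to be the main obstacle is precisely this stochastic Fubini interchange: the time kernels $\tfrac{dw_i^1}{dt}$ and $\tfrac{dw_i^2}{dt}$ inherit only the smoothing of $\mf{S}^Y$, so one must confirm that, after integration in $y$, the resulting scalar kernels $s\mapsto K_1(t-s)$ and $s\mapsto K_2(t-s)$ remain predictable and square--integrable against $d\mf{W}$ uniformly in $\omega$, which is where \eqref{eqCT} is essential. The deterministic terms, by contrast, reduce to routine applications of Fubini's theorem.
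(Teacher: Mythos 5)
Your proposal is correct and follows essentially the same route as the paper, whose proof consists precisely of invoking Corollary \ref{2sc-w} for the weak convergence and then integrating the representation \eqref{u*} over $Y^*$ using the definitions of $K_1$ and $K_2$. The only difference is that you spell out details the paper leaves implicit --- the reduction of $\int_Y$ to $\int_{Y^*}$ via \eqref{u*1}, and the stochastic Fubini interchange justified by the bounds \eqref{eqCT} --- which is a legitimate and welcome elaboration, not a departure.
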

\begin{proof}
The weak convergence is a consequence of the 2-scale convergence and corollary \ref{2sc-w}. Then, it is enough to integrate \eqref{u*} over $Y^*$ and use the previous matrices $K_1$ and $K_2$ in order to get \eqref{u*'}. 
\end{proof}

\subsection{Concluding remarks}
Our main result is given in theorem \ref{thform} and by formula \eqref{main2}. The solution of the homogenized problem is a  Darcy's law with memory with two permeabilities and an extra term $w^3$ related to the  initial stochastic perturbation on the boundary of the porous medium. 
The formula \eqref{u*'} shows that in the absence of external forces, a motion of  the fluid appears. This motion is a consequence of the stochastic perturbation on the porous medium only. A similar result has been proven in \cite{CDE96} for a steady viscous linear fluid flow in porous medium with a  non homogenous slip boundary condition, where the non homogenous term is due to an electrical field with a surface distribution that generates a double layer which allows the fluid to slip. In this paper, we obtain by a rigorous homogenization procedure a modified Darcy's law that takes into account the extra term that allows the fluid to slip.

\section*{Acknowledgements}
Hakima Bessaih would like to thank the Numerical Porous Media (NumPor) SRI Center at KAUST where part of this project was started.

H. B~ was partially supported by the Simons Foundation grant \#283308,   the NSF grants DMS-1416689 and DMS-1418838.


\begin{thebibliography}{99}

\bibitem{A-2s}
G. Allaire, {\it Homogenization and two-scale convergence}, SIAM J. Math. Anal., {\bf 23}(6), 
(1992), pp. 1482--1518.

\bibitem{A92}
G. Allaire, {\it Homogenization of the unsteady Stokes equations in porous media}, Pitman Research Notes in Mathematics Series (1992), pp. 109--109.

\bibitem{A91}
G. Allaire, {\it Homogenization of the Navier-Stokes Equations with a Slip Boundary Condition.} Communications on pure and applied mathematics 44.6 (1991), pp. 605--641.

\bibitem{A95}
G. Allaire and A. Damlamian, {\it Two-scale convergence on periodic surfaces and applications.} Mathematical Modelling of Flow through Porous Media, Bourgeat AP, Carasso C, Luckhaus S, Mikelić A (eds). World Scientific (1995).

\bibitem{maris}
H. Bessaih, Y. Efendiev, F. Maris (2014), {\it Homogenization of the evolution Stokes equation in a perforated domain with a stochastic Fourier boundary condition}, to appear in Networks and Heterogeneous Media.

\bibitem{BLL06}
X. Blanc, C. Le Bris, P-L. Lions, {\it Du discret au continu pour des mod{\`e}les de r{\'e}seaux al{\'e}atoires d'atomes.} Comptes Rendus Mathematique 342.8 (2006), pp. 627--633.

\bibitem{BMW94}
A. Bourgeat, A. A. Mikeli{\'c}, and S. Wright, {\it Stochastic two-scale convergence in the mean and applications.}, reine angew. Math 456.1 (1994), pp. 19--51.

\bibitem{CDE96} 
D. Cioranescu, P. Donato and H. I. Ene, {\it Homogenization of the Stokes problem with nonhomogeneous slip boundary conditions}, Math. Methods in Appl. Sci., {\bf 19} 
(11), (1996), pp. 857--881.

\bibitem{constantin1988} P. Constantin, C. Foias, {\it Navier--Stokes
    Equations}, University of Chicago Press, 1988.




\bibitem {DPZ}
G. Da Prato, J. Zabczyk:
\textit{Stochastic Equations in Infinite Dimensions},
Cambridge University Press, Cambridge (1992).


\bibitem {flandoli1994} F. Flandoli, {\it Dissipativity and invariant measures for stochastic Navier-Stokes equations}, Nonlinear Differential Equations Appl., Vol. 1, No. 4, 1994, pp. 403--423.

\bibitem{Mik91}
A. Mikeli{\'c}, {\it Homogenization of nonstationary Navier-Stokes equations in a domain with a grained boundary.} Annali di Matematica pura ed applicata 158.1 (1991), pp. 167--179.

\bibitem{Mik95}
A. Mikeli{\'c}, {\it Effets inertiels pour un {\'e}coulement stationnaire visqueux incompressible dans un milieu poreux.} Comptes rendus de l'Académie des sciences. Série 1, Math{\'e}matique 320.10 (1995), pp. 1289--1294.


\bibitem{N89}
G. Nguetseng, {\it A general convergence result for a functional related to the theory of homogenization.}, SIAM Journal on Mathematical Analysis 20.3 (1989), pp 608--623.

\bibitem{N03}
G. Nguetseng, {\it Homogenization structures and applications I.}, Zeitschrift fur Analysis und ihre Andwendungen 22.1 (2003), pp. 73--108.

\bibitem{N04}
G. Nguetseng, {\it Homogenization structures and applications II.}, Zeitschrift fur Analysis und ihre Andwendungen 23 (2004), pp. 483--508.

\bibitem{NSW10}
G. Nguetseng, M. Sango, and J. L. Woukeng, {\it Reiterated Ergodic Algebras and Applications}, Communications in Mathematical Physics, {\bf 300} (3), (2010), pp. 835--876.

\bibitem {pazy} A. Pazy: \textit{Semigroups of Linear Operators and
Applications to Partial Differential Equations}, Applied
Mathematical Sciences, {\bf 44}, Springer-Verlag, New York (1983).

\bibitem{RSW12}
P. A. Razafimandimby, M. Sango, and J. L. Woukeng, {\it Homogenization of a stochastic nonlinear reaction-diffusion equation with a large reaction term: The almost periodic framework.}  Journal of Mathematical Analysis and Applications 394.1 (2012), pp. 186--212.

\bibitem {SP80} E. S{\'a}nchez-Palencia,  {\it  Non-homogeneous media and vibration theory.} Vol. 127, 1980.

\bibitem {sohr} H. Sohr, {\it  The Navier-Stokes equations: an elementary functional analytic approach. Vol. 5.}, Basel: Birkh{\"a}user, (2001).\bibitem {temam} R. Temam, {\it  Navier-Stokes equations. Theory and numerical analysis.
Studies in Mathematics and its Applications 2}, North-Holland Publishing Co., Amsterdam-New York (1979).

\bibitem{T80} L. Tartar, {\it Incompressible fluid flow in a porous medium-convergence of the homogenization process.} Appendix of  Non-homogeneous media and vibration theory.


\bibitem {WCD07} W. Wang, D. Cao, and J. Duan, {\it  Effective macroscopic dynamics of stochastic partial differential equations in perforated domains}, SIAM Journal on Mathematical Analysis, {\bf 38} (5), (2007), pp. 1508--1527.

\bibitem {WD07} W. Wang and J. Duan, {\it  Homogenized dynamics of stochastic partial differential equations with dynamical boundary conditions}, Communications in Mathematical Physics, {\bf 275} (1), (2007), pp. 163--186.

\bibitem {WD09} W. Wang and J. Duan, {\it  Reductions and deviations for stochastic partial differential equations under fast dynamical boundary conditions}, Stoch. Anal. and Appl., {\bf 27} (3), (2009), pp. 431--459.

\bibitem{JL13}
J. L. Woukeng, {\it Homogenization in algebras with mean value.}, arXiv:1207.5397v2 , pp. 51.

\bibitem {YD07} D. Yang and J. Duan, {\it  An impact of stochastic dynamic boundary conditions on the evolution of the Cahn-Hilliard system.}, Stoch. Anal. and Appl., {\bf 25}, (2007), pp. 613--639.





\end{thebibliography}
\end{document}